\begin{document}

\newcommand{\bT}{\mathbb{T}}

\newcommand{\s}{\sigma}
\newcommand{\al}{\alpha}
\newcommand{\om}{\omega}
\newcommand{\be}{\beta}
\newcommand{\la}{\lambda}
\newcommand{\vp}{\varphi}

\newcommand{\bo}{\mathbf{0}}
\newcommand{\bone}{\mathbf{1}}

\newcommand{\sse}{\subseteq}
\newcommand{\contains}{\supseteq}
\newcommand{\forces}{\Vdash}

\newcommand{\FIN}{\mathrm{FIN}}
\newcommand{\Fin}{\mathrm{Fin}}
\newcommand{\fin}{\mathrm{fin}}

\newcommand{\ve}{\vee}
\newcommand{\w}{\wedge}
\newcommand{\bv}{\bigvee}
\newcommand{\bw}{\bigwedge}
\newcommand{\bcup}{\bigcup}
\newcommand{\bcap}{\bigcap}

\newcommand{\rgl}{\rangle}
\newcommand{\lgl}{\langle}
\newcommand{\lr}{\langle\ \rangle}
\newcommand{\re}{\restriction}

\newcommand{\bB}{\mathbb{B}}
\newcommand{\bP}{\mathbb{P}}
\newcommand{\bR}{\mathbb{R}}
\newcommand{\bW}{\mathbb{W}}
\newcommand{\bX}{\mathbb{X}}
\newcommand{\bN}{\mathbb{N}}
\newcommand{\bQ}{\mathbb{Q}}
\newcommand{\bS}{\mathbb{S}}
\newcommand{\St}{\tilde{S}}

\newcommand{\sd}{\triangle}
\newcommand{\cl}{\prec}
\newcommand{\cle}{\preccurlyeq}
\newcommand{\cg}{\succ}
\newcommand{\cge}{\succcurlyeq}
\newcommand{\dom}{\mathrm{dom}}
\newcommand{\ran}{\mathrm{ran}}

\newcommand{\lra}{\leftrightarrow}
\newcommand{\ra}{\rightarrow}
\newcommand{\llra}{\longleftrightarrow}
\newcommand{\Lla}{\Longleftarrow}
\newcommand{\Lra}{\Longrightarrow}
\newcommand{\Llra}{\Longleftrightarrow}
\newcommand{\rla}{\leftrightarrow}
\newcommand{\lora}{\longrightarrow}
\newcommand{\E}{\mathrm{E}}
\newcommand{\rank}{\mathrm{rank}}
\newcommand{\lefin}{\le_{\mathrm{fin}}}
\newcommand{\Ext}{\mathrm{Ext}}
\newcommand{\lelex}{\le_{\mathrm{lex}}}
\newcommand{\depth}{\mathrm{depth}}

\newcommand{\Erdos}{Erd{\H{o}}s}
\newcommand{\Pudlak}{Pudl{\'{a}}k}
\newcommand{\Rodl}{R{\"{o}}dl}
\newcommand{\Proml}{Pr{\"{o}}ml}
\newcommand{\Fraisse}{Fra{\"{i}}ss{\'{e}}}
\newcommand{\Sokic}{Soki{\'{c}}}
\newcommand{\Nesetril}{Ne{\v{s}}et{\v{r}}il}

\newtheorem{thm}{Theorem}
\newtheorem{prop}[thm]{Proposition}
\newtheorem{lem}[thm]{Lemma}
\newtheorem{cor}[thm]{Corollary}
\newtheorem{fact}[thm]{Fact}
\newtheorem{facts}[thm]{Facts}
\newtheorem*{Objective1}{Objective 1}
\newtheorem*{Objective2}{Objective 2}
\newtheorem*{Objective3}{Objective 3}
\newtheorem*{Objective4}{Objective 4}
\newtheorem*{thmMT}{Main Theorem}
\newtheorem*{thmMTUT}{Main Theorem for $\vec{\mathcal{U}}$-trees}
\newtheorem*{thmnonumber}{Theorem}
\newtheorem*{mainclaim}{Main Claim}
\newtheorem{claim}{Claim}
\newtheorem*{claim1}{Claim $1$}
\newtheorem*{claim2}{Claim $2$}
\newtheorem*{claim3}{Claim $3$}
\newtheorem*{claim4}{Claim $4$}

\theoremstyle{definition}
\newtheorem{defn}[thm]{Definition}
\newtheorem{example}[thm]{Example}
\newtheorem{conj}[thm]{Conjecture}
\newtheorem{prob}[thm]{Problem}
\newtheorem{examples}[thm]{Examples}
\newtheorem{question}[thm]{Question}
\newtheorem{problem}[thm]{Problem}
\newtheorem{openproblems}[thm]{Open Problems}
\newtheorem{openproblem}[thm]{Open Problem}
\newtheorem{conjecture}[thm]{Conjecture}
\newtheorem*{conjecture2}{Conjecture 2}
\newtheorem*{problem1}{Problem 1}
\newtheorem*{problem2}{Problem 2}
\newtheorem*{problem3}{Problem 3}
\newtheorem*{notation}{Notation}

\theoremstyle{remark}
\newtheorem*{rem}{Remark}
\newtheorem*{rems}{Remarks}
\newtheorem*{ack}{Acknowledgments}
\newtheorem*{note}{Note}
\newtheorem{claimn}{Claim}
\newtheorem{subclaim}{Subclaim}
\newtheorem*{subclaimnn}{Subclaim}
\newtheorem*{subclaim1}{Subclaim (i)}
\newtheorem*{subclaim2}{Subclaim (ii)}
\newtheorem*{subclaim3}{Subclaim (iii)}
\newtheorem*{subclaim4}{Subclaim (iv)}
\newtheorem{case}{Case}
 \newtheorem*{case1}{Case 1}
\newtheorem*{case2}{Case 2}
\newtheorem*{case3}{Case 3}
\newtheorem*{case4}{Case 4}

\newcommand{\noprint}[1]{\relax}
\newenvironment{nd}{\noindent\color{red}ND: }{}
\newenvironment{jsm}{\noindent\color{blue}SM: }{}

\title[Topological Ramsey Spaces Dense in Forcings]{Topological Ramsey Spaces Dense in Forcings}

\keywords{ultrafilter, topological Ramsey space, forcing, Tukey, Rudin-Keisler, Ramsey theory}
\subjclass{03E02, 03E05, 03E35,  03E55, 05D10}

\author{Natasha Dobrinen}
\address{Department of Mathematics\\
  University of Denver \\
  C.M.\ Knudson Hall 302\\
   2290 S.\ York St. \\ Denver, CO \ 80208 U.S.A.}
\email{natasha.dobrinen@du.edu}
\urladdr{\url{http://web.cs.du.edu/~ndobrine}}
\thanks{This work  was partially supported by
 National Science Foundation Grant DMS-1600781}

\begin{abstract}
Topological Ramsey spaces are spaces which support infinite dimensional Ramsey theory  similarly to the Ellentuck space.
Each topological Ramsey space is endowed with a partial ordering which can be modified to a $\sigma$-closed  `almost reduction' relation analogously to the partial ordering of `mod finite'  on $[\om]^{\om}$.
Such forcings add new ultrafilters satisfying weak partition relations and have complete combinatorics.
In  cases where  a forcing turned out to be equivalent to a  topological Ramsey space, the
 strong Ramsey-theoretic techniques have
aided in
 a fine-tuned analysis of the Rudin-Keisler and Tukey structures associated with the forced ultrafilter and in discovering new ultrafilters with complete combinatorics.
 This expository paper
provides
 an overview of this collection of results  and an entry point for those interested  in
using topological Ramsey space techniques to gain finer insight into  ultrafilters satisfying weak partition relations.
\end{abstract}


\maketitle


\section{Overview}\label{sec.overview}

Topological Ramsey spaces are essentially topological spaces which support infinite dimensional Ramsey theory.
The prototype of all topological Ramsey spaces is the Ellentuck space.
This is the space of all infinite subsets of the natural numbers equipped with the Ellentuck topology, a refinement of the usual metric, or equivalently, product  topology.
In  this refined topology, every subset of the Ellentuck space which has the property of Baire is Ramsey.
This extends the usual Ramsey Theorem for pairs or triples, etc., of natural numbers  to infinite dimensions, meaning sets of infinite subsets of the natural numbers, with the additional requirement that the sets be definable in some sense.

Partially ordering the members of the Ellentuck space by almost inclusion yields a forcing which is equivalent to forcing with the Boolean algebra $\mathcal{P}(\om)/\fin$.
This forcing adds a Ramsey ultrafilter.
Ramsey ultrafilters have strong properties:
They are Rudin-Keisler minimal,  Tukey minimal, and have  complete combinatorics over $L(\mathcal{R})$,  in  the presence of large cardinals.

These important features
 are not unique to the Ellentuck space.
Rather, the same or analogous properties hold for a general class of spaces called topological Ramsey spaces.
The class of such spaces were defined by abstracting  the key properties from seminal spaces of Ellentuck, Carlson-Simpson, and Milliken's space of block sequences, and others.
Building on the work of Carlson and Simpson \cite{Carlson/Simpson90}, the first to form an abstract approach to such spaces,
Todorcevic presented a more streamlined set of axioms guaranteeing a space is a topological Ramsey space
 in \cite{TodorcevicBK10}.
This is the setting that we  work in.

Topological Ramsey spaces come equipped with a partial ordering.
This partial ordering can be modified to
a naturally defined $\sigma$-closed partial ordering of {\em almost reduction}, similarly to how the partial ordering of inclusion modulo finite  is defined from the partial ordering of inclusion.
This almost reduction ordering
was defined for abstract topological Ramsey spaces by Mijares in \cite{Mijares07}.
He showed that
forcing with a topological Ramsey space partially ordered by almost reduction  adds a new ultrafilter on the countable base set of first approximations.
Such ultrafilters inherit some weak partition relations from the fact that they were forced by a topological Ramsey space;
they behave like weak versions of a Ramsey ultrafilter.

When an ultrafilter is forced by a topological Ramsey space,
 one immediately has strong techniques at one's disposal.
The Abstract Ellentuck Theorem  serves to both streamline proofs  and helps to clarify what exactly is causing the particular properties of the forced ultrafilter.
The structure of the topological Ramsey space aids in several factors of the analysis of the behavior of the ultrafilter.
The following  are made possible by knowing that a  given forcing is equivalent to forcing with some topological Ramsey space.

\begin{enumerate}
\item
A simpler reading of the Ramsey degrees of the forced ultrafilter.
\item
Complete Combinatorics.
\item
Exact Tukey and Rudin-Keisler structures, as well as the structure of the Rudin-Keisler classes inside the Tukey classes.
\item
New canonical equivalence relations on fronts - extensions of the \Erdos-Rado and \Pudlak-\Rodl\ Theorems.
\item
Streamlines and simplifies proofs, and reveals the underlying structure responsible for the properties of the ultrafilters.
\end{enumerate}

This article focuses on studies
of ultrafilters satisfying  weak partition relations and which can be forced by some $\sigma$-closed partial orderings
in  \cite{Dobrinen/Todorcevic14}, \cite{Dobrinen/Todorcevic15}, \cite{Dobrinen/Mijares/Trujillo14}, \cite{DobrinenJSL15}, \cite{DobrinenJML16}, and other work.
These works concentrate on weakly Ramsey ultrafilters and a family of ultrafilters with increasingly weak partition properties due to Laflamme in \cite{Laflamme89};
p-points forced {\em $n$-square forcing} by Blass in \cite{Blass73} which have Rudin-Keisler structure below them a diamond shape;
the $k$-arrow, not $k+1$-arrow ultrafilters of Baumgartner and Taylor in \cite{Baumgartner/Taylor78}, as well as the arrow ultrafilters;
new classes of p-points with weak partition relations;
non-p-points forced by $\mathcal{P}(\om\times\om)/(\Fin\otimes\Fin)$ and the natural hierarchy of  forcings of increasing complexity, $\mathcal{P}(\om^\al)/\Fin^{\otimes\al}$.

It turned out that the original forcings adding these ultrafilters   actually contain dense subsets which form
 topological Ramsey spaces.
The Ramsey structure of these spaces aided greatly in the analysis of the properties of the forced ultrafilters.
In the process  some new classes of ultrafilters with weak partition properties were also produced.
Though there are many other classes of ultrafilters  not yet studied in this context, the fact that in all these cases  dense subsets of the forcings forming  topological Ramsey spaces were found signifies  a strong connection between ultrafilters satisfying some partition relations and topological Ramsey spaces.
Thus, we make the following conjecture.

\begin{conjecture}\label{conj}
Every ultrafilter which satisfies some  partition relation and is forced by some $\sigma$-closed forcing  is actually forced by some topological Ramsey space.
\end{conjecture}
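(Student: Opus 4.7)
The plan is to start from a $\s$-closed forcing $\bP$ together with a $\bP$-name $\dot{\mathcal{U}}$ for an ultrafilter on $\om$ satisfying some nontrivial partition relation, and to construct a topological Ramsey space $(\bR,\le,r)$ whose almost-reduction forcing densely embeds into $\bP$. The guiding heuristic, informed by every case catalogued above, is that the partition relation itself already encodes enough combinatorial data to reveal the space; the task is to extract it abstractly rather than on a case-by-case basis.

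First I would prove or invoke an abstract \Pudlak--\Rodl\ theorem for $\mathcal{U}$: the hypothesized partition relation should induce, for each $n$, a finite list of canonical equivalence relations on $n$-dimensional fronts over members of $\mathcal{U}$. These canonical relations partition each front into finitely many ``shapes,'' and the shapes are the natural candidates for the finite approximations $r_n(X)$ of a prospective $X\in\bR$. Coherence of shapes across $n$ yields a tree $T$ of approximations, and one lets $\bR$ consist of those $\om$-sequences of shapes that can be simultaneously realized by an element of the generic filter built from $\dot{\mathcal{U}}$.

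Next I would verify Todorcevic's axioms \textbf{A.1}--\textbf{A.4} for $(\bR,\le,r)$. Axioms \textbf{A.1} (sequencing) and \textbf{A.2} (finitization) are essentially built into the definition of the approximations. Axiom \textbf{A.3} (amalgamation) should follow from $\s$-closure of $\bP$: given $X\in\bR$ and a finite shape $a$ extending $r_n(X)$ in $T$, $\s$-closure supplies a fusion that simultaneously refines $X$ and realizes $a$. The main work is \textbf{A.4}, the Abstract Pigeonhole Principle: any finite coloring of $(n{+}1)$-shapes extending a fixed $n$-shape $a$ must stabilize below some infinite approximation extending $a$. This is exactly the content of the partition relation on $\mathcal{U}$, but converting it into the axiomatic form, inducting on $n$, and handling arrow-type relations where one must allow a bounded number of colors will be delicate.

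The main obstacle I expect is \emph{uniformity}. A single construction must cover Laflamme's hierarchy, the $k$-arrow ultrafilters of Baumgartner and Taylor, and the non-p-points forced by $\mathcal{P}(\om^\al)/\Fin^{\otimes\al}$, among others, and the appropriate notion of ``shape'' is manifestly different in each case. A plausible route is to first classify $\s$-closed partition-forcings by the type of their canonical equivalence relations and then build a bespoke $\bR$ within each class, merging the cases at the end. The real danger, however, is that a $\s$-closed forcing could produce an ultrafilter with a partition property by some combinatorial accident, with no underlying Ramsey structure; ruling this out, or showing that partition properties themselves force such structure to exist, appears to require genuinely new ideas, and I expect this to be where Conjecture~\ref{conj} stands or falls.
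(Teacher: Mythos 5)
This statement is Conjecture~\ref{conj}, and the paper does not prove it: it is stated explicitly as an open conjecture and a ``motivating thesis,'' with the author noting only that ``so far there is no evidence to the contrary.'' So there is no proof in the paper to compare against, and your proposal does not close the gap either --- by your own admission in the final paragraph, the central difficulty (that a $\sigma$-closed forcing might produce a partition property by combinatorial accident, with no underlying Ramsey structure) ``appears to require genuinely new ideas.'' A proof that ends by conceding the main obstacle is unresolved is a research program, not a proof.

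Beyond that concession, there is a structural problem with the strategy itself. Your first step invokes ``an abstract \Pudlak--\Rodl\ theorem for $\mathcal{U}$'' derived from the hypothesized partition relation, and uses the resulting canonical equivalence relations to define the finite approximations of the prospective space $\mathcal{R}$. But in every case treated in the paper (the spaces $\mathcal{R}_1$, $\mathcal{R}_\al$, $\mathcal{H}^k$, $\mathcal{A}_k$, $\mathcal{E}_k$), the canonization theorems are proved \emph{after} the topological Ramsey space has been constructed, using the Abstract Ellentuck or Nash-Williams theorem for that specific space; the canonical equivalence relations are a consequence of the space's structure, not a precursor to it. Using canonization to build the space whose Ramsey theorem is needed to prove the canonization is circular unless you can establish the canonization directly from the partition relation on $\mathcal{U}$ alone, and no such general theorem is known --- a bare relation such as $\mathcal{U}\ra(\mathcal{U})^2_{l,k}$ constrains colorings with finitely many colors on pairs, which is far weaker than controlling equivalence relations (colorings with infinitely many colors) on fronts of arbitrary countable rank. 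Your appeal to $\sigma$-closure for \textbf{A.3} is also misdirected: \textbf{A.3} is a statement about the ordering $\le$ on $\mathcal{R}$ itself, verified combinatorially for each space, whereas $\sigma$-closure of $\bP$ only gives fusion along the weaker ordering $\le^*$. The paper's actual evidence for the conjecture is entirely case-by-case: for each known forcing one exhibits a concrete dense subset, defines the approximations by hand, and verifies \textbf{A.1}--\textbf{A.4} directly (as done for $\mathcal{R}_1$ in Section~\ref{section.R_1}); no uniform extraction procedure of the kind you describe is available.
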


While this is a strong conjecture, so far there is no evidence to the contrary, and it is a  motivating thesis for using topological Ramsey spaces to find a unifying framework for  ultrafilters satisfying some weak partition properties.

Finally, a note about attributions: 
We attribute work as stated in the papers quoted.


\section{A few basic definitions}

Most definitions used will appear as needed throughout this article.
In this section we define a few notions needed throughout.

\begin{defn}
A {\em filter} $\mathcal{F}$ on a countable base set $B$ is  a collection of subsets of $B$ which is closed under finite intersection and closed under superset.
An {\em ultrafilter} $\mathcal{U}$ on a countable base set $B$ is
a filter such that each subset of $B$ or its complement is in $\mathcal{U}$.
\end{defn}

We hold to the convention that all ultrafilters are proper ultrafilters;
thus $\emptyset$ is not a member of any ultrafilter.

\begin{defn}\label{.defnppt}
An ultrafilter  $\mathcal{U}$ on a countable base set $B$ is
\begin{enumerate}
\item
  {\em Ramsey} if for each $k,l\ge 1$ and each coloring $c:[B]^k\ra l$,
there is a member $U\in\mathcal{U}$ such that
$c\re [U]^k$ is constant.
\item
{\em selective}
if for each function $f:\om\ra\om$, there is a member $X\in\mathcal{U}$ such that  $f$ is either constant or one-to-one on $U$.
\item
{\em Mathias-selective}
if for each collection $\{U_s:s\in [\om]^{<\om}\}$ of members in $\mathcal{U}$,
there is an $X\in \mathcal{U}$ such that for each $s\in[\om]^{<\om}$ for which $\max(s)\in X$,
 $X\setminus (\max(s)+1)\sse U_s$.
\end{enumerate}
\end{defn}

The three  definitions above are equivalent.
Booth proved in \cite{Booth70} that (1) and (2) are equivalent, and
Mathias proved in \cite{Mathias77} that (1) and (3) are equivalent.

\begin{defn}
An ultrafilter $\mathcal{U}$ on a countable base set $B$ is a
{\em p-point}
if for each sequence $U_n$, $n<\om$, of members of $\mathcal{U}$, there is an $X$ in $\mathcal{U}$ such that for each $n<\om$,
$X\sse^* U_n$.
Equivalently, $\mathcal{U}$ is a {\em p-point}
if for each function $f:\om\ra\om$ there is a member $X$ in $\mathcal{U}$ such that $f$ is either constant or finite-to-one on $X$.

$\mathcal{U}$ is {\em rapid} if
for each strictly increasing function $f:\om\ra\om$,
there is a member $X\in\mathcal{U}$ such that for each $n<\om$,
$|X\cap f(n)|<n$.
\end{defn}

Using the function definition of p-point, it is clear that a selective implies p-point, which in turn implies rapid.

A different hierarchy of  ultrafilters may  be formed by weakening the Ramsey requirement to only require some bound on the number of colors appearing, rather than requiring homogeneity on a member in the ultrafilter.
The first of this type of weakening is a {\em weakly Ramsey} ultrafilter, which is an ultrafilter $\mathcal{U}$ such that for each  $l\ge 3$ and coloring $f:[\om]^2\ra l$,
there is a member $X\in\mathcal{U}$ such that
the restriction of $f$ to $[X]^2$ takes no more than $2$ colors.
The usual notation to denote this statement is
\begin{equation}
\mathcal{U}\ra(\mathcal{U})^2_{l,2}.
\end{equation}
This idea can be extended to any $k\ge 2$,
defining a {\em $k$-Ramsey} ultrafilter to be one such that for each  $l>k$ and $f:[\om]^2\ra l$,
there is a member $X\in\mathcal{U}$ such that
the restriction of $f$ to $[X]^2$ takes no more than $k$ colors.
This is denoted
\begin{equation}
\mathcal{U}\ra(\mathcal{U})^2_{l,k}.
\end{equation}
As we shall review later, Laflamme forced
a hierarchy of ultrafilters  $\mathcal{U}_k$, $k\ge 1$, such that
$\mathcal{U}_k$ is $k+1$-Ramsey but not $k$-Ramsey
\cite{Laflamme89}.

Next, we present one of the most useful ways of constructing new ultrafilters from old ones.

\begin{defn}[Fubini Product]\label{defn.Fub}
Let $\mathcal{U}$ and $\mathcal{V}_n$, $n<\om$, be ultrafilters on $\om$.
The {\em Fubini product} of $\mathcal{U}$ and $(\mathcal{V}_n)_{n<\om}$ is the ultrafilter $\lim_{n\ra\mathcal{U}}\mathcal{V}_n$
on base set $\om\times\om$ such that a set $A\sse\om\times\om$ is in  $\lim_{n\ra\mathcal{U}}\mathcal{V}_n$
if and only if
\begin{equation}
\{n<\om:\{j<\om:(n,j)\in A\}\in\mathcal{V}_n\}\in\mathcal{U}.
\end{equation}
\end{defn}
In other words, a subset $A\sse\om\times\om$ is in
$\lim_{n\ra\mathcal{U}}\mathcal{V}_n$ if and only if for $\mathcal{U}$ many $n$, the $n$-th fiber of $A$ is a member of $\mathcal{V}_n$.
If all $\mathcal{V}_n$ are equal to the same ultrafilter $\mathcal{V}$,
then the Fubini product is written as $\mathcal{U}\cdot\mathcal{V}$.

The Fubini product construction can be continued recursively any countable ordinal many times.
In particular, for any countable ordinal $\al$,
the $\al$-th Fubini iterate of $\mathcal{U}$ is denoted by  $\mathcal{U}^{\al}$.
The importance of this fact will be seen in later sections.


\section{The Prototype Example:  Ramsey Ultrafilters and the Ellentuck space}\label{sec.RamseyUF}

The connections and interactions between  Ramsey ultrafilters and the Ellentuck space provide the fundamental example  of the phenomena we are illustrating in this article.
Recall that  a {\em Ramsey ultrafilter} is an ultrafilter $\mathcal{U}$ on a countable base set,  usually taken to be $\om$, which contains witnesses of Ramsey's Theorem:
For each $k,l\ge 1$ and each coloring $c:[\om]^k\ra l$,
there is an $X\in\mathcal{U}$ such that $c\re [X]^k$ is constant.
Ramsey ultrafilters are forced by $\mathcal{P}(\om)/\Fin$, which is a $\sigma$-closed forcing.
This is forcing equivalent to the partial ordering $([\om]^{\om},\sse^*)$,
where for $X,Y\in[\om]^{\om}$, $Y\sse^* X$ if and only if $Y\setminus X$ is finite.
This section reviews the complete combinatorics of Ramsey ultrafilters,  the exact Rudin-Keisler and Tukey structures connected with Ramsey ultrafilters,
 and the roles played by the Ellentuck space, either implicitly or explicitly in these results,
and the crucial theorems of     Nash-Williams, Ellentuck, and \Pudlak-\Rodl.
This
provides  the groundwork from which to understand the more general results.

\subsection{Complete combinatorics of Ramsey ultrafilters}\label{subsection.completecombRamsey}

Saying that an ultrafilter has {\em  complete combinatorics}
means  that   there is some  forcing and some well-defined  combinatorial property such that any ultrafilter satisfying that property is generic for the forcing over some well-defined inner model.
There are two main formulations of complete combinatorics for Ramsey ultrafilters.
The first has its inception  in  work of Mathias in
  \cite{Mathias77} and was formulated by Blass in \cite{Blass88}:
Any Ramsey ultrafilter in the model $V[G]$ obtained by L\'{e}vy collapsing a Mahlo cardinal to $\aleph_1$ is $\mathcal{P}(\om)/\Fin$-generic over HOD$(\mathbb{R})^{V[G]}$.
Thus, we say that Ramsey ultrafilters have {\em complete combinatorics} over HOD$(\mathbb{R})^{V[G]}$, where
$V[G]$ is obtained by L\'{e}vy collapsing a Mahlo cardinal to $\aleph_1$.
This form of complete combinatorics does not take place in the original model $V$, but only presupposes the existence of a Mahlo cardinal.

The second formulation  of complete combinatorics is due to Todorcevic (see Theorem 4.4 in \cite{Farah98}) building on  work of Shelah and  Woodin  \cite{Shelah/Woodin90}.
It presupposes the existence of large cardinals stronger than a Mahlo but has the advantage that the statement is with respect to the canonical inner model $L(\mathbb{R})$ inside $V$ rather than in a forcing extension of $V$ collapsing a Mahlo cardinal.
If $V$ has a supercompact cardinal (or somewhat less),
then any Ramsey ultrafilter in $V$ is generic for the forcing $\mathcal{P}(\om)/\Fin$ over the Solovay model $L(\mathbb{R})$ inside $V$.
Thus, we say that in the presence of certain large cardinals, each Ramsey ultrafilter in $V$ has {\em complete combinatorics} over $L(\mathbb{R})$.
This second formulation lends itself to natural generalizations to forcing with abstract topological Ramsey spaces, as we shall review later.

\subsection{Rudin-Keisler order}\label{subsection.RK}

The well-studied Rudin-Keisler order on ultrafilters is a quasi-ordering in which `stronger' ultrafilters are smaller.
Given ultrafilters $\mathcal{U}$ and $\mathcal{V}$,
we say that $\mathcal{V}$ is {\em Rudin-Keisler reducible to $\mathcal{U}$}
if there is a function $f:\om\ra\om$ such that
\begin{equation}
\mathcal{V}=f(\mathcal{U}):=\{X\sse\om:f^{-1}(X)\in\mathcal{U}\}.
\end{equation}

Two ultrafilters are Rudin-Keisler (RK) equivalent if and only if each is RK-reducible to the other.
In this case, we write $\mathcal{U}\equiv_{RK}\mathcal{V}$.
It turns out  that  two ultrafilters are RK equivalent if and only if there is a bijection between their bases  taking one ultrafilter to the other (see  \cite{BlassThesis} or \cite{Booth70}).
Thus, we shall use the terminology {\em RK equivalent} and {\em isomorphic} interchangeably.
The collection of all ultrafilters RK equivalent to a given ultrafilter $\mathcal{U}$ is called the {\em RK class} or {\em isomorphism class} of $\mathcal{U}$.

Recall that an ultrafilter $\mathcal{U}$ is {\em selective}
if for each function $f:\om\ra\om$,
there is a member $X\in\mathcal{U}$ such that $f$ is either one-to-one or constant on $X$.
If $f$ is constant on some member of $\mathcal{U}$,
then  the ultrafilter $f(\mathcal{U})$ is principal.
If $f$ is one-to-one on some member of $\mathcal{U}$, then $f(\mathcal{U})$ is isomorphic to $\mathcal{U}$.
Hence, selective  ultrafilters are Rudin-Keisler minimal among nonprincipal ultrafilters.

\subsection{Tukey order on ultrafilters}\label{subsection.Tukey}
The Tukey order between partial orderings
was defined by Tukey in order to study convergence in Moore-Smith topology.
In recent decades it has found deep applications in areas where isomorphism is too fine a notion to reveal  useful information.
In the setting of ultrafilters partially ordered by reverse inclusion,
the Tukey order is a coarsening of the Rudin-Keisler order and provides information about the cofinal types of  ultrafilters.

Given ultrafilters $\mathcal{U}$ and $\mathcal{V}$,
we say that $\mathcal{V}$ is {\em Tukey reducible to $\mathcal{U}$}
if there is a function $f:\mathcal{U}\ra\mathcal{V}$ such that for each filter base $\mathcal{B}$ for $\mathcal{U}$,
$f''\mathcal{B}$ is a filter base for $\mathcal{V}$.
Such a map is called a {\em cofinal} map or a {\em convergent} map.
Equivalently, $\mathcal{V}$ is Tukey reducible to $\mathcal{U}$ if there is a function $g:\mathcal{V}\ra\mathcal{U}$ such that for each unbounded subset $\mathcal{X}\sse\mathcal{V}$,
the image $g''\mathcal{X}$ is unbounded in $\mathcal{U}$.
It is worth noting that whenever $\mathcal{V}\le_T\mathcal{U}$,
then there is a {\em monotone} cofinal map witnessing this; that is,
a map  $f:\mathcal{U}\ra\mathcal{V}$ such  that whenever $X\contains Y$ are members of $\mathcal{U}$,
then $f(X)\contains f(Y)$.

If both $\mathcal{U}$ and $\mathcal{V}$ are Tukey reducible to each other, then we say that they are {\em Tukey equivalent}.
For directed partial orderings, Tukey equivalence is the same as cofinal equivalence:
There is some other directed partial ordering into which they both embed as cofinal subsets.
Since for any ultrafilter $\mathcal{U}$, the partial ordering   $(\mathcal{U},\contains)$ is directed,
 Tukey equivalence between ultrafilters is the same as cofinal equivalence.
The collection of all ultrafilters Tukey equivalent to $\mathcal{U}$ is called the {\em Tukey type} or {\em cofinal type} of $\mathcal{U}$.

Each Rudin-Keisler map induces a monotone cofinal map.
If $h:\om\ra\om$ and $\mathcal{V}=h(\mathcal{U})$,
then the map $f:\mathcal{U}\ra\mathcal{V}$ given by $f(X)=\{h(n):n\in X\}$, for $X\in\mathcal{U}$, is a cofinal map witnessing that $\mathcal{V}\le_T\mathcal{U}$.
Thus, Tukey types form a coarsening of the RK classes of ultrafilters.

Todorcevic proved in \cite{Raghavan/Todorcevic12} that, analogously to the Rudin-Keisler order,
Ramsey ultrafilters are minimal among nonprincipal ultrafilters  in the Tukey ordering.
His  proof uses a theorem that p-points carry continuous cofinal maps \ and a theorem of \Pudlak\ and \Rodl\ regarding canonical equivalence relations on barriers of the Ellentuck space.
These will be discussed below, after which we will return to  an outline of the proof of this theorem.

\subsection{Continuous cofinal maps from p-points}\label{subsection.ctscof}

It was proved in \cite{Dobrinen/Todorcevic11} that every p-point carries continuous cofinal maps.
The members of a given ultrafilter $\mathcal{U}$ are subsets of $\om$.
Using the natural correspondence between a subset of $\om$ and its characteristic function as an infinite sequence of $0$'s and $1$'s,
each ultrafilter on $\om$ may be seen as a subspace of the Cantor space $2^{\om}$, endowed with the product topology.
A continuous map from $2^{\om}$ into itself is a function such that the preimage of any open set is open.
This amounts to continuous functions having initial segments of their images being decided by finite amounts of information.
Thus,  $f:2^{\om}\ra 2^{\om}$ is continuous if and only if there is a finitary function $\hat{f}:2^{<\om}\ra 2^{<\om}$ such that
$\hat{f}$ preserves end-extensions and reproduces $f$.
Precisely,
for $s\sqsubseteq t$, $\hat{f}(s)\sqsubseteq \hat{f}(t)$,
and for each $X\in 2^{\om}$,
$f(X)=\bigcup_{n<\om}\hat{f}(X\re n)$.
(For finite sets of natural numbers,
the notation $s\sqsubseteq t$ is used to denote  that $s$ is an initial segment of $t$, meaning that $s=\{n\in t:n\le \max(s)\}$.
$s\sqsubset t$ denotes that $s$ is a proper initial segment of $t$: $s$ is an initial segment of $t$ and $s$ is not equal to $t$.)

Recall that an ultrafilter $\mathcal{U}$ is a {\em  p-point} if whenever $X_n$, $n<\om$, are members of $\mathcal{U}$ such that each $X_{n+1}\sse^* X_n$,
then there is a member $U\in\mathcal{U}$ such that for each $n<\om$, $U\sse^* X_n$.
Such a set $U$ is called a {\em pseudointersection} of the sequence of $\{U_n:n<\om\}$.

\begin{thm}[Dobrinen/Todorcevic, \cite{Dobrinen/Todorcevic11}]\label{thm.20}
For each p-point $\mathcal{U}$,
if $f:\mathcal{U}\ra\mathcal{V}$ is a monotone cofinal map,
then there is a member $X\in\mathcal{U}$ such that $f$ is continuous when restricted to  the set $\mathcal{U}\re X:=\{U\in\mathcal{U}:U\sse X\}$.
Moreover,
there is a  monotone continuous function $f^*$ from $\mathcal{P}(\om)$ into $\mathcal{P}(\om)$
such that $f^*\re(\mathcal{U}\re\tilde{X})
= f\re(\mathcal{U}\re\tilde{X})$.
\end{thm}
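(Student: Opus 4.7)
The plan is to split the theorem into two parts: first, produce $\tilde X\in\mathcal{U}$ so that the restriction $f\re(\mathcal{U}\re\tilde X)$ is continuous, meaning that for each $k$ the value $f(U)\cap k$ depends only on a fixed finite initial segment of $U$; then assemble from this finite data a monotone end-extension-preserving map $\hat f:2^{<\om}\to 2^{<\om}$ whose induced continuous map $f^*(A):=\bcup_n\hat f(A\re n)$ is a monotone continuous extension of $f\re(\mathcal{U}\re\tilde X)$ to all of $\mathcal{P}(\om)$.

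The key technical ingredient is a decidability dichotomy driven by monotonicity. Fix an increasing sequence $m_0<m_1<\cdots$. For each $j<\om$ and each $s\sse m_j$, exactly one of the following holds: either \emph{(I)} for every $W\in\mathcal{U}$ with $W\cap m_j=\emptyset$ there exists $U\in\mathcal{U}$ with $U\cap m_j=s$, $U\setminus m_j\sse W$, and $j\in f(U)$; or \emph{(II)} there exists $W(s,j)\in\mathcal{U}$ with $W(s,j)\cap m_j=\emptyset$ such that every $U\in\mathcal{U}$ with $U\cap m_j=s$ and $U\setminus m_j\sse W(s,j)$ satisfies $j\notin f(U)$. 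The crucial design feature is that Case (I) is monotone in $s$: given $s\sse s'\sse m_j$ and any admissible $W$, apply (I) at $s$ with this $W$ to obtain $U$; then $U':=s'\cup(U\setminus m_j)\in\mathcal{U}$ satisfies $U\sse U'$, and by monotonicity of $f$ one has $j\in f(U')$, witnessing (I) at $s'$.

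A p-point diagonalization then produces $\tilde X\in\mathcal{U}$ together with the sequence $(m_j)$ so that $\tilde X\cap[m_j,\om)\sse W(s,j)$ for every Case-(II) pair $(s,j)$. With this $\tilde X$, for any $U\in\mathcal{U}\re\tilde X$, any $j<\om$, and $s:=U\cap m_j$: in Case (I), applying (I) with $W:=U\setminus m_j\in\mathcal{U}$ yields $U^*\in\mathcal{U}$ with $U^*\sse U$ and $j\in f(U^*)$, so $j\in f(U)$ by monotonicity; in Case (II), $U$ itself satisfies the Case (II) hypothesis since $U\setminus m_j\sse\tilde X\cap[m_j,\om)\sse W(s,j)$, so $j\notin f(U)$ directly. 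Thus $f(U)\cap k$ is determined by $U\cap m_k$ for every $U\in\mathcal{U}\re\tilde X$, giving continuity.

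The principal obstacle is the interleaving of the diagonalization with the choice of thresholds $m_j$: the Case-(II) witnesses $W(s,j)$ depend on $m_j$, yet $m_j$ must be chosen to absorb the finite defects $\tilde X\setminus W(s,j)$. This is resolved by a stagewise fusion in which at stage $j$ only finitely many pairs $(s,j')$ with $j'\le j$ and $s\sse m_{j'}$ are in play; one chooses $m_j$ and $\tilde X_j$ together so that $\tilde X_j\sse W(s,j',m_{j'})$ for all such pairs, and then takes a pseudointersection $\tilde X$ of the $\tilde X_j$'s using the p-point property. For the extension, define $\hat f(\sigma)$ of length $k(|\sigma|):=\max\{j:m_j\le|\sigma|\}$ by stipulating that $j\in\hat f(\sigma)$ iff $(\sigma\cap m_j,j)$ is in Case (I). Monotonicity of Case (I) in $s$ yields monotonicity of $\hat f$; the fact that the cases depend only on $\sigma\cap m_j$ yields end-extension-preservation. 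The map $f^*(A):=\bcup_n\hat f(A\re n)$ is then the required monotone continuous extension with $f^*\re(\mathcal{U}\re\tilde X)=f\re(\mathcal{U}\re\tilde X)$.
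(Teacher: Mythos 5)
Your overall architecture is the right one, and it is essentially the argument of the cited source \cite{Dobrinen/Todorcevic11} (note that the present paper states Theorem \ref{thm.20} without proof, importing it from there): a dichotomy for each pair consisting of a finite set $s$ and a coordinate $j$, driven by the fact that $\{W\in\mathcal{U} : j\in f(s\cup(W\setminus m_j))\}$ is upward closed by monotonicity of $f$; a p-point diagonalization into the Case-(II) witnesses; and the assembly of $\hat f$ and $f^*$ from the resulting finite data. Your verification of the dichotomy, of the monotonicity of Case (I) in $s$ (hence of $\hat f$), and of the two directions of the continuity claim on $\mathcal{U}\re\tilde X$ are all correct as far as they go.

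The one step that does not close as written is the diagonalization itself, and you have correctly located it as the principal obstacle. The Case-(II) half of your continuity argument requires the \emph{genuine} containment $U\setminus m_j\sse W(s,j)$ for every $U\in\mathcal{U}\re\tilde X$ with $U\cap m_j=s$, i.e.\ $\tilde X\cap[m_j,\om)\sse W(s,j)$; monotonicity cannot absorb an error here, since knowing $j\notin f(U')$ for a subset $U'\sse U$ tells you nothing about $f(U)\supseteq f(U')$. But a p-point pseudointersection of the sets $\tilde X_j$ only yields $\tilde X\sse^*\tilde X_j$, with the finite defect $\tilde X\setminus\tilde X_j$ unknown until after $\tilde X$ has been produced, whereas $m_j$ was fixed at stage $j$, before $\tilde X$ existed. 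Enlarging the threshold to some $n_j\ge m_j$ that absorbs the defect does not help directly, because the dichotomy was applied to the pair $(U\cap m_j,j)$ rather than $(U\cap n_j,j)$, and the pairs $(s,j)$ with $s\sse n_j$ were never assigned witnesses inside $\tilde X_j$; so the circularity between thresholds and witnesses is not yet broken. This is repairable --- it is precisely the bookkeeping around which the proof in \cite{Dobrinen/Todorcevic11} is organized --- but closing it requires an explicit mechanism relating the defect bounds of the pseudointersection back to the pairs for which the dichotomy was invoked, not just the assertion that a stagewise fusion followed by a pseudointersection suffices. You should either carry out that bookkeeping in full or quote the construction from the source.
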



\subsection{The Ellentuck Space}

The Ellentuck space has as its points the infinite subsets of the natural numbers, $[\om]^{\om}$.
For  $a\in[\om]^{<\om}$ and $X\in[\om]^{\om}$,
$a\sqsubset X$ denotes that $a=\{n\in X:n\le\max(a)\}$.
The basic open sets inducing the  Ellentuck topology  are of the following form:
Given a finite set $a\in [\om]^{<\om}$ and an infinite set $X\in[\om]^{\om}$,
define
\begin{equation}
[a,X]=\{Y\in[\om]^{\om}:a\sqsubset Y\sse X\}.
\end{equation}
The {\em Ellentuck topology} is the topology on the space $[\om]^{\om}$ induced by all basic open sets of the form $[a,X]$, for $a\in[\om]^{<\om}$ and $X\in[\om]^{\om}$.
Notice that this topology refines the usual metric or equivalently product topology on $[\om]^{\om}$.

It is this topology which is the correct one in which to understand infinite dimensional Ramsey theory.
Infinite dimensional Ramsey theory is the extension of Ramsey theory from finite dimensions, that is, colorings of $[\om]^k$ where $k$ is some positive integer, to colorings of $[\om]^{\om}$.
Assuming the Axiom of Choice,  the following statement is false: ``Given a function $f:[\om]^{\om}\ra 2$, there is an $M\in[\om]^{\om}$ such that $f$ is constant on $[M]^{\om}$."
However,
if the coloring is sufficiently definable, then Ramsey theorems hold.
This is the content of the progression from the Nash-Williams Theorem  \cite{NashWilliams65} through the work of  Galvin-Prikry \cite{Galvin/Prikry73}, Mathias \cite{Mathias77}, Silver \cite{Silver70} and Louveau \cite{Louveau74}
up to the theorem of Ellentuck showing that the Ellentuck topology is the correct topology in which to obtain optimal infinite dimensional Ramsey theory.

\begin{thm}[Ellentuck, \cite{Ellentuck74}]\label{thm.Ellentuck}
If $\mathcal{X}\sse[\om]^{\om}$ has the property of Baire in the Ellentuck topology,
then for any basic open set $[a,X]$,
there is a member $Y\in [a,X]$ such that
either $[a,Y]\sse\mathcal{X}$ or else
$[a,Y]\cap\mathcal{X}=\emptyset$.
\end{thm}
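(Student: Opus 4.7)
The plan is to prove Ellentuck's theorem via the classical dual formulation using two dual notions. Call $\mathcal{X}\sse[\om]^\om$ \emph{completely Ramsey} if it satisfies the conclusion of the theorem for every basic open $[a,X]$, and call $\mathcal{N}\sse[\om]^\om$ \emph{Ramsey null} if for every $[a,X]$ there is $Y\in[a,X]$ with $[a,Y]\cap\mathcal{N}=\emptyset$. The strategy has four steps: (i) the completely Ramsey sets form an algebra and the Ramsey null sets form a $\sigma$-ideal; (ii) every Ellentuck-open set is completely Ramsey; (iii) every Ellentuck-meager set is Ramsey null; (iv) since a set with the Baire property in the Ellentuck topology is the symmetric difference of an Ellentuck-open set and an Ellentuck-meager set, (ii)--(iii) combine to give the conclusion.

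Step (ii) is the combinatorial heart. Fix $\mathcal{X}$ Ellentuck-open and $[a,X]$. Say $Z$ \emph{accepts} $a$ if $[a,Z]\sse\mathcal{X}$, $Z$ \emph{rejects} $a$ if no $Y\in[a,Z]$ accepts $a$, and $Z$ \emph{decides} $a$ if it either accepts or rejects $a$. A first fusion lemma produces $Y\in[a,X]$ deciding every finite $b$ with $a\sqsubseteq b\sqsubset Y$: enumerate candidate extensions of $a$ in order of their largest element, at each stage thin the working set so the current $b$ is decided, and diagonalize through $\om$ stages. If this $Y$ accepts $a$ we are done. Otherwise $Y$ rejects $a$, and a pigeonhole observation shows that only finitely many $n\in Y$ past $\max(a)$ can have $Y$ accept $a\cup\{n\}$, for otherwise $a$ together with those infinitely many $n$ would itself accept $a$. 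A second fusion removes these accepting elements level-by-level, producing $Y'\in[a,Y]$ that rejects every finite $b$ with $a\sqsubseteq b\sqsubset Y'$. Openness of $\mathcal{X}$ now yields $[a,Y']\cap\mathcal{X}=\emptyset$: any $Z\in[a,Y']\cap\mathcal{X}$ admits, by openness, a finite $b$ with $a\sqsubseteq b\sqsubseteq Z$ and $[b,Z]\sse\mathcal{X}$, whence $Z\in[b,Y']$ witnesses acceptance of $b$, contradicting rejection of $b$ by $Y'$.

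For step (i), complement-closure of the completely Ramsey sets is immediate and finite-intersection closure follows from composing two applications of the dichotomy along a shrinking chain; the $\sigma$-ideal property of Ramsey null sets is the main content and uses a fusion. Given $\mathcal{N}=\bigcup_{n<\om}\mathcal{N}_n$ and $[a,X]$, recursively produce $X=Y_0\contains Y_1\contains\cdots$ in $[a,X]$ with $[a,Y_{n+1}]\cap\mathcal{N}_n=\emptyset$ and with $Y_{n+1}$ agreeing with $Y_n$ on its first $n+1$ elements past $\max(a)$; the diagonal fusion $Y$ then lies in every $[a,Y_n]$, so $[a,Y]\cap\mathcal{N}_n=\emptyset$ for every $n$ and hence $[a,Y]\cap\mathcal{N}=\emptyset$. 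Step (iii) reduces via this $\sigma$-ideal property to showing every Ellentuck-nowhere-dense set is Ramsey null, which is immediate from the description of the basis $\{[b,W]:a\sqsubseteq b\sqsubset W,\ W\sse X\}$ of the subspace topology on $[a,X]$. For step (iv), given a Baire-property set $\mathcal{X}=\mathcal{O}\sd\mathcal{M}$ and $[a,X]$, apply (ii) to $\mathcal{O}$ to obtain $Y_1\in[a,X]$ giving the dichotomy for $\mathcal{O}$, then apply (iii) to $\mathcal{M}$ to obtain $Y_2\in[a,Y_1]$ with $[a,Y_2]\cap\mathcal{M}=\emptyset$; on $[a,Y_2]$ the sets $\mathcal{X}$ and $\mathcal{O}$ coincide, so $Y_2$ gives the dichotomy for $\mathcal{X}$.

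The main obstacle is the double fusion in step (ii): the first $Y$ must decide simultaneously every node of the infinitely-branching tree of finite extensions of $a$ inside $Y$, and the second $Y'$ must reject every such node, after which openness of $\mathcal{X}$ promotes pointwise rejection to the global statement $[a,Y']\cap\mathcal{X}=\emptyset$. The bookkeeping is delicate because thinning at one level may disturb the deciding status of extensions passing through later levels; a diagonal enumeration that peels off one element at a time while respecting already-decided initial segments handles this, and the same template drives the $\sigma$-ideal fusion in step (i).
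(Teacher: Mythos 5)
The paper states this theorem as a classical result, citing Ellentuck, and gives no proof of its own, so there is nothing internal to compare against. Your proposal is the standard combinatorial-forcing proof (the same template that underlies the Abstract Ellentuck Theorem, Theorem \ref{thm.AET}, which the paper quotes from Todorcevic's book), and its overall architecture --- open sets are completely Ramsey via an accept/reject/decide double fusion, Ramsey null sets form a $\sigma$-ideal, nowhere dense and hence meager sets are Ramsey null, and the Baire-property case follows by writing $\mathcal{X}=\mathcal{O}\sd\mathcal{M}$ and restricting below a set avoiding $\mathcal{M}$ --- is the right one. The pigeonhole step (if infinitely many $n$ had $Y$ accepting $a\cup\{n\}$, then $a$ together with those $n$ would be a member of $[a,Y]$ accepting $a$, contradicting rejection) and the diagonal fusion giving the $\sigma$-ideal property are both correct.

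One point needs repair. Throughout step (ii) you quantify over ``finite $b$ with $a\sqsubseteq b\sqsubset Y$,'' which in the paper's notation means $b$ is an initial segment of $Y$ itself; these form a single chain, and deciding (or rejecting) only them is not enough. In your closing openness argument the set $b$ you extract satisfies $a\sqsubseteq b\sqsubset Z$ for an arbitrary $Z\in[a,Y']$, so $b$ is an arbitrary finite subset of $Y'$ end-extending $a$, not an initial segment of $Y'$, and the rejection you established does not literally apply to it. The fusions must decide, and then reject, every finite $b$ with $a\sqsubseteq b$ and $b\setminus a\sse Y\setminus(\max(a)+1)$ --- equivalently, every $b$ occurring as an initial segment of some member of $[a,Y]$. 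Your own description of the enumeration (``candidate extensions of $a$ in order of their largest element,'' with finitely many sets to decide once $n$ elements have been chosen) already handles exactly this class, so the fix is purely one of stating the correct quantifier; but as written the conclusion of the fusion lemma is too weak to support the contradiction that closes step (ii). A smaller gloss: that nowhere dense sets are Ramsey null is not ``immediate from the description of the basis''; one either runs a further fusion or, more cleanly, applies step (ii) to the open complement of the closure $\overline{\mathcal{N}}$ and notes that the alternative $[a,Y]\sse\overline{\mathcal{N}}$ is impossible for nowhere dense $\mathcal{N}$.
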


Those familiar with Mathias forcing will notice the strong correlation between Ellentuck's basic open sets and conditions in Mathias forcing.
Forcing with the collection of basic open  sets in the Ellentuck topology, partially ordered by inclusion,
is in fact equivalent to Mathias forcing.

\subsection{Fronts, barriers, and the Nash-Williams Theorem}\label{subsection.NW}

Fronts and barriers are collections of finite sets which approximate all infinite sets and are minimal in some sense.

\begin{defn}\label{defn.front}
A set $\mathcal{F}\sse[\om]^{<\om}$ is a {\em front} if
\begin{enumerate}
\item
For each $X\in[\om]^{\om}$, there is an $a\in\mathcal{F}$ such that $a\sqsubset X$.
\item
Whenever $a,b\in \mathcal{F}$ and $a\ne b$, then $a\not\sqsubset b$.
\end{enumerate}
$\mathcal{F}$ is a {\em barrier} if it satisfies (1) and also $(2')$ holds:
\begin{enumerate}
\item[($2'$)]
Whenever $a,b\in\mathcal{F}$ and $a\ne b$,
then $a\not\subset b$.
\end{enumerate}
\end{defn}

A family satisfying (2) is called {\em Nash-Williams} and a family satisfying ($2'$) is called {\em Sperner}.
The notions of front and barrier may be relativized to any infinite subset of $\om$.
By a theorem of Galvin,
for each front $\mathcal{F}$ on some infinite $M\sse \om$, there is an infinite subset $N\sse M$ such that  $\mathcal{F}|N:=\{a\in \mathcal{F}:a\sse N\}$ is a barrier on $N$.

Notice that for each $k<\om$, the set $[\om]^k$ is both a front and a barrier on $\om$.
The set $[\om]^k$ is in fact the uniform barrier of rank $k$.
Uniform barriers, and fronts, are defined by recursion on the rank.
Given uniform barriers $\mathcal{B}_n$ on $\om\setminus (n+1)$,
with the rank of $\mathcal{B}_n$ being $\al_n$, where either all $\al_n$ are the same or else they are strictly increasing,
the barrier
\begin{equation}\label{eq.barrierinduction}
\mathcal{B}=\{\{n\}\cup a:a\in\mathcal{B}_n,\ n<\om\}
\end{equation}
is a uniform barrier of rank $\sup\{\al_n+1:n<\om\}$.
The {\em Shreier barrier} $\mathcal{S}$ is the fundamental example of a uniform barrier of rank $\om$.
\begin{equation}
\mathcal{S}=\{a\in[\om]^{<\om}:|a|=\min(a)+1\}.
\end{equation}
This is the same as letting $\mathcal{B}_n=[\om\setminus (n+1)]^n$  and
defining $\mathcal{S}$ as in Equation (\ref{eq.barrierinduction}).

The Nash-Williams Theorem shows that every clopen subset of the Baire space with the metric topology  has the Ramsey property.
Though this follows from the Ellentuck Theorem, we state it here since it will be useful in several proofs which do not require the full strength of the Ellentuck Theorem.

\begin{thm}[Nash-Williams]\label{thm.NW}
Given any front $\mathcal{F}$ on an infinite set $M\sse\om$ and any partition of $\mathcal{F}$ into finitely many  pieces, $\mathcal{F}_i$, $i<l$ for some $l\ge 1$,
there is an infinite  $N\sse M$ such that
$\mathcal{F}|N\sse \mathcal{F}_i$
for one $i<l$.
\end{thm}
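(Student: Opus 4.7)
My plan is to prove this by induction on the rank of the front $\mathcal{F}$. Fronts (and barriers) are well-founded: since any element of $[M]^\omega$ has an initial segment in $\mathcal{F}$ and $\mathcal{F}$ is Nash-Williams, the tree $T(\mathcal{F})$ of proper initial segments of members of $\mathcal{F}$ has no infinite branch, so it carries a well-defined ordinal rank $\alpha(\mathcal{F}) < \omega_1$. Additionally, by grouping color classes I may assume that $l$ is arbitrary throughout the induction, rather than reducing to $l=2$.

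The base case is $\alpha(\mathcal{F}) = 0$, in which $\mathcal{F} = \{\emptyset\}$, and the statement is immediate. For the inductive step, assume $\alpha(\mathcal{F}) > 0$ and the theorem holds for all fronts of smaller rank, on any infinite subset of $\omega$. For each $m \in M$, define
\[
\mathcal{F}_{(m)} = \{b \in [\omega]^{<\omega} : \{m\} \cup b \in \mathcal{F},\ \min(b) > m\}.
\]
Each $\mathcal{F}_{(m)}$ is a front on $M \setminus (m+1)$ of rank strictly less than $\alpha(\mathcal{F})$, and the partition $\mathcal{F} = \bigcup_{i<l} \mathcal{F}_i$ restricts to a partition of $\mathcal{F}_{(m)}$ into $l$ pieces $\mathcal{F}_{(m),i} := \{b : \{m\} \cup b \in \mathcal{F}_i\}$. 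I will now run a diagonal (fusion-style) construction: set $M_0 = M$, and at stage $k$ pick $m_k = \min(M_k)$, then invoke the inductive hypothesis on $\mathcal{F}_{(m_k)}$ restricted to $M_k \setminus (m_k+1)$ to obtain an infinite $M_{k+1} \subseteq M_k \setminus (m_k+1)$ and a color $c(m_k) \in l$ with $\mathcal{F}_{(m_k)} | M_{k+1} \subseteq \mathcal{F}_{(m_k), c(m_k)}$. Let $N' = \{m_k : k < \omega\}$ and apply the classical pigeonhole principle to $c : N' \to l$ to obtain an infinite $N \subseteq N'$ on which $c$ is constantly equal to some $i < l$.

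It remains to check $\mathcal{F}|N \subseteq \mathcal{F}_i$. Given $a \in \mathcal{F}|N$, we have $a \ne \emptyset$ (as rank is positive), so let $m = \min(a) = m_k$ for the appropriate $k$. Then $a \setminus \{m\} \subseteq N \setminus \{m_k\} \subseteq \{m_{k+1}, m_{k+2}, \ldots\} \subseteq M_{k+1}$, so $a \setminus \{m\} \in \mathcal{F}_{(m_k)} | M_{k+1} \subseteq \mathcal{F}_{(m_k), c(m_k)} = \mathcal{F}_{(m_k), i}$, which gives $a \in \mathcal{F}_i$ as desired. The main obstacle is the well-foundedness underpinning the rank definition and the verification that $\mathcal{F}_{(m)}$ is genuinely a front on $M \setminus (m+1)$; both are standard but require care, especially at limit ranks such as the Schreier barrier. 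Once these are in place, the argument is just a transfinite version of the diagonal argument used to prove finite-dimensional Ramsey.
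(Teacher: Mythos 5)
Your proof is correct. Note, however, that the paper does not actually supply an argument for Theorem~\ref{thm.NW}: it cites Nash-Williams and remarks only that the statement follows from the Ellentuck Theorem (Theorem~\ref{thm.Ellentuck}), since the set of $X$ whose unique $\mathcal{F}$-initial segment lies in a given $\mathcal{F}_i$ is clopen in the Ellentuck topology relativized to $[M]^{\om}$, hence has the property of Baire and is Ramsey; iterating over the $l$ colors and using that the $\mathcal{F}_i$ partition $\mathcal{F}$ yields the homogeneous $N$. Your route is the classical direct one: well-foundedness of the tree of proper initial segments of members of $\mathcal{F}$ (which follows exactly as you say from conditions (1) and (2) of Definition~\ref{defn.front}), induction on its rank, the derived fronts $\mathcal{F}_{(m)}$ of strictly smaller rank, a fusion along $m_0<m_1<\cdots$, and a final pigeonhole on the colors $c(m_k)$. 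The trade-off is clear: the Ellentuck-based derivation is a two-line corollary of a much stronger theorem, whereas your argument is elementary and self-contained, which is precisely in the spirit of the paper's stated reason for isolating Theorem~\ref{thm.NW} (``useful in several proofs which do not require the full strength of the Ellentuck Theorem''), and it is the argument that generalizes to the Abstract Nash-Williams Theorem. Two small points to tidy: your definition of $\mathcal{F}_{(m)}$ should explicitly admit $b=\emptyset$ when $\{m\}\in\mathcal{F}$ (the clause $\min(b)>m$ is vacuous there, and this case is needed in the final verification when $a$ is a singleton); and you should record that a front on $M\setminus(m_k+1)$ restricts to a front on the smaller infinite set $M_k\setminus(m_k+1)$ before invoking the inductive hypothesis at stage $k$. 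Neither is a gap, just bookkeeping.
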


\begin{rem}
Any ultrafilter $\mathcal{U}$  generic for $([\om]^{\om},\sse^*)$
satisfies a generic version of the Nash-Williams Theorem.
By a density  argument,
for each $k\ge 1$, given any partition of $[\om]^k$  into finitely many sets,
there is a member $X\in\mathcal{U}$ such that $[X]^k$ is contained in one piece of the partition.
Thus, we may see the Ramsey property for the generic ultrafilter as  instances of the Nash-Williams Theorem for uniform barriers of finite rank.
\end{rem}

\subsection{Canonical equivalence relations on barriers}\label{subsection.canonicaleqrel}

Letting the number of colors increase,
we see that coloring with  infinitely many colors  corresponds  to  forming an equivalence relation on a collection of finite sets, where two finite sets are equivalent if and only if they have the same color.
The first Ramsey-like theorem for infinite colorings of finite sets of natural numbers is due to  \Erdos\ and Rado.
Though it is not always possible to find a large homogeneous set in one color,
it is possible to find a large set on which the equivalence relation is in some sense {\em canonical}.
This term is used to refer to
some simple equivalence relation which  once achieved, is inherited by all further infinite subsets.

Given $k\ge 1$,
the canonical equivalence relations on $[\om]^k$ are of the form $\E_I$, where $I\sse k$, defined by
\begin{equation}
a\, \E_I\, b \llra \{a_i:i\in I\}=\{b_i:i\in I\},
\end{equation}
where $a,b\in[\om]^k$ and $\{a_0,\dots,a_{k-1}\}$ and $\{b_0,\dots,b_{k-1}\}$ are the strictly increasing enumerations of $a$ and $b$.

\begin{thm}[\Erdos-Rado]\label{thm.ER}
Given $k\ge 1$ and an equivalence relation $\E$ on $[\om]^k$,
there is some infinite $M\sse \om$ and some $I\sse k$ such that
the restriction of $\E$ to $[M]^k$ is exactly $\E_I\re [M]^k$.
\end{thm}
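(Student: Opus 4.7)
The plan is to reduce the canonical statement to the classical finite Ramsey theorem via an encoding of $\E$ as a finite coloring of $2k$-tuples, then extract the index set $I$ from the homogenized pattern and verify that $\E$ and $\E_I$ coincide on a suitable thinning.

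For the encoding, let $C$ be the (finite) set of equivalence relations on $\binom{[2k]}{k}$, and define $\chi : [\om]^{2k} \to C$ by $\chi(F) = \sim_F$, where for $F = \{f_0 < \dots < f_{2k-1}\}$ and $s, t \in \binom{[2k]}{k}$, $s \sim_F t$ if and only if $\{f_i : i \in s\} \, \E \, \{f_i : i \in t\}$. An application of Theorem~\ref{thm.NW} to the front $[\om]^{2k}$ with colors in $C$ yields an infinite $M_0 \sse \om$ on which $\chi$ is constant, with common value $\sim$.

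To extract $I$, I first observe that for any $F \in [M_0]^{2k}$ and any pair of $k$-subsets $a, b \sse F$ with index sets $s_a, s_b \in \binom{[2k]}{k}$, a direct check gives that $a_i = b_i$ if and only if the $i$-th entries of the sorted $s_a$ and $s_b$ coincide. By homogeneity of $\chi$, $a \E b \Llra s_a \sim s_b$, and for pairs $a, b \in [M_0]^k$ with $|a \cup b| < 2k$ the answer is independent of the padding of $a \cup b$ to a $2k$-subset of $M_0$. I then define $I \sse k$ by putting $i \in I$ if and only if there exist $s, t \in \binom{[2k]}{k}$ differing exactly at the $i$-th sorted position with $s \not\sim t$.

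Finally, I would establish $\E \re [M]^k = \E_I \re [M]^k$ for some infinite $M \sse M_0$. Given $a, b \in [M]^k$, chain them through a sequence of single-coordinate swaps staying within $M$; each swap alters exactly one coordinate, and by the definition of $I$ it preserves $\E$-class iff the swapped position is not in $I$. Transitivity of $\E$ then yields $a \E b \Llra a \E_I b$. The main obstacle is the underlying structural lemma that the homogenized $\sim$ is genuinely of ``coordinate agreement'' form: for $i \in I$, \emph{every} pair $s, t$ differing only at the $i$-th sorted position should satisfy $s \not\sim t$, and for $i \notin I$ the opposite should hold. This uses transitivity of $\sim$ (inherited from $\E$) together with the shift-invariance across $2k$-tuples forced by homogeneity of $\chi$, and can be made transparent by a second Ramsey thinning on a slightly higher arity (say $3k$) that encodes both endpoints of each single-coordinate swap within a single homogeneous tuple.
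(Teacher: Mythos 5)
The paper itself offers no proof of Theorem \ref{thm.ER}: it is stated as a classical result, and the surrounding text only indicates that it follows from the \Pudlak-\Rodl\ Theorem (Theorem \ref{thm.PR}) by specializing to the uniform barrier $[\om]^k$, where the irreducible functions are exactly the projections $\pi_I$. Your argument is therefore a genuinely different and more self-contained route: encoding $\E$ as a finite coloring of $2k$-tuples that records the induced pattern on pairs of $k$-subsets, homogenizing via Theorem \ref{thm.NW}, and reading off $I$ is the classical \Erdos-Rado argument, and it buys you a proof from the infinite Ramsey theorem alone rather than from the much deeper canonization theorem for barriers of arbitrary countable rank. The encoding, the padding-independence observation, and the translation between coordinate agreement of $k$-sets and coordinate agreement of their index sets are all correct.

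There is, however, one concrete gap beyond the obstacle you flag. Granting the structural lemma you identify (that for $i\in I$ \emph{every} pair differing only at the $i$-th sorted position is inequivalent, not merely some pair), the final step still does not go through as written. Transitivity of $\E$ does give the implication from $\pi_I(a)=\pi_I(b)$ to $a\,\E\,b$, since every swap in your chain then preserves the class. But for the converse --- that $a\,\E\,b$ forces agreement at all positions of $I$ --- a chain of single-coordinate swaps proves nothing when $a$ and $b$ differ at two or more positions of $I$: the chain then contains several class-breaking steps, and inequivalence is not transitive, so nothing in the argument prevents the two endpoints from being equivalent anyway. The standard repair, which your proposed $3k$-thinning is well suited to support, is a three-set configuration argument: if some type of pair differing at exactly a nonempty set $D\sse I$ of positions were an equivalent type, realize inside the homogeneous set three $k$-sets $b$, $c$, $c'$ with $(b,c)$ and $(b,c')$ both of that type while $c$ and $c'$ differ at exactly one position $i\in D$; transitivity of $\E$ then forces $c\,\E\,c'$, contradicting the single-swap lemma at $i$. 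With that addition (and a routine check that such configurations are realizable in an infinite set), your outline becomes a complete proof.
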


It is often quite useful to think of canonical equivalence relations in terms of projection maps.
For $I\sse k$ and $a\in[\om]^k$,
let $\pi_I(a)=\{a_i:i\in I\}$.
Then $a\, \E_I\, b$ if and only if $\pi_I(a)=\pi_I(b)$.

The notion of canonical equivalence relations may be extended to all uniform barriers of any countable ordinal rank.
This is the content of the next theorem of \Pudlak\ and \Rodl.
Since the lengths of members of a uniform barrier of infinite rank are not bounded,
the notion of canonical becomes a bit less obvious at the start.
However, just as the canonical equivalence relations on $[\om]^k$ can be thought of as projections to the $i$-th members of $a$, for $i$ in a given indext set,
so too it can be instructive to think of irreducible functions as projections to certain indexed members of a given finite set.

\begin{defn}[irreducible function]\label{def.irred}
Let $\mathcal{B}$ be a barrier on $\om$.
A function $\varphi:\mathcal{B}\ra[\om]^{<\om}$
is {\em irreducible}
if
\begin{enumerate}
\item
For each $a\in\mathcal{B}$, $\varphi(a)\sse a$;
\item
For all $a,b\in\mathcal{B}$, if $\varphi(a)\ne\varphi(b)$,
then $\varphi(a)\not\sse\varphi(b)$.
\end{enumerate}
\end{defn}

Property (2)  implies that the image set $\{\varphi(a):a\in\mathcal{B}\}$ is a Sperner set, and thus forms a barrier on some infinite set.

\begin{thm}[\Pudlak-\Rodl]\label{thm.PR}
Given an equivalence relation $\E$ on a barrier $\mathcal{B}$ on $\om$,
there is an infinite subset $M\sse \om$ and  an irreducible function $\varphi$ which canonizes $\E$ on $\mathcal{B}\re M$:
For all $a,b\in\mathcal{B}|M$, $a\,\E\, b$
if and only if $\varphi(a)=\varphi(b)$.
\end{thm}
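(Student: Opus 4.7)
The plan is to prove the theorem by transfinite induction on the rank of the uniform barrier $\mathcal{B}$, mirroring the recursive description of uniform barriers given in Equation~(\ref{eq.barrierinduction}).

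For the base case, when $\rank(\mathcal{B})<\om$, after restricting to an infinite $M\sse\om$ we may assume $\mathcal{B}|M=[M]^k$ for some $k\ge 1$. Apply the \Erdos-Rado Theorem (Theorem~\ref{thm.ER}) to obtain a further infinite $M'\sse M$ and a set $I\sse k$ such that $\E$ restricted to $[M']^k$ is exactly $\E_I$. Then $\vp(a):=\pi_I(a)$ is irreducible: condition~(1) in Definition~\ref{def.irred} is immediate, and condition~(2) holds because any two distinct values $\pi_I(a)\ne\pi_I(b)$ on $[M']^k$ are themselves $k$-element sets of the same size $|I|$, so neither can be a proper subset of the other.

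For the inductive step, write $\mathcal{B}=\{\{n\}\cup a:a\in\mathcal{B}_n,\ n<\om\}$ with each $\mathcal{B}_n$ a uniform barrier on $\om\setminus(n+1)$ of strictly smaller rank. For each $n$, define the induced equivalence relation $\E_n$ on $\mathcal{B}_n$ by declaring $a\, \E_n\, b$ iff $\{n\}\cup a\,\E\,\{n\}\cup b$. The inductive hypothesis applied to $\E_n$ yields an infinite $N_n\sse \om\setminus(n+1)$ and an irreducible $\vp_n:\mathcal{B}_n|N_n\ra[\om]^{<\om}$ that canonizes $\E_n$ on $\mathcal{B}_n|N_n$. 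A fusion argument produces a single infinite $M_0=\{m_0<m_1<\cdots\}$ such that, for every $k$, $M_0\setminus(m_k+1)\sse N_{m_k}$; this handles all intra-column equivalences simultaneously.

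What remains, and will be the main obstacle, is the analysis of cross-column equivalences: for $m<n$ in $M_0$, $a\in\mathcal{B}_m|M_0$, and $b\in\mathcal{B}_n|M_0$, one must determine when $\{m\}\cup a\,\E\,\{n\}\cup b$ and show this is governed by the values of a single irreducible function. To do this, consider the front of pairs $\{m\}\cup a$ and $\{n\}\cup b$ and color it by $\E$-equivalence; an application of the Nash-Williams Theorem (Theorem~\ref{thm.NW}), iterated through a diagonal/fusion procedure and then combined with the inductive irreducible functions $\vp_{m_k}$, produces a final infinite $M\sse M_0$ on which the cross-column equivalence of $\{m\}\cup a$ and $\{n\}\cup b$ is determined by a common irreducible projection. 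Defining $\vp$ on $\mathcal{B}|M$ by combining these stabilized projections, with a last check that the resulting image family is Sperner and that $\vp(c)\sse c$ for all $c\in\mathcal{B}|M$, yields the canonizing irreducible function; the delicacy lies in arranging the fusion so that the inter-column Nash-Williams stabilizations are compatible with the already-obtained intra-column canonizations, so that the glued $\vp$ remains irreducible.
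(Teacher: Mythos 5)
The paper does not prove Theorem \ref{thm.PR}; it states it as a cited classical result of \Pudlak\ and \Rodl, so there is no in-paper argument to compare against. Judged on its own terms, your base case is fine (for a uniform barrier of finite rank the values of $\pi_I$ on $[M']^k$ all have size $|I|$, so condition (2) of Definition \ref{def.irred} holds), and the intra-column part of the inductive step --- inducing $\E_n$ on $\mathcal{B}_n$, applying the hypothesis, and fusing the sets $N_n$ --- is a reasonable skeleton. But the cross-column analysis, which you correctly flag as ``the main obstacle,'' is where essentially all of the content of the \Pudlak-\Rodl\ theorem lives, and your proposal does not actually supply it. The Nash-Williams Theorem (Theorem \ref{thm.NW}) homogenizes a \emph{finite} coloring of a \emph{front}, i.e.\ of single finite sets; the relation ``$\{m\}\cup a\,\E\,\{n\}\cup b$'' is a $2$-valued function on \emph{pairs} of barrier elements, and even if you encode such pairs as members of an auxiliary front, homogenizing would force the cross-column relation to be constantly ``equivalent'' or constantly ``inequivalent'' on the final set. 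That is false in general: the canonical outcome is typically $\{m\}\cup a\,\E\,\{n\}\cup b \Llra \vp_m(a)=\vp_n(b)$, with both truth values realized on every infinite set. So what is needed is a canonization of the cross-column relation, not a homogenization, and the induction as you have set it up gives you no handle on it --- this is precisely the step that cannot be dispatched by citing Theorem \ref{thm.NW} plus fusion.

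A second, related gap is the gluing. Even granting stabilized columns, you give no argument that the separately obtained $\vp_{m_k}$ cohere into a single irreducible $\vp$: condition (2) of Definition \ref{def.irred} is a global Sperner-type condition comparing $\vp(\{m\}\cup a)$ with $\vp(\{n\}\cup b)$ for $m\ne n$, and nothing in the inductive hypothesis controls inclusions between values coming from different columns. You also never address the key dichotomy of whether the leading element $n$ must be included in $\vp(\{n\}\cup a)$ (i.e.\ whether the $\E$-class of $\{n\}\cup a$ actually depends on $n$); deciding this requires its own Ramsey argument and interacts with the choice of the $\vp_n$, since if $n$ is omitted one must show that classes from different columns that share a $\vp$-value really are $\E$-equivalent. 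Your proposal correctly locates where the difficulties are but replaces them with an appeal to ``a diagonal/fusion procedure'' that, as described, proves too much in one place and nothing in the other.
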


The \Erdos-Rado Theorem follows from the \Pudlak-\Rodl\ Theorem, and it may be instructive for the reader to prove this.

\begin{rem}
We stated that it can be instructive to think of irreducible functions as projection maps.
This view provides intuition for understanding irreducible maps canonizing equivalence relations on barriers for  topological Ramsey spaces which are
 more complex than the Ellentuck space.
\end{rem}

\subsection{Fubini iterates of ultrafilters and the correspondence with uniform fronts}

New ultrafilters may be constructed from given ultrafilters using the process of Fubini product and iterating it countably many times.
Recall that
the {\em Fubini product} $\lim_{n\ra \mathcal{U}}\mathcal{V}_n$ of $\mathcal{U}$ with $\mathcal{V}_n$, $n<\om$, is
\begin{equation}
\{A\sse \om\times\om: \{n<\om:\{j<\om:(n,j)\in A\}\in\mathcal{V}_n\}\in\mathcal{U}\}.
\end{equation}
Note that $[\om]^2$ is in one-to-one correspondence with  the upper triangle of $\om\times\om$.
As long as $\mathcal{U}$-many of the  ultrafilters $\mathcal{V}_n$ are nonprincipal,
the upper triangle $\{(n,j):n<j<\om\}$ is a member of $\lim_{n\ra\mathcal{U}}\mathcal{V}_n$.
Thus, $\lim_{n\ra\mathcal{U}}\mathcal{V}_n$ is isomorphic to the ultrafilter  $\mathcal{W}$ defined on base set $[\om]^2$
by
$B\sse[\om]^2\in\mathcal{W}$ if and only if
for $\mathcal{U}$ many $n<\om$, the set $\{j>n: \{n,j\}\in B\}\in\mathcal{V}_n$.
Hence
we may assume that the base set for the ultrafilter is $[\om]^2$ rather than $\om\times\om$.
It thus is natural to  let $\lim_{n\ra \mathcal{U}}\mathcal{V}_n$ denote this ultrafilter $\mathcal{W}$ on base set $[\om]^2$.

This connection continues when we iterate the Fubini product construction.
If we have ultrafilters $\mathcal{W}_n$ each of which is a Fubini product of some ultrafilters
and $\mathcal{X}$ is another ultrafilter,
then $\lim_{n\ra\mathcal{X}}\mathcal{W}_n$  is an ultrafilter on base set $\om\times\om\times\om$.
As long as the ultrafilters are nonprincipal, this
is isomorphic to an ultrafilter on base set $[\om]^3$,
since each ultrafilter $\mathcal{W}_n$ is (modulo some set in $\mathcal{W}_n$)  an ultrafilter on base set $[\om]^2$.

This recursive construction continues onward so that to each ultrafilter $\mathcal{W}$ which is obtained via a countable iteration of Fubini products of ultrafilters, there corresponds a barrier $\mathcal{B}$ of the same rank as the recursive rank of the construction of $\mathcal{W}$ so that the base set of $\mathcal{W}$ may without any loss of information be assumed to be $\mathcal{B}$.
This idea of using uniform barriers as the base sets for iterated Fubini products is due to Todorcevic.
It is delineated in more detail in \cite{DobrinenCanonicalMaps15}.


\subsection{Ramsey ultrafilters are Tukey minimal, and the RK structure inside its Tukey type is exactly the Fubini powers of the Ramsey ultrafilter}\label{subsection.initialTRKstructure}

Given an ultrafilter $\mathcal{U}$ and $1\le\al<\om_1$,
 $\mathcal{U}^{\al}$ denotes the $\al$-th Fubini power of $\mathcal{U}$.
This is formed by the Fubini product where all the $\mathcal{V}_n=\mathcal{U}$ at each iteration of the Fubini product.
For limit $\al$, given $\mathcal{U}^{\beta}$ for all $\beta<\al$,
$\mathcal{U}^{\al}$ denotes $\lim_{n\ra\mathcal{U}}\mathcal{U}^{\beta_n}$, where $(\beta_n)_{n<\om}$  is any increasing sequence cofinal in $\al$.
The following theorem
is attributed to Todorcevic 
 in \cite{Raghavan/Todorcevic12}.

\begin{thm}[Todorcevic,  \cite{Raghavan/Todorcevic12}]
Each Ramsey ultrafilter is Tukey minimal.
Moreover, if $\mathcal{U}$ is Ramsey and $\mathcal{V}\le_T\mathcal{U}$,
then $\mathcal{V}\equiv_{RK}\mathcal{U}^{\al}$ for some $\al<\om_1$.
\end{thm}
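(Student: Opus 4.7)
The plan targets the moreover assertion; Tukey minimality follows because $\mathcal{U}^\al\equiv_T\mathcal{U}$ for every $\al<\om_1$ (inductively: $\mathcal{U}\le_T\mathcal{U}^\al$ via the first-coordinate projection, and the reverse uses the Ramsey property, which at successor steps guarantees that every $A\in\mathcal{U}^{\al+1}$ contains an $(\al+1)$-barrier trace on some $Y\in\mathcal{U}$, with limit $\al$ handled via cofinal sequences). So assume $\mathcal{V}$ is nonprincipal with $\mathcal{V}\le_T\mathcal{U}$, witnessed by a monotone cofinal map $f:\mathcal{U}\ra\mathcal{V}$. Since Ramsey implies p-point, Theorem~\ref{thm.20} supplies $\tilde X\in\mathcal{U}$ and a monotone continuous $f^*:\mathcal{P}(\om)\ra\mathcal{P}(\om)$ agreeing with $f$ on $\mathcal{U}\re\tilde X$. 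Let $\hat f:2^{<\om}\ra 2^{<\om}$ be its finitary approximation.

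The central technical step is to represent $f^*$ through a barrier: find $X\in\mathcal{U}$ with $X\sse\tilde X$, a barrier $\mathcal{F}$ on $X$, and a labeling $h:\mathcal{F}\ra\om$ such that, for every $Y\in\mathcal{U}\re X$,
\[ f^*(Y)=\{h(a):a\in\mathcal{F},\ a\sse Y\}. \]
For each $n<\om$, the set $\{Y:n\in f^*(Y)\}$ is open (by continuity of $f^*$) and upward closed in the subset order (by monotonicity), so by the Nash-Williams theorem (Theorem~\ref{thm.NW}) applied after passing to some $X_n\in\mathcal{U}$, membership of $n$ is decided by finite subset witnesses of the form ``$a\sse Y$''. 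Diagonalizing over $n$ using the p-point property of $\mathcal{U}$, one extracts a single $X\in\mathcal{U}$ on which these witnesses simultaneously form a Nash-Williams family; Galvin's theorem then refines $X$ so this family is a barrier $\mathcal{F}$, and $h$ is the natural labeling sending each witness to the associated $n$.

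Apply the Pudl\'ak--R\"odl theorem (Theorem~\ref{thm.PR}) to the equivalence $a\,\E\,b\Llra h(a)=h(b)$ on $\mathcal{F}$: this yields an infinite $M\sse X$ and an irreducible $\vp:\mathcal{F}|M\ra[\om]^{<\om}$ factoring $h$ as $h=\tilde h\circ\vp$ for some injection $\tilde h$. The image $\vp(\mathcal{F}|M)$ is a barrier of some countable rank $\al$. By the correspondence between countable Fubini iterates of $\mathcal{U}$ and uniform barriers recalled earlier in this section, the ultrafilter on base set $\vp(\mathcal{F}|M)$ generated by $\mathcal{U}\re M$ is isomorphic to $\mathcal{U}^\al$, and $\tilde h$ becomes an RK-isomorphism witnessing $\mathcal{V}\equiv_{RK}\mathcal{U}^\al$.

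The main obstacle is the barrier extraction in the second paragraph: coherently producing the finite subset witnesses and arranging them into a Nash-Williams family on a single $X\in\mathcal{U}$, then converting to a barrier via Galvin's theorem while preserving the representation formula for $f^*(Y)$, requires iterated Nash-Williams refinements combined with careful $\sigma$-closed diagonalization inside the p-point $\mathcal{U}$. Once this representation is in hand, the Pudl\'ak--R\"odl canonization and the Fubini/barrier dictionary complete the identification of $\mathcal{V}$ as the Fubini power $\mathcal{U}^\al$.
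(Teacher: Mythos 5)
Your overall architecture (continuity from Theorem \ref{thm.20}, reduction to a labelled front, Pudl\'ak--R\"odl canonization, the barrier/Fubini dictionary, and Tukey minimality via $\mathcal{U}^{\al}\equiv_T\mathcal{U}$) matches the paper's, but the ``central technical step'' is where the proof actually lives, and as stated it has a genuine gap. You posit an exact representation $f^*(Y)=\{h(a):a\in\mathcal{F},\ a\sse Y\}$ with $\mathcal{F}$ a single Nash--Williams family and $h$ single-valued. Continuity and monotonicity do give, for each $n$ separately, a family $\mathcal{F}_n$ of minimal finite witnesses for $n\in f^*(Y)$ (no Nash--Williams theorem is needed for that), but (i) the union $\bigcup_n\mathcal{F}_n$ need not be Nash--Williams, since a witness for $n$ can be a proper initial segment of a witness for $m$, and (ii) a single finite set can be a minimal witness for several $n$, so no single-valued $h$ is available; neither the diagonalization over $n$ nor Galvin's theorem addresses either obstruction. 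Moreover the exact equality is not a consequence of monotonicity and continuity alone: for the monotone continuous map $f^*(Y)=Y\cup(Y+1)$, fix $y\in X$ and apply the Nash--Williams theorem to the front $\{a\setminus\{y\}:a\in\mathcal{F},\ \min(a)=y\}$ on $X\cap(y,\infty)$, $3$-colored according to whether $h(a)$ equals $y$, $y+1$, or neither; for the resulting homogeneous $Z$ (which can be taken in $\mathcal{U}$, since Ramsey ultrafilters homogenize partitions of fronts) and $Y=\{y\}\cup Z$, at least one of $y$, $y+1$ is missing from $h(\mathcal{F}|Y)$. So any proof of your formula would have to exploit that $f$ is cofinal onto the ultrafilter $\mathcal{V}$, which your sketch never does.

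The paper sidesteps all of this with a weaker statement that suffices. It takes $\mathcal{F}$ to be the front of minimal initial segments $a$ with $\hat{f}(a)\ne\emptyset$ and sets $g(a)=\min(\hat{f}(a))$; the key lemma (Theorem \ref{thm.50}, step (4) of Subsection \ref{subsection.initialTRKstructure}) is only that $g(\mathcal{F}|Y)\sse f(Y)$ and that each $g(\mathcal{F}|Y)$ is a member of $\mathcal{V}$ --- this follows from monotonicity and cofinality by a short intersection argument --- whence $\mathcal{V}=g(\mathcal{U}\re\mathcal{F})$ with no exact description of $f^*(Y)$ required. From that point on, your Pudl\'ak--R\"odl canonization, the identification of the resulting $\mathcal{U}\re\mathcal{B}$ with $\mathcal{U}^{\al}$ for $\al$ the rank of $\mathcal{B}$, and the deduction of Tukey minimality all go through as you describe. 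You should replace the exact representation with this pushforward lemma and prove it.
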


The structure of his proof is as follows:
Suppose that $\mathcal{U}$ is a Ramsey ultrafilter, $\mathcal{V}\le_T\mathcal{U}$, and
$f:\mathcal{U}\ra\mathcal{V}$  is a
 monotone cofinal map.
\begin{enumerate}
\item
By Theorem \ref{thm.20},
there is a member $X\in\mathcal{U}$ such that $f$ is continuous when restricted to $\mathcal{U}\re X:=\{U\in\mathcal{U}:U\sse X\}$.
Without loss of generality, $f$ may be assumed to be continuous on all of $\mathcal{U}$.
\item
The finitary map $\hat{f}:[\om]^{<\om}\ra[\om]^{<\om}$ approximating $f$ on $\mathcal{U}$
is used to define the set $\mathcal{F}$ of all $U\cap n$ where $U\in\mathcal{U}$ and $n$ is minimal such that $\hat{f}(U\cap n)\ne\emptyset$.
The set $\mathcal{F}$ forms a {\em front on $\mathcal{U}$} since each member of $\mathcal{U}$ end-extends some member of $\mathcal{F}$, and the minimality of the members of $\mathcal{F}$ imply that this set is Nash-Williams.
\item
The set $\mathcal{F}$ is the countable base set for a new ultrafilter.
For each set $U\in\mathcal{U}$,
\begin{equation}
 \mathcal{F}|U= \{a\in\mathcal{F}:a\sse U\}.
\end{equation}
$\mathcal{U}\re\mathcal{F}$ denotes the filter on base set $\mathcal{F}$ generated by the sets $\mathcal{F}|U$, for $U\in\mathcal{U}$.
This filter turns out to be an ultrafilter on base set $\mathcal{F}$.
This follows since $\mathcal{U}$ being Ramsey implies, with a bit of work, that
for any partition of $\mathcal{F}$ into two pieces $\mathcal{F}_0$, $\mathcal{F}_1$,
there is an $X\in\mathcal{U}$ such that  $\mathcal{F}|X\sse\mathcal{F}_i$ for one $i<2$.

\item
Define a function $g:\mathcal{F}\ra \om$ by
$g(a)=\min(\hat{f}(a))$.
Then $g$ is a Rudin-Keisler map from the countable base set $\mathcal{F}$ to $\om$.
It turns out that the ultrafilter $\mathcal{V}$ is equal to the Rudin-Keisler image $g(\mathcal{U}\re \mathcal{F})$.
Thus, to compare $\mathcal{V}$ with $\mathcal{U}$, it is sufficient to compare $g(\mathcal{U}\re\mathcal{F})$ with $\mathcal{U}$.
\item
$g$ colors $\mathcal{F}$ into infinitely many colors, thus inducing
 an equivalence relation on $\mathcal{F}$:
For $a,b\in\mathcal{F}$, $a$ and $b$ are equivalent if and only if $g(a)=g(b)$.
By the \Pudlak-\Rodl\ Theorem applied to this equivalence relation on  $\mathcal{F}$,
 there is a member $U\in\mathcal{U}$
such that below $U$, the equivalence relation is canonical witnessed by some irreducible map $\varphi$.
The image of $\mathcal{F}$ under $\varphi$ also is a barrier (possibly restricting below some smaller set in $\mathcal{U}$); let $\mathcal{B}=\{\varphi(a):a\in\mathcal{F}\}$.
Thus,
\begin{equation}
\mathcal{V}=g(\mathcal{U}\re \mathcal{F})\cong \varphi(\mathcal{U}\re \mathcal{F})
=\mathcal{U}\re\mathcal{B},
\end{equation}
where $\varphi(\mathcal{U}\re\mathcal{F})$ is the ultrafilter generated by
the sets $\{\varphi(a):a\in\mathcal{F}|Y\}$, $Y\in\mathcal{U}|U$,
and $\mathcal{U}\re\mathcal{B}$
is  the ultrafilter on base set $\mathcal{B}$ generated by the sets $\mathcal{B}|Y:=\{b\in\mathcal{B}:b\sse Y\}$, where $Y\in\mathcal{U}$.
This is  a form of being a Fubini power of $\mathcal{U}$, with rank equal to the rank of the barrier $\mathcal{B}$.
\end{enumerate}

This proof outline turns out to work for many other cases of ultrafilters  associated to topological Ramsey spaces, as was first discovered in \cite{Dobrinen/Todorcevic14}.
The structures become more complex as we move towards ultrafilters with weaker partition relations, but for the cases of p-points investigated so far, we can find the exact structure of the RK classes inside the Tukey types.


\section{Forcing with Topological Ramsey Spaces}\label{sec.tRsoutline}

In the previous section we outlined some key properties of  forcing with the Ellentuck space partially ordered by almost inclusion, which is forcing equivalent to the partial ordering $\mathcal{P}(\om)/\Fin$.
We  saw
that  the Ramsey property is a sufficient combinatorial property to completely characterize when an ultrafilter is forced by $\mathcal{P}(\om)/\Fin$
over certain special inner models of the forms HOD$(\mathbb{R})^{V[G]}$ or $L(\mathbb{R})$.
 We outlined how theorems regarding canonical equivalence relations on barriers can be applied using  continuous cofinal maps to classify the ultrafilters which are Tukey reducible to a Ramsey ultrafilter.
Moreover, the methods employed
made clear
  the exact structure of the Rudin-Keisler classes inside the Tukey type of a Ramsey ultrafilter:
These are the isomorphism classes of the countable Fubini powers of the Ramsey ultrafilter.
These results turn out to be special cases of
more
 general phenomena arising when one forces ultrafilters using topological Ramsey spaces partially ordered by the $\sigma$-closed almost reduction.
In this section we provide an overview of these phenomena for abstract topological Ramsey spaces.

We first begin with the abstract definition of a topological Ramsey space from Todorcevic's book \cite{TodorcevicBK10}.

\subsection{Basics of  general topological Ramsey spaces}\label{sec.reviewtRs}

Building on earlier work of Carlson and Simpson in \cite{Carlson/Simpson90}, Todorcevic distilled  key properties of the Ellentuck space into  four axioms, \bf A.1  \rm -  \bf A.4\rm, which guarantee that a space is a topological Ramsey space.
The  axioms
are defined for triples
$(\mathcal{R},\le,r)$
of objects with the following properties.
$\mathcal{R}$ is a nonempty set,
$\le$ is a quasi-ordering on $\mathcal{R}$,
 and $r:\mathcal{R}\times\om\ra\mathcal{AR}$ is a mapping giving us the sequence $(r_n(\cdot)=r(\cdot,n))$ of approximation mappings, where
$\mathcal{AR}$ is  the collection of all finite approximations to members of $\mathcal{R}$.
For $a\in\mathcal{AR}$ and $A,B\in\mathcal{R}$,
\begin{equation}
[a,B]=\{A\in\mathcal{R}:A\le B\mathrm{\ and\ }(\exists n)\ r_n(A)=a\}.
\end{equation}

For $a\in\mathcal{AR}$, let $|a|$ denote the length of the sequence $a$.  Thus, $|a|$ equals the integer $k$ for which $a=r_k(a)$.
For $a,b\in\mathcal{AR}$, $a\sqsubseteq b$ if and only if $a=r_m(b)$ for some $m\le |b|$.
$a\sqsubset b$ if and only if $a=r_m(b)$ for some $m<|b|$.
For each $n<\om$, $\mathcal{AR}_n=\{r_n(A):A\in\mathcal{R}\}$.
\vskip.1in

\begin{enumerate}
\item[\bf A.1]\rm
\begin{enumerate}
\item
$r_0(A)=\emptyset$ for all $A\in\mathcal{R}$.\vskip.05in
\item
$A\ne B$ implies $r_n(A)\ne r_n(B)$ for some $n$.\vskip.05in
\item
$r_n(A)=r_m(B)$ implies $n=m$ and $r_k(A)=r_k(B)$ for all $k<n$.\vskip.1in
\end{enumerate}
\item[\bf A.2]\rm
There is a quasi-ordering $\le_{\mathrm{fin}}$ on $\mathcal{AR}$ such that\vskip.05in
\begin{enumerate}
\item
$\{a\in\mathcal{AR}:a\le_{\mathrm{fin}} b\}$ is finite for all $b\in\mathcal{AR}$,\vskip.05in
\item
$A\le B$ iff $(\forall n)(\exists m)\ r_n(A)\le_{\mathrm{fin}} r_m(B)$,\vskip.05in
\item
$\forall a,b,c\in\mathcal{AR}[a\sqsubset b\wedge b\le_{\mathrm{fin}} c\ra\exists d\sqsubset c\ a\le_{\mathrm{fin}} d]$.\vskip.1in
\end{enumerate}
\end{enumerate}

The number $\depth_B(a)$ is the least $n$, if it exists, such that $a\le_{\mathrm{fin}}r_n(B)$.
If such an $n$ does not exist, then we write $\depth_B(a)=\infty$.
If $\depth_B(a)=n<\infty$, then $[\depth_B(a),B]$ denotes $[r_n(B),B]$.

\begin{enumerate}
\item[\bf A.3] \rm
\begin{enumerate}
\item
If $\depth_B(a)<\infty$ then $[a,A]\ne\emptyset$ for all $A\in[\depth_B(a),B]$.\vskip.05in
\item
$A\le B$ and $[a,A]\ne\emptyset$ imply that there is $A'\in[\depth_B(a),B]$ such that $\emptyset\ne[a,A']\sse[a,A]$.\vskip.1in
\end{enumerate}
\end{enumerate}

If $n>|a|$, then  $r_n[a,A]$ denotes the collection of all $b\in\mathcal{AR}_n$ such that $a\sqsubset b$ and $b\le_{\mathrm{fin}} A$.

\begin{enumerate}
\item[\bf A.4]\rm
If $\depth_B(a)<\infty$ and if $\mathcal{O}\sse\mathcal{AR}_{|a|+1}$,
then there is $A\in[\depth_B(a),B]$ such that
$r_{|a|+1}[a,A]\sse\mathcal{O}$ or $r_{|a|+1}[a,A]\sse\mathcal{O}^c$.\vskip.1in
\end{enumerate}

The  {\em Ellentuck topology} on $\mathcal{R}$ is the topology generated by the basic open sets
$[a,B]$;
it extends the usual metrizable topology on $\mathcal{R}$ when we consider $\mathcal{R}$ as a subspace of the Tychonoff cube $\mathcal{AR}^{\bN}$.
Given the Ellentuck topology on $\mathcal{R}$,
the notions of nowhere dense, and hence of meager are defined in the natural way.
We  say that a subset $\mathcal{X}$ of $\mathcal{R}$ has the {\em property of Baire} iff $\mathcal{X}=\mathcal{O}\cap\mathcal{M}$ for some Ellentuck open set $\mathcal{O}\sse\mathcal{R}$ and Ellentuck meager set $\mathcal{M}\sse\mathcal{R}$.
A subset $\mathcal{X}$ of $\mathcal{R}$ is {\em Ramsey} if for every $\emptyset\ne[a,A]$,
there is a $B\in[a,A]$ such that $[a,B]\sse\mathcal{X}$ or $[a,B]\cap\mathcal{X}=\emptyset$.
$\mathcal{X}\sse\mathcal{R}$ is {\em Ramsey null} if for every $\emptyset\ne [a,A]$, there is a $B\in[a,A]$ such that $[a,B]\cap\mathcal{X}=\emptyset$.

\begin{defn}[\cite{TodorcevicBK10}]\label{defn.5.2}
A triple $(\mathcal{R},\le,r)$ is a {\em topological Ramsey space} if every subset of $\mathcal{R}$  with the property of Baire  is Ramsey and if every meager subset of $\mathcal{R}$ is Ramsey null.
\end{defn}

The following result can be found as Theorem
5.4 in \cite{TodorcevicBK10}.

\begin{thm}[Abstract Ellentuck Theorem]\label{thm.AET}\rm \it
If $(\mathcal{R},\le,r)$ is closed (as a subspace of $\mathcal{AR}^{\bN}$) and satisfies axioms {\bf A.1}, {\bf A.2}, {\bf A.3}, and {\bf A.4},
then every  subset of $\mathcal{R}$ with the property of Baire is Ramsey,
and every meager subset is Ramsey null;
in other words,
the triple $(\mathcal{R},\le,r)$ forms a topological Ramsey space.
\end{thm}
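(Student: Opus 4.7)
The plan is to adapt Ellentuck's original combinatorial-forcing argument to the abstract setting, using axioms \textbf{A.1}--\textbf{A.4} as the only tools. The organizing concept is that of \emph{combinatorial forcing} relative to a family $\mathcal{F} \sse \mathcal{AR}$: say that $B \in \mathcal{R}$ \emph{accepts} $a \in \mathcal{AR}$ if every $X \in [a,B]$ has some $r_n(X) \in \mathcal{F}$; say $B$ \emph{rejects} $a$ if no $B' \in [\depth_B(a), B]$ accepts $a$; say $B$ \emph{decides} $a$ if it does one or the other. Axioms \textbf{A.1} and \textbf{A.3} are what make these notions behave well under passage to $[\depth_B(a), B]$.

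The first technical step is a pigeonhole lemma: for any $a \in \mathcal{AR}$ and any $B$ with $\depth_B(a) < \infty$, there is $B' \in [\depth_B(a), B]$ that decides $a$. If $B$ rejects $a$, we are done; otherwise one applies \textbf{A.4} to the family $\mathcal{O} \sse \mathcal{AR}_{|a|+1}$ consisting of those $b$ such that some $A \in [\depth_B(b), B]$ accepts $b$, using \textbf{A.3}(b) to move into the correct $[\depth_B(a),B]$. The second step is a fusion/diagonalization: given $A \in \mathcal{R}$ and $\mathcal{F}$, construct a decreasing sequence $A = A_0 \ge A_1 \ge \cdots$ with $r_n(A_n) = r_n(A_{n+1})$ by, at stage $n$, enumerating the finitely many $a \le_{\fin} r_n(A_n)$ (finiteness by \textbf{A.2}(a)) and thinning $A_n$ using the pigeonhole lemma once for each such $a$. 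Closedness of $\mathcal{R}$ in $\mathcal{AR}^{\bN}$ then guarantees that the coherent limit $C = \lim_n r_n(A_n)$ lies in $\mathcal{R}$, and by construction $C \le A$ and $C$ decides every $a \le_{\fin} C$.

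With such a $C$ in hand, a further appeal to \textbf{A.4} (iterated level by level) shows that whenever $C$ accepts $a$, it accepts $r_{|a|+1}(X)$ for every $X \in [a, C']$ for some further $C' \in [\depth_C(a), C]$; this lets one propagate acceptance and conclude that either $[\emptyset, C'] \sse \{X : \exists n\ r_n(X) \in \mathcal{F}\}$ or $[\emptyset, C'] \cap \{X : \exists n\ r_n(X) \in \mathcal{F}\} = \emptyset$. This establishes the Ramsey property for every set of the form $\mathcal{X}_{\mathcal{F}} = \{X \in \mathcal{R} : (\exists n)\ r_n(X) \in \mathcal{F}\}$. Since these sets form a basis for the metric topology on $\mathcal{R}$ inherited from $\mathcal{AR}^{\bN}$, every metrically open set is Ramsey. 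From this, given a nowhere-dense (in the Ellentuck topology) set $\mathcal{N}$ and a basic $[a,B] \ne \emptyset$, a single application of the Ramsey property to the metric closure of $\mathcal{N}$ inside $[a,B]$, followed by shrinking away from $\mathcal{N}$ inside each $[r_{|a|+1}(X),\cdot]$, gives $B' \in [a,B]$ with $[a,B'] \cap \mathcal{N} = \emptyset$; hence nowhere-dense sets, and therefore all Ellentuck-meager sets, are Ramsey null.

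The Baire-property conclusion then follows by writing $\mathcal{X} = \mathcal{O} \sd \mathcal{M}$ with $\mathcal{O}$ Ellentuck open and $\mathcal{M}$ meager: first shrink $B$ to kill $\mathcal{M}$ using Ramsey-nullity, then apply the Ramsey property for $\mathcal{O}$ (which is a union of basic $[a',B']$, so a very simple instance of $\mathcal{X}_{\mathcal{F}}$). I expect the main obstacle to be the fusion step: one has to verify carefully that the countably many pigeonhole refinements can be threaded together so that each $A_{n+1}$ lies in $[\depth_{A_n}(r_n(A_n)), A_n]$ (so that \textbf{A.1}(c) yields a coherent sequence of approximations), and that the diagonal object $C$ both belongs to $\mathcal{R}$ (here closedness is essential) and majorizes the thinnings in the $\le_{\fin}$ sense required by \textbf{A.2}(b). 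Everything else is a careful but routine unwinding of the axioms.
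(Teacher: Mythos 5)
The paper does not actually prove this theorem; it quotes it as Theorem 5.4 of \cite{TodorcevicBK10}, so there is no in-paper argument to compare against. Your outline is, in its architecture, a faithful reconstruction of the standard combinatorial-forcing proof from that source: accept/reject/decide relative to a family $\mathcal{F}\sse\mathcal{AR}$, a decision lemma, a fusion using {\bf A.2}(a) and closedness of $\mathcal{R}$ in $\mathcal{AR}^{\bN}$ to get $C$ deciding every $a\lefin r_n(C)$, and then the descent through nowhere dense, meager, and Baire-property sets. That is the right plan, and the concerns you raise about threading the fusion are exactly the right ones.

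Two steps are misstated in a way that matters. First, the roles of acceptance and rejection are reversed: if $C$ accepts $a$ then it accepts every $b\in r_{|a|+1}[a,C]$ trivially (any $X\in[b,C]$ lies in $[a,C]$), and no appeal to {\bf A.4} is needed there; likewise the decision lemma needs no {\bf A.4}, since ``$B$ does not reject $a$'' literally means some $B'\in[\depth_B(a),B]$ accepts $a$. Where {\bf A.4} is genuinely needed is to show that if $C$ rejects $a$ then some $C'\in[\depth_C(a),C]$ rejects every $b\in r_{|a|+1}[a,C']$; iterating that through all levels is what yields the second alternative $[a,C']\cap\mathcal{X}_{\mathcal{F}}=\emptyset$, which your ``propagate acceptance'' step does not deliver. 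Second, the parenthetical claim that an Ellentuck-open set is ``a very simple instance of $\mathcal{X}_{\mathcal{F}}$'' is false: membership in a basic set $[a',B']$ requires $X\le B'$, which is not determined by any finite approximation, so Ellentuck-open sets are in general not metrically open. The Baire-property step therefore needs a separate (but parallel) run of the combinatorial forcing against the Ellentuck-open set $\mathcal{O}$ itself --- e.g.\ declaring that $Y$ accepts $b$ iff $[b,Y]\sse\mathcal{O}$, with {\bf A.3}(b) used to localize --- or the Galvin--Prikry-style case split on whether $\mathcal{X}$ is meager in $[a,B]$. Both repairs are standard, but as written those two steps would not go through.
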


\begin{example}[The Ellentuck Space]\label{ex.Ell}
Putting the Ellentuck space into the notation just defined,
$\mathcal{R}=[\om]^{\om}$ and the partial ordering  $\le $ is  simply $\sse$.
Given $X\in[\om]^{\om}$,
listing the members of $X$ in increasing order as $\{x_i:i<\om\}$,
the $n$-th approximation to $X$ is $r_n(X)=\{x_i:i<n\}$.
In particular, $r_0(X)=\emptyset$.
$\mathcal{AR}_n=[\om]^n$ and $\mathcal{AR}=[\om]^{<\om}$.
For $a,b\in[\om]^{<\om}$, one may define
$a\le_{\mathrm{fin}} b$ if and only if $a\sse b$.
With  these definitions, $([\om]^{<\om},\sse,r)$ can be seen to satisfy \bf Axioms A.1 \rm - \bf A.4\rm, and it is recommended that the reader check this to build a basis for understanding of how the axioms work.
\end{example}

For the Ellentuck space, the definition of $\le_{\mathrm{fin}}$ is a bit flexible:
An alternate definition,  $a\le_{\mathrm{fin}}b$ if and only if $a\sse b$ and $\max(a)=\max(b)$, also
satisfies  the  \bf Axioms A.1 \rm - \bf A.4\rm.
In some spaces, the definition of $\le_{\mathrm{fin}}$ can be
more particular, for instance Milliken's space of strong trees.
However, for all the spaces in this article, except for Subsection \ref{subsec.finfin},
the definition of $\le_{\mathrm{fin}}$ is also flexible in a similar manner as for the Ellentuck space.


\subsection{Almost reduction, forced ultrafilters, and complete combinatorics}\label{subsec.almostred}

Let $(\mathcal{R},\le,r)$ be any topological Ramsey space.
The ordering $\le$ can be weakened to form a $\sigma$-closed order as follows.

\begin{defn}[Mijares, \cite{Mijares07}]\label{def.almostred}
For $X,Y\in\mathcal{R}$,
write $Y\le^* X$  if there is an $a\in\mathcal{AR}|Y$ such that $[a,Y]\sse [a,X]$.
In this case, we say that $Y$ is an {\em almost reduction} of $X$.
\end{defn}

In the Ellentuck space, $\le^*$ is simply $\sse^*$
where $A\sse^* B$ if and only if $A\setminus B$ is finite.
Mijares proved in \cite{Mijares07} that $(\mathcal{R},\le^*)$ is a $\sigma$-closed partial ordering.
Forcing with $(\mathcal{R},\le^*)$ yields a generic filter $\mathcal{U}$ on $\mathcal{R}$.
This filter induces an ultrafilter on the countable base set of first-approximations,
 $\mathcal{AR}_1:=\{r_1(X):X\in\mathcal{R}\}$,
as follows.

\begin{defn}[The ultrafilter on first approximations]\label{defn.uar1}
Let $\mathcal{U}$ be a generic filter for the forcing $(\mathcal{R},\le^*)$,
where $(\mathcal{R},\le,r)$ is some topological Ramsey space. $\mathcal{U}\re\mathcal{AR}_1$ denotes  the filter on base set $\mathcal{AR}_1$ generated by the sets
\begin{equation}
\mathcal{AR}_1|X:=\{r_1(Y):Y\le X\}, \ \  X\in\mathcal{U}.
\end{equation}
\end{defn}

\begin{fact}[Mijares, \cite{Mijares07}]\label{thm.uar1uf}
For $\mathcal{U}$  generic for $(\mathcal{R},\le^*)$,
$\mathcal{U}\re\mathcal{AR}_1$ is an ultrafilter.
\end{fact}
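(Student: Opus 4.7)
The strategy is to break the statement $\mathcal{U}\re\mathcal{AR}_1$ being an ultrafilter into (a) it is a filter on $\mathcal{AR}_1$, and (b) it decides every subset of $\mathcal{AR}_1$. The combinatorial heart will be Axiom \textbf{A.4} applied at level $a=\emptyset$, where $|a|+1=1$ and the axiom delivers homogeneity precisely on $\mathcal{AR}_1$; everything else will be a density/genericity argument in the $\sigma$-closed order $(\mathcal{R},\le^*)$ established by Mijares.

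For (a), the generating family $\{\mathcal{AR}_1|X:X\in\mathcal{U}\}$ has each member nonempty (it contains $r_1(X)$), so I only need the finite intersection property. Given $X_0,\dots,X_{n-1}\in\mathcal{U}$, the $\le^*$-filter property of $\mathcal{U}$ yields $Z\in\mathcal{U}$ almost reducing each $X_i$, witnessed by approximations $a_i\in\mathcal{AR}|Z$ with $[a_i,Z]\sse[a_i,X_i]$. A standard refinement step using \textbf{A.2} and \textbf{A.3}, lengthening the $a_i$ to a common initial segment $a$ of $Z$ and passing into $[a,Z]$, produces a further $W\in\mathcal{U}$ with $W\le X_i$ outright for every $i<n$. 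Then $r_1(W)\in\bigcap_{i<n}\mathcal{AR}_1|X_i$, so this intersection is nonempty; closure under superset comes directly from the definition of generated filter.

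For (b), fix $\mathcal{S}\sse\mathcal{AR}_1$ and let
\[
\mathcal{D}_{\mathcal{S}}=\{X\in\mathcal{R}:\mathcal{AR}_1|X\sse\mathcal{S}\text{ or }\mathcal{AR}_1|X\cap\mathcal{S}=\emptyset\}.
\]
The plan is to show $\mathcal{D}_{\mathcal{S}}$ is dense in $(\mathcal{R},\le^*)$. Given any $B\in\mathcal{R}$, observe $\depth_B(\emptyset)=0$ since $r_0(B)=\emptyset$, so \textbf{A.4} with $a=\emptyset$ and $\mathcal{O}=\mathcal{S}$ produces $A\in[\emptyset,B]$ (so $A\le B$, hence $A\le^* B$) such that $r_1[\emptyset,A]$ is either contained in $\mathcal{S}$ or disjoint from it. Identifying $r_1[\emptyset,A]$ with $\mathcal{AR}_1|A$ places $A\in\mathcal{D}_{\mathcal{S}}$, so genericity supplies $X\in\mathcal{U}\cap\mathcal{D}_{\mathcal{S}}$, which forces $\mathcal{S}$ or $\mathcal{AR}_1\setminus\mathcal{S}$ into $\mathcal{U}\re\mathcal{AR}_1$.

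The main obstacle will be the bookkeeping identifying $r_1[\emptyset,A]$ with $\mathcal{AR}_1|A$: by definition the former is $\{b\in\mathcal{AR}_1:b\le_{\fin}A\}$ while the latter is $\{r_1(Y):Y\le A\}$, and proving both inclusions requires unpacking \textbf{A.2}(b), \textbf{A.3}(a), and \textbf{A.1}(c) to move between $\le_{\fin}$-approximations and genuine $\le$-descendants. Once this identification (routine for the Ellentuck space and the other concrete spaces in this article) is in place, both density arguments drop out immediately from the axioms.
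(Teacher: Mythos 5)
Your proposal is correct and follows essentially the same route as the paper: a density argument showing that for each $\mathcal{S}\sse\mathcal{AR}_1$ the set of $A$ with $\mathcal{AR}_1|A$ homogeneous for $\mathcal{S}$ is dense in $(\mathcal{R},\le^*)$, combined with genericity of $\mathcal{U}$. The only difference is that the paper obtains the homogenization by applying the Abstract Ellentuck Theorem to the clopen set induced by $\mathcal{S}$, whereas you invoke Axiom \textbf{A.4} directly at $a=\emptyset$ (together with the routine identification of $r_1[\emptyset,A]$ with $\mathcal{AR}_1|A$), which is a harmless and slightly more economical substitution; your explicit verification of the finite intersection property in part (a) is a detail the paper leaves implicit.
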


Since $\mathcal{U}$ is generic and since any subset of $\mathcal{F}\sse \mathcal{AR}_1$
induces the clopen (and hence property of Baire) set $\{Y\in\mathcal{R}:\exists a\in\mathcal{F}\, (a\sqsubset Y)\}$,
the Abstract Ellentuck Theorem along with
a density argument shows that
there is a member $X\in\mathcal{U}$ such that either $\mathcal{AR}_1|X\sse\mathcal{F}$ or else $\mathcal{AR}_1|X\cap\mathcal{F}=\emptyset$.
This shows that $\mathcal{U}\re \mathcal{AR}_1$ is an ultrafilter.

Such generic $\mathcal{U}$   and their induced ultrafilters $\mathcal{U}\re\mathcal{AR}_1$ satisfy strong properties.
First, they are  essentially p-points, but with respect to the space with its finite approximation structure from which they arise rather than with respect to $\om$.
By this, we mean that
for any  sequence $\lgl X_i:i<\om\rgl$ in $\mathcal{U}$ such that $X_i\ge^* X_{i+1}$ for all $i$, there is an $X\in \mathcal{U}$ such that $X\le^* X_i$ for each $i<\om$.
Such an $X$ is  pseudo-intersection of the sequence.
This often, though not always, will imply that $\mathcal{U}\re\mathcal{AR}_1$ is a p-point in the traditional sense, meaning that for each
 countable set of members $U_i\in\mathcal{U}\re\mathcal{AR}_1$,
there is a member $X\in\mathcal{U}\re\mathcal{AR}_1$ such that
 for each $i<\om$,
 all but finitely many members of  $X$ are  contained in  $U_i$.

The second and stronger property satisfied by ultrafilters forced by $(\mathcal{R},\le^*)$ is the analogue of selectivity, with respect to the topological Ramsey space.
In \cite{DiPrisco/Mijares/Nieto15},
the general definition of semiselective coideal is presented.
Here we shall only concentrate on when  the  generic filter $\mathcal{U}$ is selective.
The following definition is the natural abstraction of the formulation of selectivity due to  Mathias in \cite{Mathias77}.

\begin{defn}[\cite{DiPrisco/Mijares/Nieto15}]
\label{defn.seimselective}
Let $\mathcal{U}\sse\mathcal{R}$ be a generic filter  forced by $(\mathcal{R},\le^*)$.
Given a family $\mathcal{A}=\{A_a\}_{a\in\mathcal{AR}|A}\sse\mathcal{R}$,
we say that $Y\in\mathcal{R}$ is a {\em diagonalization} of $\mathcal{A}$ if for each $a\in\mathcal{AR}|Y$ we have $[a,Y]\sse[a,A_a]$.

A set $\mathcal{D}\sse$ is said to be {\em dense open} in $\mathcal{U}\cap [\depth_X(a),X]$
if
\begin{enumerate}
\item
(Density) For all $A\in  \mathcal{U}\cap [\depth_X(a),X]$, there is a $B\in\mathcal{D}$ such that $B\le A$;
\item
(Open)
For all $A\in  \mathcal{U}\cap [\depth_X(a),X]$,
for all  $B\in\mathcal{D}$, $A\le B$ implies $A\in\mathcal{D}$.
\end{enumerate}

Given $X\in\mathcal{U}$ and a collection $\vec{\mathcal{D}}=
\{\mathcal{D}_a\}_{a\in\mathcal{AR}|X}$ such that each $\mathcal{D}_a$ is open dense in
$\mathcal{U}\cap[\depth_X(a),X]$,
we say that $Y\in\mathcal{R}$
is a {\em diagonalization} of $\vec{\mathcal{D}}$ if there is a family $\mathcal{A}=\{A_a\}_{a\in\mathcal{AR}|X}$,
with $A_a\in\mathcal{D}_a$, such that
$Y$ is a diagonalization of $\mathcal{A}$.
\end{defn}

\begin{defn}[Abstract Selectivity, \cite{DiPrisco/Mijares/Nieto15}]\label{defn.selective}
A maximal filter $\mathcal{G}\sse\mathcal{R}$ is {\em selective} if
for each $A\in\mathcal{G}$ and each collection $\vec{\mathcal{D}}=
\{\mathcal{D}_a\}_{a\in\mathcal{AR}|A}$ such that each $\mathcal{D}_a$ is dense open in $\mathcal{G}\cap [\depth_A(a),A]$ and each $B\in\mathcal{G}|A$,
there is a $C\in\mathcal{G}$ such that $C$ is a diagonalization of $\mathcal{D}$ and $C\le B$.
\end{defn}

The following may be shown by  standard arguments, using genericity.

\begin{fact}
Each filter $\mathcal{U}\sse\mathcal{R}$ generic for $(\mathcal{R},\le^*)$ is selective.
\end{fact}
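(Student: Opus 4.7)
The plan is to verify abstract selectivity (Definition~5.6) by a fusion-style construction inside $V[\mathcal{U}]$, combining Mijares' $\sigma$-closedness of $(\mathcal{R},\le^*)$ with axioms \textbf{A.2(a)} and \textbf{A.3(b)} and the genericity of $\mathcal{U}$. Fix $A\in\mathcal{U}$, $B\in\mathcal{U}|A$, and $\vec{\mathcal{D}}=\{\mathcal{D}_a\}_{a\in\mathcal{AR}|A}$ with each $\mathcal{D}_a$ dense open in $\mathcal{U}\cap[\depth_A(a),A]$. I would first enumerate $\mathcal{AR}|A=\{a_n:n<\om\}$ with $|a_n|\le n$, which is possible because each $\mathcal{AR}_k|A$ is finite by \textbf{A.2(a)}.

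The core of the proof is an inductive construction of a $\le^*$-descending sequence $B=B_0\ge^* B_1\ge^* B_2\ge^* \cdots$ inside $\mathcal{U}$ together with choices $A_{a_n}\in\mathcal{D}_{a_n}\cap\mathcal{U}$ satisfying (i) the fusion condition $r_n(B_{n+1})=r_n(B_n)$ and (ii) the inclusion $[a_n,B_{n+1}]\sse[a_n,A_{a_n}]$. At stage $n$, density of $\mathcal{D}_{a_n}$ in $\mathcal{U}\cap[\depth_A(a_n),A]$ allows me to pick $A_{a_n}\in\mathcal{D}_{a_n}\cap\mathcal{U}$; directedness of the filter $\mathcal{U}$ yields a common $\le^*$-refinement $B'\in\mathcal{U}$ of $B_n$ and $A_{a_n}$; finally, axiom \textbf{A.3(b)} is applied to restrict $B'$ inside $\mathcal{U}$ to an element $B_{n+1}$ that both preserves the initial approximation $r_n(B_n)$ and realizes (ii). Mijares' $\sigma$-closedness of $(\mathcal{R},\le^*)$ then provides a common $\le^*$-lower bound of the sequence, and genericity of $\mathcal{U}$, applied to the dense set of such lower bounds, places one such bound $C$ inside $\mathcal{U}$.

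To finish, I would verify that $C$ diagonalizes $\{A_a:a\in\mathcal{AR}|A\}$: for each $a\in\mathcal{AR}|C$ there is a unique $n$ with $a=a_n$, and the fusion condition guarantees $r_n(C)=r_n(B_n)$, so the witness of $C\le^* B_{n+1}$ can be taken to be an initial segment of $a$, yielding $[a,C]\sse[a,B_{n+1}]\sse[a,A_a]$ via (ii). The main obstacle is executing the inductive step coherently: $B_{n+1}$ must simultaneously lie in the generic $\mathcal{U}$, satisfy the fusion equation $r_n(B_{n+1})=r_n(B_n)$, and realize $[a_n,B_{n+1}]\sse[a_n,A_{a_n}]$, which requires a careful interleaving of \textbf{A.3(b)} with the filter structure of $\mathcal{U}$. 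A secondary subtlety is that $\vec{\mathcal{D}}$ lives in $V[\mathcal{U}]$, so the construction must genuinely be carried out in the extension; it is genericity that makes both the individual selections $A_{a_n}\in\mathcal{U}$ and the fusion limit $C\in\mathcal{U}$ available.
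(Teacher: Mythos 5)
The paper does not actually supply a proof of this fact; it is dispatched with the remark that it ``may be shown by standard arguments, using genericity.'' Your fusion outline is indeed the natural shape of such a standard argument, so the comparison here is against what would make that sketch airtight --- and as written there is a genuine gap at the final step. Genericity of $\mathcal{U}$ only guarantees that $\mathcal{U}$ meets dense subsets of $(\mathcal{R},\le^*)$ that lie in the \emph{ground model}, whereas your ``dense set of such lower bounds'' is (a) not literally dense in $(\mathcal{R},\le^*)$ (conditions incompatible with $B_0$ have no extension below every $B_n$), and (b) defined from the sequence $(B_n)$ and the family $\vec{\mathcal{D}}$, both of which you construct in $V[\mathcal{U}]$. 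The repair is standard but must be said: since $(\mathcal{R},\le^*)$ is $\sigma$-closed, the $\om$-sequence $(B_n)_{n<\om}$ of ground-model elements lies in $V$; one then fixes a name for it and applies genericity to the $V$-dense set of conditions that either decide the entire sequence and lie $\le^*$-below all of its terms, or force it not to be a $\le^*$-decreasing sequence through $\dot{\mathcal{U}}$. Cleaner still is to run the whole construction in $V$: given an arbitrary condition $X$ and names for $A$, $B$, $\vec{\mathcal{D}}$, build the fusion below $X$ while deciding enough of the names, produce a single $C$ forcing ``$\check{C}$ is a diagonalization of $\dot{\vec{\mathcal{D}}}$ below $\dot{B}$,'' and conclude by density.

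The inductive step also deserves more than the acknowledgment that it is ``the main obstacle.'' Members of $\mathcal{U}$ are only pairwise $\le^*$-compatible, so the common refinement $B'$ of $B_n$ and $A_{a_n}$ satisfies only $[b,B']\sse[b,A_{a_n}]$ for \emph{some} $b\in\mathcal{AR}|B'$, which may be much longer than $a_n$; this does not by itself yield $[a_n,B_{n+1}]\sse[a_n,A_{a_n}]$, and {\bf A.3}(b) as stated shrinks $[a,A]$ from the inside rather than producing the containment you need at the level of $a_n$. You need to arrange the bookkeeping so that $a_n$ is treated only at a stage where $\depth_{B_n}(a_n)$ is below the part of $B_n$ still free to be modified (this is what your condition $|a_n|\le n$ is implicitly for, but the link between $|a_n|$, $\depth$, and the fusion equation $r_n(B_{n+1})=r_n(B_n)$ has to be made explicit), and so that the witness to $C\le^* A_{a_n}$ can be taken to occur at or before $a_n$ itself. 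With those two points addressed, your argument goes through.
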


We now present the
version of complete combinatorics for topological Ramsey spaces.
This  result of Di Prisco, Mijares and Nieto  is in fact  more general,  pertaining to all semiselective coideals.
Here we only mention the case for  generic filters.

\begin{thm}[Di Prisco/Mijares/Nieto, \cite{DiPrisco/Mijares/Nieto15}]
\label{thm.completecomb}
If there exists a supercompact cardinal and $\mathcal{G}\sse\mathcal{R}$ is generic for $(\mathcal{R},\le^*)$,
then all definable subsets of $\mathcal{R}$ are $\mathcal{G}$-Ramsey.
Hence, each filter in $V$ on base set $\mathcal{R}$ which is selective is  generic for the forcing $(\mathcal{R},\le^*)$ over the Solovay model  $L(\mathbb{R})$.
\end{thm}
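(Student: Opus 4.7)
The plan is to mirror the classical Mathias--Todorcevic argument for Ramsey ultrafilters, replacing the Ellentuck space by the abstract space $(\mathcal{R},\le,r)$ and the classical Ellentuck Theorem by the Abstract Ellentuck Theorem (Theorem \ref{thm.AET}). The two halves are loosely separable: first prove the Ramsey-property half by pulling regularity down from $L(\mathbb{R})$, then deduce generic-ness of any selective filter by a density argument.

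First I would set up the large cardinal input. Under a supercompact cardinal, every set of reals in $L(\mathbb{R})$ is Lebesgue measurable, has the Baire property, and has the perfect set property in $V$ (Shelah--Woodin). Since $\mathcal{R}$ embeds into the Polish space $\mathcal{AR}^{\mathbb{N}}$, any definable subset $\mathcal{X}\sse\mathcal{R}$ (that is, definable in $L(\mathbb{R})$ from real parameters) inherits the property of Baire in the \emph{metric} topology on $\mathcal{R}$. The first real work is to upgrade this to the property of Baire in the \emph{Ellentuck} topology on $\mathcal{R}$. I would do this by a Mathias-style parametrized argument: given $\mathcal{X}$, build, by transfinite recursion through the definability of $\mathcal{X}$, a fusion/diagonalization inside $(\mathcal{R},\le^*)$ showing that on each basic open set $[a,A]$ one can thin $A$ to some $B\in[\depth_A(a),A]$ with $[a,B]\sse\mathcal{X}$ or $[a,B]\cap\mathcal{X}=\emptyset$. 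Equivalently, one can invoke the abstract parametrized Ramsey theorem of Di Prisco--Mijares--Nieto: $L(\mathbb{R})$-definable subsets of $\mathcal{R}$ have the Baire property in the Ellentuck topology, hence by Theorem \ref{thm.AET} they are Ramsey. This upgrade step is where I expect the main obstacle: the transfer from Polish-regularity to Ellentuck-regularity is what replaces Mathias' original combinatorial forcing argument, and here one needs to verify carefully that axioms \textbf{A.1}--\textbf{A.4} suffice to run the fusion without leaving $\mathcal{R}$.

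Next I would turn the Ramsey property into the $\mathcal{G}$-Ramsey property using selectivity of $\mathcal{G}$. Given a definable $\mathcal{X}\sse\mathcal{R}$ and $A\in\mathcal{G}$, for each $a\in\mathcal{AR}|A$ let
\begin{equation}
\mathcal{D}_a=\{B\le A: [a,B]\sse\mathcal{X}\ \text{or}\ [a,B]\cap\mathcal{X}=\emptyset\}.
\end{equation}
The previous step says each $\mathcal{D}_a$ is dense open in $[\depth_A(a),A]$, and one checks openness with respect to $\le^*$ routinely from \textbf{A.3}. By Definition \ref{defn.selective} applied to $\vec{\mathcal{D}}=\{\mathcal{D}_a:a\in\mathcal{AR}|A\}$, there is $C\in\mathcal{G}$ with $C\le A$ which is a diagonalization of $\vec{\mathcal{D}}$. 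Unwinding the definition of diagonalization, for each $a\in\mathcal{AR}|C$ one has $[a,C]\sse[a,A_a]\sse\mathcal{X}$ or $[a,C]\cap\mathcal{X}=\emptyset$; in particular, taking $a=\emptyset=r_0(C)$ gives a single $B\in\mathcal{G}\cap[\emptyset,A]$ with $[\emptyset,B]$ homogeneous for $\mathcal{X}$. This is exactly $\mathcal{G}$-Ramseyness, proving the first assertion.

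Finally, for the ``hence'' clause, let $\mathcal{F}\in V$ be a selective filter on $\mathcal{R}$ and let $\mathcal{D}\sse(\mathcal{R},\le^*)$ be open dense and lying in $L(\mathbb{R})$. I would code $\mathcal{D}$ as a definable subset $\mathcal{X}$ of $\mathcal{R}$ and apply the first part: the set of $B\in\mathcal{R}$ with $[\emptyset,B]\sse\mathcal{D}^c$ is, by density of $\mathcal{D}$ in $(\mathcal{R},\le^*)$, Ramsey null (any basic $[a,A]$ can be thinned into $\mathcal{D}$), so the $\mathcal{F}$-Ramsey alternative for $\mathcal{X}=\mathcal{D}$ must give some $B\in\mathcal{F}\cap\mathcal{D}$. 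Thus $\mathcal{F}$ meets every $L(\mathbb{R})$-dense open subset of $(\mathcal{R},\le^*)$, which is precisely genericity over $L(\mathbb{R})$. The only subtlety to double-check here is that the coding of $\mathcal{D}$ as a subset of $\mathcal{R}$ preserves definability and that ``dense in $\le^*$'' translates into ``Ramsey null in the complement'' in the abstract setting, both of which follow from axioms \textbf{A.2}, \textbf{A.3} together with $\sigma$-closure of $(\mathcal{R},\le^*)$ proved by Mijares.
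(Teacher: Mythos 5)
The paper does not actually prove this theorem: it is imported from Di Prisco--Mijares--Nieto \cite{DiPrisco/Mijares/Nieto15} as a black box (the surrounding text says explicitly that the result is more general and ``Here we only mention the case for generic filters''), so there is no in-paper proof to compare yours against. Judged on its own terms, your overall architecture is the expected one --- Shelah--Woodin absoluteness from the supercompact, an abstract Ellentuck/local-Ramsey step, and then selectivity used to diagonalize the dense open sets $\mathcal{D}_a$ into a single homogeneous $C\in\mathcal{G}$ --- and your second and third paragraphs are essentially sound modulo routine verifications (in the last step, note that $B\le^* A$ for $B\in\mathcal{D}$ does not literally put $B$ in $[\emptyset,A]$; one needs \textbf{A.3} and downward $\le$-closure of $\mathcal{D}$ to land inside the cone).

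The genuine gap is in your first step. The passage from ``definable sets have the property of Baire in the metric topology'' to ``definable sets have the property of Baire in the Ellentuck topology'' is not an upgrade you get for free; it is essentially the content of the theorem. Your two proposed remedies are either a fusion argument you do not carry out (and which, run inside $(\mathcal{R},\le^*)$ below members of $\mathcal{G}$, yields only the $\mathcal{G}$-local Ramsey property, not the outright Ellentuck--Baire property for all of $\mathcal{R}$) or an appeal to ``the abstract parametrized Ramsey theorem of Di Prisco--Mijares--Nieto,'' which is circular in this context. The mechanism actually used is forcing-theoretic: one forces with the Mathias-like poset $\mathbb{M}_{\mathcal{U}}$ of Theorem \ref{thm.DMNcool}, whose generic diagonalizes the filter, and uses the supercompact to make the relevant statements about $L(\mathbb{R})$-definable $\mathcal{X}$ (that some $[a,B]$ is homogeneous) generically absolute, then pulls homogeneity back to a condition in the filter. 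Finally, your ``hence'' clause applies the first assertion to an arbitrary selective filter $\mathcal{F}\in V$ rather than to a generic $\mathcal{G}$; that step needs the more general semiselective-coideal form of the theorem, which the paper alludes to but which your first paragraph, as written, does not establish.
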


For certain topological Ramsey spaces, the Ellentuck space in particular, it is known that for a maximal filter $\mathcal{G}$ on the space, the strong form of selectivity above is equivalent to being Ramsey, meaning that for each $n<\om$, for any partition of $\mathcal{AR}_n$ into two sets  there is a member in $\mathcal{G}$ homogenizing the partition.
However, it is still open whether these two notions are the same for any topological Ramsey space.
Some equivalents of  Ramsey are proved in Chapter 2 of \cite{TrujilloThesis} and in Section 7 of \cite{Dobrinen/Mijares/Trujillo14} for certain collections of topological Ramsey spaces.

We close this subsection by pointing out one more  important similarity of general topological Ramsey spaces with the Ellentuck space.

\begin{thm}[Di Prisco/Mijares/Nieto, \cite{DiPrisco/Mijares/Nieto15}]
\label{thm.DMNcool}
Let $(\mathcal{R},\le,r)$ be a topological Ramsey space and
$\mathcal{U}\sse\mathcal{R}$ be  a selective ultrafilter in a transitive model  $M$ of ZF + DCR.
Let
$\mathbb{M}_{\mathcal{U}}$
be the Mathias-like forcing  consisting of basic open sets $[a,X]$, where $X\in\mathcal{U}$.
Then  forcing over $M$ with $\mathbb{M}_{\mathcal{U}}$ adds a generic $g\in\mathcal{R}$ with the property that $g\le^* A$ for each $A\in\mathcal{U}$.
Moreover,
$B\in\mathcal{R}$ is $\mathbb{M}_{\mathcal{U}}$-generic over $M$ if and only if $B\le^* A$ for  all $A\in\mathcal{U}$.
Furthermore, $M[\mathcal{U}][g]=M[g]$.
\end{thm}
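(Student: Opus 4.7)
The plan is to adapt Mathias's classical argument for Mathias forcing below a Ramsey ultrafilter to the abstract setting, with the abstract selectivity of $\mathcal{U}$ from Definition~\ref{defn.selective} playing the role that Ramsey-ness plays in the classical proof.

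First I would verify existence of the generic $g$ and the relation $g\le^* A$ for every $A\in\mathcal{U}$. Given an $M$-generic $G$ on $\mathbb{M}_{\mathcal{U}}$, a standard density argument shows that for each $n$ there is a unique $a_n\in\mathcal{AR}_n$ with $[a_n,X]\in G$ for some $X$; by \textbf{A.1}--\textbf{A.2} and closure of $\mathcal{R}$ inside $\mathcal{AR}^{\mathbb{N}}$, the sequence $(a_n)$ amalgamates to a unique $g\in\mathcal{R}$ with $r_n(g)=a_n$. For each $A\in\mathcal{U}\cap M$, the collection $\mathcal{E}_A=\{[b,Y]\in\mathbb{M}_{\mathcal{U}}:Y\le A\}$ is dense: given any $[b,X]\in\mathbb{M}_{\mathcal{U}}$, directedness of $\mathcal{U}$ under $\le^*$ produces $Y\in\mathcal{U}$ with $Y\le^* X$ and $Y\le^* A$, and choosing $b'\sqsupseteq b$ with $b'\in\mathcal{AR}|Y$ of depth large enough that $Y$'s tail beyond $b'$ lies below both $X$ and $A$ yields $[b',Y]\subseteq[b,X]$ with $[b',Y]\in\mathcal{E}_A$. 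Meeting $\mathcal{E}_A$ forces a tail of $g$ to lie beneath $A$, so $g\le^* A$.

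The heart of the proof is the converse direction of the ``moreover'' claim: if $B\in\mathcal{R}$ satisfies $B\le^* A$ for every $A\in\mathcal{U}$, then $B$ is $\mathbb{M}_{\mathcal{U}}$-generic over $M$. Fix a dense open $\mathcal{D}\subseteq\mathbb{M}_{\mathcal{U}}$ in $M$ and some $A\in\mathcal{U}$. For each $a\in\mathcal{AR}|A$, set $\mathcal{D}_a=\{X\in\mathcal{U}\cap[\depth_A(a),A]:[a,X]\in\mathcal{D}\}$. Density of $\mathcal{D}$ in $\mathbb{M}_{\mathcal{U}}$, combined with the fact that $\mathcal{U}$ is cofinal in $\mathcal{R}$ under $\le^*$, shows that each $\mathcal{D}_a$ is dense open in $\mathcal{U}\cap[\depth_A(a),A]$ in the sense of Definition~\ref{defn.seimselective}. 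Abstract selectivity then produces $Y\in\mathcal{U}$ with $Y\le A$ that diagonalizes $\vec{\mathcal{D}}=\{\mathcal{D}_a\}_{a\in\mathcal{AR}|A}$, so that for every $a\in\mathcal{AR}|Y$ we have $[a,Y]\subseteq[a,X_a]$ for some $X_a\in\mathcal{D}_a$; by openness of $\mathcal{D}$ this upgrades to $[a,Y]\in\mathcal{D}$ for every $a\in\mathcal{AR}|Y$. Finally, the hypothesis $B\le^* Y$ gives $a\sqsubset B$ with $a\in\mathcal{AR}|Y$ and $[a,B]\subseteq[a,Y]$, so $B\in[a,Y]$ and hence the condition $[a,Y]\in\mathcal{D}$ belongs to the filter $G_B=\{[c,X]\in\mathbb{M}_{\mathcal{U}}:B\in[c,X]\}$ induced by $B$. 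Thus $G_B$ meets $\mathcal{D}$, establishing genericity.

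For $M[\mathcal{U}][g]=M[g]$ the inclusion $\supseteq$ is immediate, and the reverse reduces to exhibiting $\mathcal{U}$ as definable in $M[g]$ with parameters from $M$. The natural candidate is $\mathcal{U}=\{A\in\mathcal{R}^M:g\le^* A\}$: the inclusion $\subseteq$ is the first paragraph, and the reverse follows from maximality of $\mathcal{U}$, since any $A\in\mathcal{R}^M\setminus\mathcal{U}$ is incompatible (in $\le^*$) with some $A'\in\mathcal{U}$, and a density argument then shows $g\not\le^* A$. The principal obstacle I anticipate is the careful bookkeeping of depths in the middle paragraph: verifying rigorously that each $\mathcal{D}_a$ is dense open in $\mathcal{U}\cap[\depth_A(a),A]$ (rather than merely in the forcing), and that the diagonalization $Y$ delivered by abstract selectivity supports the passage from ``$[a,Y]\subseteq[a,X_a]$ with $X_a\in\mathcal{D}_a$'' to ``$[a,Y]\in\mathcal{D}$'' uniformly in $a$. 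This is precisely the point where the formulation of selectivity from \cite{DiPrisco/Mijares/Nieto15}, together with axiom \textbf{A.3}, must be invoked simultaneously.
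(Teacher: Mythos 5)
The paper states this theorem without proof --- it is quoted directly from \cite{DiPrisco/Mijares/Nieto15} as background --- so there is no in-paper argument to compare yours against; I can only assess your proposal on its own terms. Your overall strategy (adapt Mathias's classical characterization, with abstract selectivity standing in for Ramseyness) is the right one and is indeed the route taken in the cited source, and your first and last paragraphs are essentially sound modulo routine care with \textbf{A.3} and with what ``maximal filter'' buys you for recovering $\mathcal{U}$ from $g$.

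However, the heart of your argument has a genuine gap. You define $\mathcal{D}_a=\{X\in\mathcal{U}\cap[\depth_A(a),A]:[a,X]\in\mathcal{D}\}$ and assert that density of $\mathcal{D}$ in $\mathbb{M}_{\mathcal{U}}$ makes each $\mathcal{D}_a$ dense in $\mathcal{U}\cap[\depth_A(a),A]$. This is false: density of $\mathcal{D}$ below a condition $[a,C]$ only produces some $[a',X']\in\mathcal{D}$ with $a'$ a possibly \emph{longer} stem end-extending $a$; it does not produce a condition in $\mathcal{D}$ with the same stem $a$. Concretely, if $\mathcal{D}$ is the (dense open) set of all conditions $[b,X]$ with $|b|\ge 17$, then $\mathcal{D}_a=\emptyset$ for every $a$ of length less than $17$, so the appeal to abstract selectivity as you have set it up cannot even get started. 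The standard repair is Mathias's ``decided or undecidable'' device: replace $\mathcal{D}_a$ by the set of $X$ such that either $[a,X]\in\mathcal{D}$ or no $Y\le X$ in $\mathcal{U}$ has $[a,Y]\in\mathcal{D}$; this set genuinely is dense open in $\mathcal{U}\cap[\depth_A(a),A]$, and selectivity yields a diagonalization $Y$ deciding $\mathcal{D}$ at every stem. But then a second step is required which your proposal omits entirely: one must show that below $Y$ the ``decided positively'' stems capture every branch, i.e.\ that $Y$ can be shrunk to $Y'\in\mathcal{U}$ so that every $Z\le Y'$ has an initial segment $a$ with $[a,Y']\in\mathcal{D}$. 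This capturing step is where the Abstract Nash-Williams Theorem (Theorem~\ref{thm.ANW}), applied to the Nash-Williams family of minimal positively-decided approximations versus the possibility of an undecided branch, must be invoked; without it the hypothesis $B\le^* Y$ gives you an initial segment $a$ of $B$ in $\mathcal{AR}|Y$, but no guarantee that $[a,Y]\in\mathcal{D}$ for \emph{that} $a$. The obstacle you flagged (depth bookkeeping) is real but secondary; the missing Nash-Williams capturing argument is the essential content of the theorem.
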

For many further forcing properties  similar  Mathias forcing,
see  \cite{DiPrisco/Mijares/Nieto15}.


\subsection{Continuous cofinal maps}

We saw in Theorem \ref{thm.20} that p-points have continuous Tukey reductions, by which we mean that
any monotone cofinal map from a p-point to another ultrafilter is  continuous when restricted below some member of the p-point.
This is actually a special case of a more general phenomenon, which is seen in the topological Ramsey spaces in \cite{Dobrinen/Todorcevic14}, \cite{Dobrinen/Todorcevic15},  \cite{TrujilloThesis}, \cite{Dobrinen/Mijares/Trujillo14}, and \cite{DobrinenJSL15}.
Recall that the metric topology on  $\mathcal{R}$ is the one induced by the basic open cones  $\{X\in\mathcal{R}:a\sqsubset X\}$, for $a\in\mathcal{AR}$.
This is the same as  topology inherited by $\mathcal{R}$ viewed as a subspace
of  $\mathcal{AR}^{\mathbb{N}}$ with
the product topology.

The terminology  {\em basic} was introduced by Solecki and Todorcevic in \cite{Solecki/Todorcevic04} in a study of Tukey structures of analytic posets.
When treating an ultrafilter as poset partially ordered by reverse inclusion,
it turns out that p-points are exactly ultrafilters which are basic, as was proved in \cite{Dobrinen/Todorcevic11}.
Since {\em p-point} has a very well-established meaning, in order to avoid any confusion,
 we will  use the terminology {\em closed under almost reduction} to refer to a filter $\mathcal{C}$ on $\mathcal{R}$ such that for each  sequence $\lgl X_i:i<\om\rgl$ of members of $\mathcal{C}$  such that $X_i\ge^* X_{i+1}$ for each $i<\om$,
there is a diagonalization $Y\in\mathcal{C}$ such that for each $i<\om$, $Y\le^* X_i$.
This property was called `selective' in \cite{Mijares07} and for the next five years following that paper.
Work of Trujillo in \cite{Trujillo16}
served to distinguish the two notions for a large class of topological Ramsey spaces, and now when we use the word {\em selective} we  are always referring to Definition \ref{defn.selective}.

\begin{defn}[\cite{Dobrinen/Mijares/Trujillo14}]\label{defn.basic}
 Assume that $\mathcal{C}\subseteq \mathcal{R}$ is a filter on $(\mathcal{R},\le)$. \emph{$\mathcal{C}$ has basic Tukey reductions} if whenever $\mathcal{V}$ is a non-principal ultrafilter on $\omega$ and  $f:\mathcal{C} \rightarrow \mathcal{V}$ is a monotone cofinal map, there is an $X\in \mathcal{C}$, a
 monotone map $f^*:\mathcal{C} \rightarrow \mathcal{V}$
which is continuous with respect to the metric topology on $\mathcal{R}$,
 and a function $\check{f}: \mathcal{AR} \rightarrow [\omega]^{<\omega}$  such that
\begin{enumerate}
\item  $f \upharpoonright ( \mathcal{C} \upharpoonright X)$ is continuous with respect to the metric topology on $\mathcal{R}$.
\item $f^*$ extends $f \upharpoonright ( \mathcal{C} \upharpoonright X)$ to $\mathcal{C}$.
\item
\begin{enumerate}
\item
$s\sqsubseteq t \in \mathcal{AR}$ implies that $\check{f}(s) \sqsubseteq \check{f}(t);$
\item For each $Y\in \mathcal{C}$, $f^*( Y) = \bigcup_{k<\omega} \check{f}(r_{k}(Y))$; and
\item
$\check{f}$ is monotone: If $s, t \in \mathcal{AR}$ with $s\le_{\fin} t$, then $\check{f}(s) \subseteq \check{f}(t)$.
\end{enumerate}
\end{enumerate}
\end{defn}

The following  general theorem
 encompasses all known examples and 
provides a weak condition under which a maximal  filter $\mathcal{U}$  on $\mathcal{R}$ has basic Tukey reductions.
In the following, a member $A$ of  $\mathcal{U}$ is fixed, and for each $X\in\mathcal{U}$ such that $X\le A$,
$d(X)$ denotes the set $\{\depth_{A}(r_n(X)):n<\om\}$, the collection of the depths of the finite approximations to $X$ with respect to the fixed  $A$.
In the case of the Ellentuck space, each $d(X)$ is simply $X$.
It  very well may turn  out to be the case that  the requirement that  $\{d(X):X\in\mathcal{G}\}$ generates a nonprincipal ultrafilter on $\om$
is
  simply  true for all topological Ramsey spaces.
The following theorem   extends analogous theorems in \cite{Dobrinen/Todorcevic14} and \cite{Dobrinen/Todorcevic15},
and is attributed to Trujillo in \cite{Dobrinen/Mijares/Trujillo14},  where 
  a more general statement of the following  can be found.

\begin{thm}[Trujillo, \cite{Dobrinen/Mijares/Trujillo14}]\label{thm.tim}
Let  $(\mathcal{R},\le,r)$ be any topological Ramsey space.
If $\mathcal{U}$ is generic for the forcing $(\mathcal{R},\le^*)$
and $\{d(X):X\in\mathcal{U}\}$ generates a nonprincipal ultrafilter on $\om$,
then $\mathcal{U}$ has basic Tukey reductions.
\end{thm}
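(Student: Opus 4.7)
The plan is to adapt the template of the continuous-Tukey-reduction argument for the Ellentuck space (Theorem \ref{thm.20}) to the abstract setting, replacing $\sse$ by $\le$, initial segments by finite approximations $r_n$, and using the depth function to translate between approximations and natural numbers. Fix $A\in\mathcal{U}$ on which $f$ is defined, and work throughout below $A$. Since $\mathcal{V}$ is an ultrafilter on $\om$ and $f$ is monotone into $\mathcal{V}$, for each $n<\om$ the set $\mathcal{X}_n=\{Y\in\mathcal{R}:Y\le A,\ n\in f(Y)\}$ is upward-closed in $(\mathcal{R}|A,\le)$ and, being clopen in the metric topology on the fibers used to build $f(Y)$ from its approximations, has the property of Baire in the Ellentuck topology. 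Applying the Abstract Ellentuck Theorem \ref{thm.AET} to $\mathcal{X}_n$ inside each $[a,A]$ gives, for every $a\in\mathcal{AR}|A$ and every $n<\om$, a dense set
\[
D_{a,n}=\{B\in[\depth_A(a),A]:\text{for all }Y\in[a,B],\ n\in f(Y),\ \text{ or for all such }Y,\ n\notin f(Y)\}
\]
of conditions in $(\mathcal{R},\le^{*})$ below $A$.

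The next step is to fuse these countably many dense sets into a single $X\in\mathcal{U}$ with $X\le A$ meeting every $D_{a,n}$ with $a\sqsubset X$. Enumerate the pairs $(a,n)$ in order type $\om$ and build a $\le^{*}$-decreasing sequence $A=A_0\ge^{*} A_1\ge^{*}\cdots$ in $\mathcal{U}$ where $A_{k+1}\in D_{a_k,n_k}\cap\mathcal{U}$. The filter $\mathcal{U}$ being generic and $(\mathcal{R},\le^{*})$ being $\sigma$-closed (by Mijares) yields $X\in\mathcal{U}$ with $X\le^{*} A_k$ for every $k$. Because of axioms \textbf{A.2}--\textbf{A.3}, each initial approximation $a\sqsubset X$ has $a\le_{\fin} A_{k(a)}$ for all sufficiently large $k(a)$, so $X$ meets $D_{a,n}$ for every $a\sqsubset X$ and every $n<\om$. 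The hypothesis that $\{d(Y):Y\in\mathcal{U}\}$ generates a nonprincipal ultrafilter on $\om$ enters here: it guarantees that depths of approximations to $X$ are cofinal in $\om$, so there is no pair $(a,n)$ lost off the end of the fusion.

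With such $X$ in hand, define the finitary map
\[
\check{f}(a)=\{n<\depth_A(a):\text{every }Y\in\mathcal{R}|X\text{ with }a\sqsubseteq Y\text{ satisfies }n\in f(Y)\},
\]
and $f^{*}(Y)=\bigcup_{k<\om}\check{f}(r_k(Y))$ for $Y\in\mathcal{R}$. Conditions (3)(a)--(c) of Definition \ref{defn.basic} are immediate from the definition: $\check{f}$ respects $\sqsubseteq$ and $\le_{\fin}$ because the property defining membership in $\check{f}(a)$ is preserved when $a$ is enlarged in $\le_{\fin}$, and the depth truncation ensures $\check{f}$ has values in $[\om]^{<\om}$. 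For $Y\in\mathcal{U}|X$, one direction $f^{*}(Y)\sse f(Y)$ follows from applying monotonicity to each $n\in\check{f}(r_k(Y))$; the reverse direction $f(Y)\sse f^{*}(Y)$ is where the depth-ultrafilter hypothesis does its real work, since for each $n\in f(Y)$ the decision made by $D_{a,n}$ at the first approximation $a=r_k(Y)$ with $\depth_A(a)>n$ must be the positive one, witnessing $n\in\check{f}(r_k(Y))$. Continuity of $f^{*}$ in the metric topology is then built into the formula.

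The main obstacle will be the fusion into a single $X\in\mathcal{U}$: the $D_{a,n}$ are dense only below $[\depth_A(a),A]$, so the diagonalization needs to preserve membership in $\mathcal{U}$ while simultaneously respecting depth constraints for unboundedly many $a$. This is exactly where the depth-ultrafilter assumption and the $\sigma$-closure of $\le^{*}$ combine, via genericity of $\mathcal{U}$, to yield a simultaneous pseudointersection; a secondary technical point is that the decision made at $a$ must be coherent with that made at any $b\sqsupset a$, which is forced by monotonicity of $f$ together with \textbf{A.2}(c).
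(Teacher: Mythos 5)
The paper itself states this theorem without proof (it is quoted from \cite{Dobrinen/Mijares/Trujillo14}), so the comparison here is against the known argument, whose prototype is the proof of Theorem \ref{thm.20} sketched in Subsection \ref{subsection.ctscof}. Your overall architecture --- a decision for each pair $(a,n)$, a fusion into a single $X\in\mathcal{U}$, and a finitary $\check{f}$ read off from the decisions --- is the right shape. But the step on which everything rests is justified incorrectly. You apply the Abstract Ellentuck Theorem to $\mathcal{X}_n=\{Y: n\in f(Y)\}$, which requires $\mathcal{X}_n$ to have the property of Baire in the Ellentuck topology. The map $f$ is an arbitrary monotone cofinal map with no definability hypothesis, and your justification --- that $\mathcal{X}_n$ is ``clopen in the metric topology on the fibers used to build $f(Y)$ from its approximations'' --- presupposes exactly the continuity the theorem is meant to establish. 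The correct dichotomy needs no Ramsey theorem at all: for each $(a,n)$, either there is $B_{a,n}\in\mathcal{U}$ with $[a,B_{a,n}]\ne\emptyset$ such that $n\notin f(U)$ for every $U\in\mathcal{U}\cap[a,B_{a,n}]$, or else, for \emph{every} $V\in\mathcal{U}$ with $a\sqsubset V$, one finds $U\in\mathcal{U}\cap[a,V]$ with $n\in f(U)$ and monotonicity gives $n\in f(V)$. This asymmetric dichotomy (some condition forces the negative answer, or the positive answer already holds everywhere above $a$ in $\mathcal{U}$) is what makes the decision sets dense, and it comes from monotonicity plus excluded middle, not from the property of Baire. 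Relatedly, your $D_{a,n}$ and your $\check{f}$ quantify over all $Y\in[a,B]$ resp.\ all $Y\in\mathcal{R}\re X$, but $f$ is only defined on $\mathcal{U}$; the quantifiers must range over $\mathcal{U}\cap[a,B]$.

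Two further points need repair. First, the dense sets $D_{a,n}$ are defined from $f$, which lives in $V[\mathcal{U}]$, so you cannot meet them by ``genericity of $\mathcal{U}$'' --- genericity only speaks to ground-model dense sets. The fusion has to go through the fact that the generic filter is selective (closed under diagonalization of families indexed by $\mathcal{AR}\re A$, as in Definition \ref{defn.selective} and the Fact following it), which is itself proved from genericity by a ground-model density argument over names; this is where the diagonalization of the $B_{a,n}$ actually happens. Second, condition (3)(c) of Definition \ref{defn.basic} --- monotonicity of $\check{f}$ with respect to $\le_{\fin}$ --- is not ``immediate'': $a\le_{\fin}b$ does not imply $a\sqsubseteq b$, so the universally quantified property defining $\check{f}(a)$ does not transparently transfer to $b$; and the verification that $f^*$ maps all of $\mathcal{U}$ (not just $\mathcal{U}\re X$) into $\mathcal{V}$ requires a separate monotonicity argument that you omit. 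The hypothesis that $\{d(X):X\in\mathcal{U}\}$ generates a nonprincipal ultrafilter on $\om$ is doing more than preventing pairs from being ``lost off the end of the fusion''; it is what lets one control, level by level in terms of $\depth_A$, which integers are captured by $\check{f}(r_k(Y))$, and your account of its role is too vague to check.
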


\subsection{Barriers and canonical equivalence relations}

The notions of fronts and barriers on the Ellentuck space can be extended to  topological Ramsey spaces.
The Abstract Nash-Williams Theorem (Theorem 5.17  in \cite{TodorcevicBK10}) follows from the Abstract Ellentuck Theorem, just as the  Nash-Williams Theorem follows from  Ellentuck's Theorem.
For many arguments involving the Tukey  and Rudin-Keisler structures of forced ultrafilters, the full strength of the Abstract Ellentuck Theorem is not used, but rather  Abstract Nash-Williams Theorem suffices.

\begin{defn}[\cite{TodorcevicBK10}]\label{defn.5.16}
A family $\mathcal{F}\sse\mathcal{AR}$ of finite approximations is
\begin{enumerate}
\item
{\em Nash-Williams} if $a\not\sqsubseteq b$ for all $a\ne b$ in $\mathcal{F}$;
\item
{\em Sperner} if $a\not\le_{\mathrm{fin}} b$ for all $a\ne b$ in $\mathcal{F}$;
\item
{\em Ramsey} if for every partition $\mathcal{F}=\mathcal{F}_0\cup\mathcal{F}_1$ and every $X\in\mathcal{R}$,
there are $Y\le X$ and $i\in\{0,1\}$ such that $\mathcal{F}_i|Y=\emptyset$.
\end{enumerate}
\end{defn}

\begin{defn}\label{def.frontR1}
Suppose $(\mathcal{R},\le,r)$ is a closed triple that satisfies {\bf A.1} - {\bf A.4}.
Let $X\in\mathcal{R}$.
A family $\mathcal{F}\sse\mathcal{AR}$ is a {\em front} on $[0,X]$ if
\begin{enumerate}
\item
For each $Y\in[0,X]$, there is an $a\in \mathcal{F}$ such that $a\sqsubset Y$; and
\item
$\mathcal{F}$ is Nash-Williams.
\end{enumerate}
A family $\mathcal{B}\sse\mathcal{AR}$ is a {\em barrier} on $[0,X]$ if it satisfies (1) and also satisfies
\begin{enumerate}
\item[($2'$)]
$\mathcal{B}$ is {\em Sperner}.
\end{enumerate}
\end{defn}

\begin{thm}[Abstract Nash-Williams Theorem]\label{thm.ANW}
Suppose $(\mathcal{R},\le,r)$ is a closed triple that satisfies {\bf A.1} - {\bf A.4}. Then
every Nash-Williams family of finite approximations is Ramsey.
\end{thm}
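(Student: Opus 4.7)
The plan is to deduce the Abstract Nash-Williams Theorem directly from the Abstract Ellentuck Theorem (Theorem \ref{thm.AET}) by packaging each side of the partition as a metric-open set and then iterating the Ellentuck dichotomy. Fix a Nash-Williams family $\mathcal{F}\subseteq\mathcal{AR}$, a partition $\mathcal{F}=\mathcal{F}_0\cup\mathcal{F}_1$, and $X\in\mathcal{R}$. I would introduce, for each $i\in\{0,1\}$, the set
$$\mathcal{X}_i=\{Y\in\mathcal{R}:(\exists a\in\mathcal{F}_i)\ a\sqsubset Y\}=\bigcup_{a\in\mathcal{F}_i}\{Y\in\mathcal{R}:a\sqsubset Y\},$$
a union of basic metric-open cones. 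Since metric-open sets are Ellentuck-open and hence have the property of Baire, the Abstract Ellentuck Theorem applies to each $\mathcal{X}_i$.

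Applying the theorem first to $\mathcal{X}_0$ with the basic open set $[\emptyset,X]$, I obtain $Y_0\in[\emptyset,X]$ so that either $[\emptyset,Y_0]\subseteq\mathcal{X}_0$ or $[\emptyset,Y_0]\cap\mathcal{X}_0=\emptyset$. Applying it again to $\mathcal{X}_1$ inside $[\emptyset,Y_0]$ yields $Y_1\le Y_0$ with the analogous dichotomy for $\mathcal{X}_1$. Any outcome in which $[\emptyset,Y_1]\cap\mathcal{X}_i=\emptyset$ for some $i$ delivers the conclusion: if we had $a\in\mathcal{F}_i$ with $a\le_{\mathrm{fin}} Y_1$, then $\depth_{Y_1}(a)<\infty$ and axiom \textbf{A.3}(a) would produce some $Z\in[a,Y_1]\subseteq[\emptyset,Y_1]\cap\mathcal{X}_i$, a contradiction; hence $\mathcal{F}_i|Y_1=\emptyset$, and $Y_1\le X$ is the desired witness.

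The remaining case is $[\emptyset,Y_1]\subseteq\mathcal{X}_0\cap\mathcal{X}_1$, and this is where the Nash-Williams hypothesis must be used. For any $Z\in[\emptyset,Y_1]$ one finds $a\in\mathcal{F}_0$ and $b\in\mathcal{F}_1$ with $a\sqsubset Z$ and $b\sqsubset Z$. Axiom \textbf{A.1} implies that the finite approximations of $Z$ are linearly ordered by $\sqsubseteq$, so $a$ and $b$ are $\sqsubseteq$-comparable; since they are distinct (they lie in disjoint pieces of the partition), one is a proper $\sqsubset$-initial segment of the other, contradicting the Nash-Williams property of $\mathcal{F}$. Thus this case cannot arise, and the preceding case supplies the witness. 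The delicate point I expect to have to handle carefully is the bridge between the topological conclusion (some $\mathcal{X}_i$ misses $[\emptyset,Y_1]$) and the combinatorial conclusion ($\mathcal{F}_i|Y_1=\emptyset$, which refers to finite approximations $\le_{\mathrm{fin}} Y_1$ rather than to infinite elements of $\mathcal{R}$); without axiom \textbf{A.3}(a), a finite approximation could conceivably have $a\le_{\mathrm{fin}} Y_1$ without being extended to any actual $Z\in[a,Y_1]$, and the Ellentuck dichotomy would fail to translate into the required combinatorial statement.
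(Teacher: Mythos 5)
Your argument is correct and follows exactly the route the paper indicates: the paper gives no proof beyond remarking that the Abstract Nash-Williams Theorem ``follows from the Abstract Ellentuck Theorem, just as the Nash-Williams Theorem follows from Ellentuck's Theorem'' (citing Theorem 5.17 of \cite{TodorcevicBK10}), and your write-up is a correct, fully detailed realization of that derivation, including the genuinely necessary use of {\bf A.3}(a) to pass from the topological dichotomy to the combinatorial statement $\mathcal{F}_i|Y_1=\emptyset$.
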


It is also proved in \cite{TodorcevicBK10}
that whenever the quasi-ordering $\le_{\mathrm{fin}}$ is a partial ordering,
then an abstract version of Galvin's Lemma holds:
For each front $\mathcal{F}$ on $\mathcal{R}$, there is a member $Y\in\mathcal{R}$ such that $\mathcal{F}|Y:=\{a\in \mathcal{F}: a\in\mathcal{AR}|Y\}$
is a barrier on $[0,Y]$.
In all spaces that we have worked on so far, the quasi-order $\le_{\mathrm{fin}}$ actually is a partial ordering.
Thus, we are free to interchange usages of fronts with barriers as they do not affect any results.
Analogously to how $[\om]^k$ is the uniform barrier of rank $k$ on the Ellentuck space,
for any topological Ramsey space,
 $\mathcal{AR}_k$ is the uniform front of rank $k$.
One can define by recursion  fronts of all countable ranks on abstract topological Ramsey spaces, though the definition requires more care in the abstract setting.

 The theorem of \Pudlak\ and \Rodl\ in Subsection \ref{subsection.canonicaleqrel}
 canonizing equivalence relations on fronts  on $[\om]^{\om}$  generalizes to a large class of topological Ramsey spaces.
This has been seen in
\cite{Lefmann96} and more generally in
\cite{Klein/Spinas05}
for the Milliken space of infinite block sequences
and more recently in  \cite{Dobrinen/Todorcevic14}, \cite{Dobrinen/Todorcevic15}, \cite{Dobrinen/Mijares/Trujillo14}, \cite{DobrinenJSL15} and \cite{DobrinenJML16},
where new topological Ramsey spaces were constructed which are dense inside certain $\sigma$-closed forcings producing ultrafilters satisfying weak partition relations.
A sampling of the
 exact formulations
of these canonical equivalence relations will be explicated in the following sections.
The rest
are left for the reader to find in the original sources, as reproducing them here would require too much space.
The point we want to make is that such theorems for certain spaces and, in all these cases, have the general form that the canonical equivalence relations are essentially
given by projections to substructures.
If the Ramsey space has members which have  a tree-like structure, then the canonical equivalence relations  are defined by projections to subtrees.
If the members of the Ramsey space are sequences of ordered structures, then the canonical equivalence relations are defined by  projections to  substructures.
This behavior is what allows for the Rudin-Keisler structure inside the Tukey types to be deduced.

\subsection{Initial Tukey and Rudin-Keisler structures, and Rudin-Keisler structures inside Tukey types}

Recall from Subsection \ref{subsection.initialTRKstructure}
  that if $\mathcal{U}$ is a Ramsey ultrafilter and $\mathcal{V}\le_T\mathcal{U}$,
then
there is a front $\mathcal{F}$ and a function $g:\mathcal{F}\ra \om$ such that
$\mathcal{V}$ is actually equal to the
ultrafilter $g(\lgl\mathcal{U}\re\mathcal{F}\rgl)$, which is the filter on base set $\{g(a):a\in\mathcal{F}\}$ generated by the sets
$\{g(a):a\in\mathcal{F}|X\}$, $X\in\mathcal{U}$.
From this, one obtains that if $\mathcal{V}$ is nonprincipal, then  $\mathcal{V}\equiv_T\mathcal{U}$
and $\mathcal{V}\equiv_{RK}\mathcal{U}^{\al}$ for some $\al<\om_1$.
This is  a particular instance of a more general phenomenon which has been used successfully in several classes of ultrafilters to classify the structure of those ultrafilters Tukey reducible to an ultrafilter forced by a topological Ramsey space.

The steps (1) - (5) outlined in Subsection \ref{subsection.initialTRKstructure}
for Ramsey ultrafilters on $\om$ also provide the outline for obtaining precise results about Tukey and Rudin-Keisler structures below ultrafilters forced by topological Ramsey spaces.
A collection of ultrafilters
closed under Tukey reduction  is called an {\em initial Tukey structure}.
Likewise, a collection of ultrafilters which is closed under Rudin-Keisler reduction is called an {\em initial RK structure}.
Given a topological Ramsey space $\mathcal{R}$ and a filter $\mathcal{U}$ generic for $(\mathcal{R},\le^*)$,
we are interested in
classifying the initial Tukey an initial RK structures below the ultrafilter $\mathcal{U}\re\mathcal{AR}_1$.
Since $\mathcal{U}$ and $\mathcal{U}\re\mathcal{AR}_1$ are Tukey equivalent, we may work with either in the analysis.

The following is a special case of a more general fact  shown in \cite{Dobrinen/Mijares/Trujillo14},  stated here only for the case of generic filters.
Its proof used Theorem \ref{thm.tim}
showing that in particular, generic filters have continuous cofinal Tukey reductions.
A {\em front on $\mathcal{U}$} is a family $\mathcal{F}\sse\mathcal{AR}$ such that
for each $X\in\mathcal{U}$, there is an $a\in\mathcal{F}$ such that $a\sqsubset X$,
and no member of $\mathcal{F}$  is a proper initial segment of another member of $\mathcal{F}$.
Recall that $g(\mathcal{F}|X)=\{g(a):a\in\mathcal{F}|X\}$.

\begin{thm}[\cite{Dobrinen/Mijares/Trujillo14}]\label{thm.50}
Let $\mathcal{U}$ be generic for $(\mathcal{R},\le^*)$, and suppose that $\mathcal{V}$ is a nonprincipal ultrafilter on base set $\om$ such that $\mathcal{V}\le_T\mathcal{U}$.
Then there is a front $\mathcal{F}$ on $\mathcal{U}$ and a function $g:\mathcal{F}\ra \om$ such that for each $V\in\mathcal{V}$, there is an $X\in\mathcal{U}$ such that $g(\mathcal{F}|X)\sse V$, and moreover each such $g(\mathcal{F}|X)$ is a member of $\mathcal{V}$.
Thus, $\mathcal{V}$ equals the ultrafilter
 on the base set $\{g(a):a\in\mathcal{F}\}$
generated by the set $\{g(\mathcal{F}|X):X\in\mathcal{U}\}$.
\end{thm}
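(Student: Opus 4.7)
The plan is to mirror the five-step blueprint of Subsection \ref{subsection.initialTRKstructure}, with Theorem \ref{thm.tim} now supplying the finitary representation of the cofinal map in place of Theorem \ref{thm.20}. Fix any monotone cofinal $f:\mathcal{U}\ra\mathcal{V}$ witnessing $\mathcal{V}\le_T\mathcal{U}$, and apply Theorem \ref{thm.tim} to obtain some $X_0\in\mathcal{U}$, a monotone continuous $f^*:\mathcal{U}\ra\mathcal{V}$ agreeing with $f$ on $\mathcal{U}\re X_0$, and a monotone finitary $\check{f}:\mathcal{AR}\ra[\om]^{<\om}$ with $f^*(Y)=\bcup_{k<\om}\check{f}(r_k(Y))$ for every $Y\in\mathcal{U}$.

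Since $\mathcal{V}$ is nonprincipal, every $f^*(Y)$ is infinite, so for each $Y\le X_0$ the integer $n_Y:=\min\{n:\check{f}(r_n(Y))\ne\emptyset\}$ exists. Put $\mathcal{F}=\{r_{n_Y}(Y):Y\le X_0\}$ and define $g:\mathcal{F}\ra\om$ by $g(a)=\min\check{f}(a)$. Every $Y\le X_0$ has the initial segment $r_{n_Y}(Y)\in\mathcal{F}$, and the minimality clause forbids any proper $\sqsubset$-relation between members of $\mathcal{F}$, so $\mathcal{F}$ is a Nash-Williams front on $[0,X_0]$.

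The easy half is now immediate. Given $V\in\mathcal{V}$, pick $X\in\mathcal{U}\re X_0$ with $f^*(X)\sse V$. Every $a\in\mathcal{F}|X$ has $\depth_X(a)<\infty$, so $a\le_{\fin}r_m(X)$ for some $m$; by the monotonicity of $\check{f}$ (clause (3c) of Definition \ref{defn.basic}), $\check{f}(a)\sse\check{f}(r_m(X))\sse f^*(X)\sse V$, and hence $g(a)\in V$, giving $g(\mathcal{F}|X)\sse V$.

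The main obstacle will be showing that each generator $g(\mathcal{F}|X)$ actually belongs to $\mathcal{V}$, for only then is the filter generated by $\{g(\mathcal{F}|X):X\in\mathcal{U}\}$ a proper filter contained in $\mathcal{V}$. Suppose for contradiction that $\om\setminus g(\mathcal{F}|X)\in\mathcal{V}$ for some $X\in\mathcal{U}\re X_0$; by the previous step there is $Y\in\mathcal{U}\re X_0$ with $g(\mathcal{F}|Y)\cap g(\mathcal{F}|X)=\emptyset$. Using the directedness of $\mathcal{U}$ under $\le^*$, pick $Z\in\mathcal{U}$ with $Z\le^* X$, $Z\le^* Y$, and $Z\le^* X_0$, witnessed by $a_1,a_2,a_3\in\mathcal{AR}|Z$, and let $c\in\mathcal{AR}|Z$ be a common $\sqsubseteq$-extension. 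Axiom {\bf A.3}(a) yields $[c,Z]\ne\emptyset$, and any $W\in[c,Z]$ satisfies $W\le X$, $W\le Y$, and $W\le X_0$; axiom {\bf A.2}(b) then places $r_{n_W}(W)\in\mathcal{F}|X\cap\mathcal{F}|Y$, so $g(r_{n_W}(W))$ sits in the intersection assumed empty, a contradiction. Combined with the easy half, the filter generated by the $g(\mathcal{F}|X)$ is simultaneously contained in and cofinal in $\mathcal{V}$, hence equals $\mathcal{V}$, completing the proof.
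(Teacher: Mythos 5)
Your proposal is correct and follows essentially the same route the paper sketches for this result: apply Theorem \ref{thm.tim} to get the finitary monotone representation $\check{f}$, define the front by minimality of the first nonempty $\check{f}$-value and set $g(a)=\min\check{f}(a)$, then verify that the sets $g(\mathcal{F}|X)$ both lie in and generate $\mathcal{V}$ (the last point via the directedness/genericity argument producing a common refinement $W$ below $X$, $Y$, and $X_0$). The only point you pass over silently is the hypothesis of Theorem \ref{thm.tim} that $\{d(X):X\in\mathcal{U}\}$ generates a nonprincipal ultrafilter, but the paper treats generic filters the same way, so this is consistent with the source.
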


Since each function $g:\mathcal{F}\ra\om$ induces an equivalence relation on $\mathcal{F}$,
once a canonization theorem is proved for equivalence relations on  fronts,
it will help us understand
sets of the form $\{g(a):a\in\mathcal{F}\}$.
Once these are well-understood,
the  inital Tukey and initial RK structures are well-understood.
This will be made clear for a sampling of examples in the next Sections.


\section{Topological Ramsey space theory applied to ultrafilters satisfying weak partition relations: an overview of the following sections}\label{sec.forcingTRS}

In Section \ref{sec.RamseyUF}  we delineated some of the important properties of Ramsey ultrafilters and how these properties are connected with the Ellentuck space.
We saw that Ramsey ultrafilters have complete combinatorics,  are RK minimal, Tukey minimal, and that the RK classes inside a Ramsey ultrafilter's  Tukey type  are exactly those of its countable Fubini powers.
We also saw how the Nash-Williams Theorem provides a quick proof that any ultrafilter forced by $\mathcal{P}(\om)/\Fin$ is Ramsey.
In the next two sections, we will look in-depth at some examples of ultrafilters from the literature which are p-points satisfying  weak partition relations.
For these examples, we will go through the steps of the results that these ultrafilters are similar to Ramsey ultrafilters, achieving Objectives 1\,-\,4 below.
In the final section of this expository paper, we will mention similar results for some broad classes of ultrafilters.
Some of these ultrafilters are well-known from the literature and some are new, having been produced by the construction of new topological Ramsey spaces.
These results  were motivated by the following question.

\begin{question}\label{question.motivator}
 What  is the structure of the Tukey types of ultrafilters which are close to minimal in the Rudin-Keisler hierarchy?
\end{question}

This  is closely related to the question of finding the Tukey structure of ultrafilters satisfying weak partition relations, since in all known examples,  partition relations are corollated with  increased  Rudin-Keisler strength.
In investigating these questions, the following objectives were attained for the ultrafilters investigated so far.

\begin{Objective1}
Complete combinatorics.
\end{Objective1}

\begin{Objective2}
The exact structure of the Tukey types  of all ultrafilters Tukey reducible to the forced ultrafilter.
\end{Objective2}

\begin{Objective3}
The exact structure of the Rudin-Keisler classes of all ultrafilters Rudin-Keisler reducible to the forced ultrafilter.
\end{Objective3}

\begin{Objective4}
The exact structure of the Rudin-Keisler classes inside the Tukey type of each ultrafilter Tukey reducible to the forced ultrafilter.
\end{Objective4}

To  attain these goals  for ultrafilters which are
 constructed using some $\sigma$-closed partial order by forcing or by using  the Continuum Hypothesis, Martin's Axiom, or some weaker cardinal invariant assumption to construct them, it suffices to
satisfy the steps (1) - (5)  outlined
in Subsection \ref{subsection.initialTRKstructure}  which were  generalized to abstract topological Ramsey spaces  in Section \ref{sec.tRsoutline}.
First, and sometimes challenging, one  needs to
 find a topological Ramsey space which forces the ultrafilter under investigation.
It suffices to show that there is a dense set in the partial order which  can be structured to form a topological Ramsey space.
This attains Objective 1.
Second, one needs to prove that cofinal maps from such ultrafilters are continuous, with respect to the correct topology.
This has now been done in much generality in
Theorem \ref{thm.tim}.
Third, one needs to
 prove
that equivalence relations on fronts for these spaces are canonical when restricted below some member of the space.
One needs to understand  these canonical equivalence relations very well in order to analyze their implications.
Putting this together with Theorem \ref{thm.50}
allows us to achieve Objectives 2 - 4.


\section{Weakly Ramsey ultrafilters}\label{section.R_1}

An ultrafilter $\mathcal{U}$ is {\em weakly Ramsey}
if for each $l\ge 2$ and each coloring $c:[\om]^2\ra l$, there is a member $U\in\mathcal{U}$ such that the restriction of $c$ to $[U]^2$ takes on at most two colors.
This is denoted symbolically as
\begin{equation}
\mathcal{U}\ra(\mathcal{U})^2_{l,2}.
\end{equation}
Blass showed in \cite{Blass74} that  weakly Ramsey ultrafilters have exactly one Rudin-Keisler predecessor, and that is a Ramsey ultrafilter.
Thus, the initial Rudin-Keisler structure below an ultrafilter forced by $\bP_1$ is simply a chain of length two (disregarding the principal ultrafilters).

In \cite{Laflamme89}, Laflamme constructed a partial ordering $(\bP_1,\le_1)$ which forces a weakly Ramsey ultrafilter.
This partial ordering  has conditions which are simply infinite subsets of $\om$, but the partial order  $\le_1$  is stronger than inclusion.

\begin{defn}[$(\bP_1,\le_1)$, \cite{Laflamme89}]\label{defn.le1}
$\bP_1=[\om]^{\om}$.
Let $X,Y\in[\om]^{\om}$.
Enumerate them
in increasing order  and in blocks of increasing size as $X=\lgl x_1^1,x_1^2,x_2^2,\dots,x^n_1,\dots,x^n_n,\dots\rgl$,
and $Y=\lgl y_1^1,y_1^2,y_2^2,\dots,y^n_1,\dots,y^n_n,\dots\rgl$.
We call $\{x^n_1,\dots,x^n_n\}$ the $n$-th block of $X$ and similarly for $Y$.
Define $Y\le_1 X$ if and only if
\begin{equation}
\forall n\ \exists m \
\{y_1^n,\dots,y_n^n\}\sse\{x_1^m,\dots,x_m^m\}.
\end{equation}
\end{defn}
Note that $Y\le_1 X$ implies $Y\sse X$ but not vice versa.
It is this stronger ordering which is responsible for producing an ultrafilter which is weakly Ramsey but not Ramsey.

 Laflamme gave a combinatorial characterization of
ultrafilters forced by $\mathbb{P}_1$,  showing that these ultrafilters have complete combinatorics in the original sense of  Blass.
An ultrafilter $\mathcal{V}$ {\em satisfies  the Ramsey partition relation} RP$(k)$ if
for all functions $f:[\om]^k\ra 2$ and all partitions $\lgl A_m:m<\om\rgl$ of $\om$ with each $A_m\not\in\mathcal{V}$,
there is a set $X\in\mathcal{V}$ such that
\begin{enumerate}
\item
$|X\cap A_m|<\om$ for all $m<\om$;
\item
$|f''[X\cap A_m]^k|\le 1$ for all $m<\om$.
\end{enumerate}
Baumgartner and Taylor showed in \cite{Baumgartner/Taylor78}
that RP$(2)$ is equivalent to weakly Ramsey (see also \cite{Blass74}).
Laflamme also proved in \cite{Laflamme89} that for each $k$, RP$(k)$ is equivalent to $\mathcal{U}\ra(\mathcal{U})^k_{l,2^{k-1}}$, a fact which  he credits to Blass.

Laflamme  proved that  the ultrafilter forced by $\mathbb{P}_1$ satisfies RP$(k)$ for all $k$.
Thus, it would seem that the ultrafilters forced by
$\bP$ are a strong sort of  weakly Ramsey ultrafilter.
Indeed, it follows by work of Trujillo in  that, assuming CH, there are weakly Ramsey ultrafilters which  do not satisfy RP$(k)$ for some $k>2$.
(See Corollary 4.2.5 in \cite{TrujilloThesis} for the exact statement.)
The properties RP$(k)$ for all $k$ are what completely characterize ultrafilters being forced by $\bP_1$ over a canonical inner model, as follows.
Recall that an ultrafilter $\mathcal{U}$ is {\em rapid} if for each strictly increasing function  $h:\om\ra\om$,
there is a member $X\in\mathcal{U}$ such that for each $n<\om$,  $|X\cap h(n)|\le n$.

\begin{thm}[Laflamme, \cite{Laflamme89}]
Let $\kappa$ be a Mahlo cardinal and $G$ be generic for the L\'{e}vy collapse of $\kappa$.
If $\mathcal{U}\in V[G]$ is a rapid ultrafilter satisfying RP$(k)$ for all $k$, but $\mathcal{U}$ is not Ramsey, then $\mathcal{U}$ is $\bP_1$-generic over HOD$(\mathbb{R})^{V[G]}$.
\end{thm}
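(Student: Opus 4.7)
The plan is to adapt Mathias's proof of complete combinatorics for Ramsey ultrafilters (over $\mathcal{P}(\omega)/\mathrm{Fin}$) to the Laflamme forcing $(\bP_1,\le_1)$. The goal is to show that for every dense open $D\sse\bP_1$ lying in $\mathrm{HOD}(\bR)^{V[G]}$, the ultrafilter $\mathcal{U}$ contains a condition $Y$ such that every $Z\le_1 Y$ belongs to $D$. Since $\mathcal{U}\cap\mathrm{HOD}(\bR)^{V[G]}$ will then meet every such $D$, this suffices for genericity.

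The first ingredient is a Solovay-style fact about the Mahlo collapse: in $V[G]$, every subset of $\bP_1$ definable from a real and ordinals (hence every $D\in\mathrm{HOD}(\bR)^{V[G]}$) has the property of Baire with respect to the Ellentuck-style topology on $\bP_1$ generated by basic open sets $[a,X]=\{Y\in\bP_1:a\sqsubset Y\le_1 X\}$. The Mahlo hypothesis is exactly what is needed for this regularity to transfer from definable subsets of $\bR$ to definable subsets of $\bP_1$ via the block-coding of conditions. The second ingredient is an Ellentuck/Nash-Williams theorem for $\bP_1$: every property-of-Baire subset $\mathcal{X}\sse\bP_1$ is Ramsey, i.e.\ every $[a,X]$ admits $Y\in[a,X]$ with $[a,Y]\sse\mathcal{X}$ or $[a,Y]\cap\mathcal{X}=\emptyset$. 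The natural approach is to identify a dense subset of $\bP_1$ carrying the structure of a topological Ramsey space (with approximations given by initial finite block sequences and $\le_\fin$ coming from block-wise containment) and verify axioms \textbf{A.1}--\textbf{A.4}, whence the Abstract Ellentuck Theorem applies.

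The third and main ingredient is to convert the combinatorial hypotheses on $\mathcal{U}$ into an abstract selectivity statement: any $\le_1^*$-decreasing sequence $X_0\ge_1^*X_1\ge_1^*\cdots$ in $\mathcal{U}$ admits a pseudo-intersection in $\mathcal{U}$, and more generally $\mathcal{U}$ diagonalizes against any countable family of dense open subsets of $(\bP_1,\le_1^*)$ given in advance. Here one uses $\mathrm{RP}(k)$ for all $k$ to homogenize any partition of the $k$-th level of finite approximations to at most two colors on a member of $\mathcal{U}$, uses rapidity to control the growth rate of the block sequence witnessing the pseudo-intersection (preventing rapid shrinkage of blocks from obstructing the diagonalization), and uses the hypothesis that $\mathcal{U}$ is not Ramsey to ensure the two-color case truly occurs, pinning down the precise strength of $\mathcal{U}$.

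Once these three ingredients are in place, the argument closes: given $D\in\mathrm{HOD}(\bR)^{V[G]}$ dense open and any $X\in\mathcal{U}$, the set $\mathcal{X}=\{Y\in\bP_1:[0,Y]\sse D\}$ has the property of Baire, so by the Ellentuck-type theorem there is $Y\le_1 X$ with $[0,Y]$ entirely in $\mathcal{X}$ or entirely outside. The first alternative always occurs on a $\le_1^*$-cofinal set (by density of $D$ and axiom \textbf{A.3}), so by the abstract selectivity of $\mathcal{U}$ such a $Y$ can be found in $\mathcal{U}$. I expect the hardest step to be the third: teasing the selectivity/diagonalization property out of the combinatorial package (rapid + $\mathrm{RP}(k)$ for all $k$ + not Ramsey) without appealing circularly to genericity. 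The subtlety is that $\mathrm{RP}(k)$ gives homogenization only up to two colors per block-level, so fusion must be performed level by level with a careful block-wise amalgamation, and rapidity must be invoked at each step to certify that the resulting diagonal lies in $\mathcal{U}$.
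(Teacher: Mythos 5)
The paper does not actually reprove this theorem of Laflamme; it only records that the proof rests on the equivalence stated immediately afterwards, namely that ``rapid and RP$(k)$ for all $k$'' is equivalent to the \emph{local} homogenization property: for every $\Sigma^1_1$ set $\mathcal{X}\sse[\om]^{\om}$ there is an $X\in\mathcal{U}$ with $\{Y:Y\le_1 X\}$ contained in or disjoint from $\mathcal{X}$. Your plan has the right Mathias-style shape overall (capture definable dense sets by sufficiently regular sets, apply an Ellentuck-type dichotomy for the $\le_1$-structure, and use the combinatorics of $\mathcal{U}$ to land the homogeneous side inside $\mathcal{U}$), but there is a genuine gap at exactly the step that carries the weight of the theorem.

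The gap is in how you close the argument. The global Ellentuck-type theorem of your second ingredient only produces \emph{some} homogeneous $Y\le_1 X$, and you then assert that ``by the abstract selectivity of $\mathcal{U}$ such a $Y$ can be found in $\mathcal{U}$.'' Selectivity as you describe it (pseudo-intersections of $\le_1^*$-decreasing sequences, diagonalization of countable families of dense open subsets of $\mathcal{U}$) does not yield that $\mathcal{U}$ meets the set of homogeneous conditions: the claim that $\mathcal{U}$ meets a given dense open subset of $(\bP_1,\le_1)$ is an instance of the very genericity being proved, so as written the step is circular. What is actually required is a \emph{local} Ellentuck theorem relative to $\mathcal{U}$ --- for every sufficiently definable $\mathcal{X}$ and every $X\in\mathcal{U}$ there is a homogeneous $Y\in\mathcal{U}$ with $Y\le_1 X$ --- and for $\Sigma^1_1$ sets this is precisely Laflamme's equivalence above, proved by a direct fusion argument from rapidity and RP$(k)$ rather than deduced from the global dichotomy plus selectivity used as black boxes. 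The Mahlo collapse then enters not to give the full property of Baire for the Ellentuck-like topology, but to reduce an arbitrary dense $D\in \mathrm{HOD}(\mathbb{R})^{V[G]}$ to $\Sigma^1_1$ sets computed in intermediate models $V[G\re\al]$ for inaccessible $\al$, to which the local $\Sigma^1_1$ property applies. Finally, note that a Ramsey ultrafilter is rapid and satisfies RP$(k)$ for all $k$ vacuously (selectivity gives a set meeting each $A_m$ in at most one point), yet is generic for $\mathcal{P}(\om)/\Fin$ rather than for $\bP_1$; so the hypothesis ``not Ramsey'' is there to exclude this degenerate case and force the genuine two-level block structure of a $\bP_1$-generic, which is a sharper role than the one you assign it.
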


The proof uses a key theorem Laflamme proves earlier in \cite{Laflamme89} which is worth mentioning, as the reader will see the correlation with the  topological Ramsey space formulation  presented shortly.

\begin{thm}[Laflamme, \cite{Laflamme89}]
Let $\mathcal{U}$ be a nonprincipal ultrafilter.
Then the following are equivalent:
\begin{enumerate}
\item
$\mathcal{U}$ is rapid and satisfies RP$(k)$ for all $k$.
\item
For all $\Sigma_1^1$ sets $\mathcal{X}\sse[\om]^{\om}$,
there is a set $X\in\mathcal{U}$ such that
\begin{equation}
\{Y\in[\om]^{\om}:Y\le_1 X\}\sse\mathcal{X}\mathrm{\ \ or\ \ }
\{Y\in[\om]^{\om}:Y\le_1 X\}\cap\mathcal{X}=\emptyset.
\end{equation}
\item
$\mathcal{U}$ is rapid and satisfies $\mathcal{U}\ra(\mathcal{U})^k_{l,2^{k-1}}$ for all $k$.
\end{enumerate}
\end{thm}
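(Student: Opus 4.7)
The plan is to organize the proof into three directions: (1) $\Llra$ (3), (2) $\Lra$ (1), and (1) $\Lra$ (2). The first is essentially bookkeeping, the second extracts combinatorial statements from suitable Borel sets, and the third is the main work and parallels the Galvin--Prikry--Ellentuck--Mathias route but carried out relative to $\le_1$ rather than $\sse^*$.

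For (1) $\Llra$ (3), no new content is needed: the paper has already cited Blass's equivalence $\mathrm{RP}(k)\Llra\mathcal{U}\ra(\mathcal{U})^k_{l,2^{k-1}}$, applied separately for each $k$, and rapidity appears identically on both sides.

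For (2) $\Lra$ (1), I would produce for each combinatorial requirement an explicit Borel set whose (2)-dichotomy yields the conclusion. For rapidity, given strictly increasing $h:\om\ra\om$, take the closed set
\[\mathcal{X}_h = \{Y\in[\om]^{\om}: |Y\cap h(n)|\le n\text{ for all }n\}.\]
The alternative $\{Y:Y\le_1 X\}\cap\mathcal{X}_h=\emptyset$ is impossible, since given any $X$ one can find a $\le_1$-extension whose $n$-th block lies above $h(n+1)$; hence $X$ itself witnesses rapidity for $h$. For $\mathrm{RP}(k)$, given $c:[\om]^k\ra 2$ and a partition $\lgl A_m:m<\om\rgl$ with every $A_m\notin\mathcal{U}$, apply (2) to the Borel set of $Y\in[\om]^{\om}$ satisfying both clauses of $\mathrm{RP}(k)$ against this data; the vacuous alternative is ruled out by a density argument using that no $A_m\in\mathcal{U}$ and that any $X\in\mathcal{U}$ can be thinned (by finite Ramsey on each $X\cap A_m$) into the set.

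The implication (1) $\Lra$ (2) is the main obstacle, and I expect it to carry essentially all the work. The strategy is a two-stage Ramsey argument along $\le_1$, mirroring the passage from Nash-Williams to Ellentuck. Step one is a Nash-Williams-type theorem for $\bP_1$: using $\mathrm{RP}(k)$ for all $k$ together with rapidity, prove that for every metric-open $\mathcal{X}\sse[\om]^{\om}$ there is an $X\in\mathcal{U}$ with $\{Y:Y\le_1 X\}$ homogeneous for $\mathcal{X}$; the argument homogenizes on the natural block-fronts $\mathcal{AR}_k$ of $\bP_1$ level by level, invoking $\mathrm{RP}(k)$ at level $k$ and using that $\mathcal{U}$ closes up the homogenizers. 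Step two promotes this to $\Sigma^1_1$ sets by writing $\mathcal{X}$ as the projection of a tree $T\sse(\om^{<\om})^2$ and executing a fusion $X_0\ge_1^* X_1\ge_1^* \cdots$ inside $\mathcal{U}$ that, at stage $n$, homogenizes with respect to the $n$-th level of $T$ via step one; closure of $\mathcal{U}$ under $\le_1^*$-pseudointersections then delivers a single $X\in\mathcal{U}$ realizing the dichotomy.

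The central difficulty is the fusion in step two. Unlike in the Ellentuck setting, $\le_1$ is considerably more rigid than $\sse$: refining $X$ to $Y\le_1 X$ couples the growth rate of the blocks of $Y$ to the block indexing of $X$. Rapidity is exactly what decouples them, guaranteeing at each stage enough room in the tail blocks to carry out the next homogenization without violating the block-size constraints imposed by $\le_1$. I therefore expect the technical heart of the argument to be the joint use of rapidity and $\mathrm{RP}(k)$ at each stage of the fusion to produce a $\le_1^*$-pseudointersection still lying in $\mathcal{U}$, and the verification that these local choices cohere to a global $X\in\mathcal{U}$ witnessing the dichotomy for the ambient $\Sigma^1_1$ set.
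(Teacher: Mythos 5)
The paper does not actually prove this theorem: it is quoted verbatim as a background result of Laflamme, with the proof residing entirely in \cite{Laflamme89}, so your proposal can only be judged on its own terms. Your decomposition is reasonable, (1) $\Leftrightarrow$ (3) is indeed free given the Blass equivalence of RP$(k)$ with $\mathcal{U}\ra(\mathcal{U})^k_{l,2^{k-1}}$ already recorded in the paper, and the rapidity half of (2) $\Rightarrow$ (1) works as you describe. But the RP$(k)$ half of (2) $\Rightarrow$ (1) has a genuine gap. You apply (2) to the Borel set $\mathcal{Y}$ of all $Y$ satisfying both clauses of RP$(k)$ and rule out the empty alternative by asserting that any $X\in\mathcal{U}$ can be $\le_1$-thinned into $\mathcal{Y}$. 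That density claim is false for an arbitrary $X$: if $A_0$ contains every element of every block of $X$ except the maximum, then each block of size $n\ge 2$ of any $Y\le_1 X$ lies inside a single block of $X$ and hence contains at least $n-1$ elements of $A_0$, so $Y\cap A_0$ is infinite for \emph{every} $Y\le_1 X$ and the cone $\{Y:Y\le_1 X\}$ is disjoint from $\mathcal{Y}$. For the ultrafilters in question such a configuration in fact forces $A_0\in\mathcal{U}$ (the complementary selector of block-maxima cannot belong to $\mathcal{U}$), which excludes the partition from RP$(k)$'s hypothesis --- but your argument never establishes or uses this. One must first produce an $X\in\mathcal{U}$ whose block structure is compatible with the partition (each block almost contained in a single $A_m$, say), itself a nontrivial application of (2) together with the hypothesis that no $A_m\in\mathcal{U}$, before any thinning argument can begin. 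That is where the real work of this direction lives, and it is skipped.

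The main direction (1) $\Rightarrow$ (2) is a plan rather than a proof, and the two lemmas you defer carry essentially all of the theorem's content. First, the ``Nash-Williams theorem for $\bP_1$'' you want to extract from RP$(k)$ for all $k$ plus rapidity is not routine: RP$(k)$ concerns colorings of plain $k$-element sets relative to partitions, whereas the fronts of $\bP_1$ are block-structured objects, and negotiating that translation is precisely the combinatorial heart of Laflamme's argument (it is also what the space $\mathcal{R}_1$ of Section 6 is engineered to capture). Second, your fusion step invokes ``closure of $\mathcal{U}$ under $\le_1^*$-pseudointersections,'' and in fact needs more --- diagonalization of countably many dense-open families, i.e.\ the $\bP_1$-analogue of Mathias's theorem that Ramsey ultrafilters are selective --- in order to push the clopen/open case through the Souslin operation to $\Sigma^1_1$ sets. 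RP$(2)$ does yield the ordinary P-point property and hence $\sse^*$-pseudointersections, but upgrading a $\sse^*$-pseudointersection to a $\le_1^*$-pseudointersection whose blocks nest correctly, and then to full semiselectivity, requires an argument (rapidity is the right tool, as you say, but it must actually be deployed). Until those two lemmas are proved, the proposal is an accurate map of where the difficulties lie rather than a proof of the theorem.
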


It is condition (2) that we shall soon see  is very closely related to the topological Ramsey space formulation of complete combinatorics.

The following topological Ramsey space
was constructed to essentially form a dense subset of $\bP_1$, so that the Ramsey space is  forcing equivalent to $\bP_1$.

\begin{defn}[$(\mathcal{R}_1,\le_{\mathcal{R}_1},r)$, \cite{Dobrinen/Todorcevic14}]\label{defn.R_1}
Let
\begin{equation}
\mathbb{T}=\{\lgl\rgl\}\cup\{\lgl n\rgl:n<\om\}\cup\bigcup_{n<\om}\{\lgl n,i\rgl:i\le n\}.
\end{equation}
$\mathbb{T}$ is an infinite tree of height two and consists of  an
infinite sequence of finite trees,
where the {\em $n$-th subtree} of $\bT$ is
\begin{equation}
\bT(n)=\{\lgl\rgl,\lgl n\rgl\}\cup\{\lgl n,i\rgl:i\le n\}.
\end{equation}
Thus $\bT(n)$  is a finite tree of height two with one node  $\lgl \rgl$  on level $0$, one node $\lgl n\rgl$ on level  $1$,  and $n+1$ nodes on level $2$.
An infinite subtree $X\sse \bT$ is a member of $\mathcal{R}_1$ if and only if
$X$  is tree-isomorphic to $\bT$.
This means  $X$ must be an infinite sequence of finite subtrees such that the $n$-th subtree $X(n)$
has the node $\lgl\rgl$ on level $0$, one node $\lgl k_n\rgl$ on level $1$,
and $n+1$ many nodes $\{\lgl k_n, i\rgl:i\in I_n\}$
on level $2$,
where $I_n$ is some subset of  $k_n+1$ of size $n+1$.
Moreover, we require that  for $n<n'$,
$X(n)$ and $X(n')$ come from subtrees $\bT(k_n)$ and $\bT(k_{n'})$ where $k_n<k_{n'}$.

The partial ordering $\le_{\mathcal{R}_1}$ on $\mathcal{R}_1$ is simply that of subtree.
The restriction map $r$ is defined by
$r_n(X)=\bigcup_{i<n}X(i)$, for each $n<\om$ and $X\in\mathcal{R}_1$.
$\mathcal{AR}_n$ is the set  $\{r_n(X):X\in\mathcal{R}_1\}$;
 $\mathcal{AR}=\{r_n(X):X\in\mathcal{R}_1,\ n<\om\}$.
For $a,b\in\mathcal{AR}$,
$b\le_{\mathrm{fin}} a$ if and only if
$b$ is a subtree of $a$.
The basic open sets are given by
$[a,X]=\{Y\in\mathcal{R}_1:a\sqsubseteq Y$ and $Y\le_{\mathcal{R}_1} X\}$.
\end{defn}

$\bT$ is a tree which codes Laflamme's blocking structure.
Instead of taking all infinite sets and ordering them by a partial ordering stricter than inclusion,
the shape of the trees allowed in $\mathcal{R}_1$ transfers the strict partial ordering $\le_1$ of $\bP_1$  to the structure of the trees.
By restricting $\mathcal{R}_1$ to contain only those subtrees $X$ of $\bT$ which  have
each $n$-th subtree $X(n)$ coming from within one $m$-th subtree $\bT(m)$,
the partial ordering  $\le_1$ gets transfered to the structure of the tree.
The further restriction that each $n$-th subtree of $X$, $X(n)$, must come from a different subtree of $\bT$ further serves to simplify the set of trees we work with, and
more importantly,  aids in proving  that the Pigeonhole Principle, \bf Axiom A.4\rm, holds for this space.

For finite sequences $a\in\mathcal{AR}$,
we shall write $a=(a(0),\dots,a(k-1))$ to denote that $a$ is a sequence of length $k$ of subtrees where  the $i$-th subtree has $i+1$ many maximal nodes.
We now show how
 \bf Axiom A.4 \rm follows from the finite Ramsey Theorem.
Let
$X\in\mathcal{R}_1$,   $k<\om$,
$Y\le_{\mathcal{R}_1} X$, and
$a=r_k(Y)$.
Let $\mathcal{O}\sse\mathcal{AR}_{k+1}$ be given.
Let $m$ be the least integer such that $a\sse r_m(X)$.
In other words, $\depth_X(a)$ is  finite.
To show \bf A.4 \rm
we need to show  there is some $Z\in[r_m(X),X]$ such that  either
$r_{k+1}[a,Z]\sse\mathcal{O}$
or $r_{k+1}[a,Z]\cap\mathcal{O}=\emptyset$.
Notice that  set $r_{k+1}[a,X]$ is  the set of all $c\in\mathcal{AR}_{k+1}$ such that $(c(0),\dots, c(k-1))=(a(0),\dots,a(k-1))$.

Let $\mathcal{R}_1(k)$ denote the set of all $c(k)$ where $c\in\mathcal{AR}_{k+1}$, that is the set of all $k$-th trees of some member of $\mathcal{AR}_{k+1}$.
The set $\mathcal{O}$ induces a coloring on
\begin{equation}
\mathcal{R}_1(k)\re (X/m):=
\{c(k)\in \mathcal{R}_1(k):\exists i\ge m\, (c(k)\sse X(i))\},
\end{equation}
since
\begin{equation}
r_{k+1}[a,X]=\{(a(0),\dots,a(k-1),c(k)):c(k)\in
\mathcal{R}_1(k)\re (X/m)\}.
\end{equation}
Given $c(k)\in\mathcal{R}_1(k)\re (X/m)$,
define $f(c(k))=0$ if $(a(0),\dots,a(k-1),c(k))\in\mathcal{O}$,
and $f(c(k))=1$ if
$(a(0),\dots,a(k-1),c(k))\not\in\mathcal{O}$.
Thus, to construct a $Z\in[r_m(X),X]$ for which either $r_{k+1}[a,Z]\sse\mathcal{O}$ or $r_{k+1}[a,Z]\cap\mathcal{O}=\emptyset$,
it suffices to construct such a $Z$ with the property that $f$ is constant on the  $\mathcal{R}_1(k)\re (Z/m)$.

This follows from the finite Ramsey Theorem.
Let $r_m(Y)=r_m(X)$.
Given $r_j(Y)$, to construct $Y(j)$
take some $l$ large enough that coloring the $k+1$ sized subsets of a set of size $l$, there is a subset of size $j$ on which the $k+1$ sized subsets are homogeneous.
Take some subtree  $Y(j)$ of $Y(l)$ with $j$ many maximal nodes such that $f$ is homogeneous
on the set of $c(k)\in \mathcal{R}_1(k)$ such that $c(k)$ is a subtree of $Y(j)$.
This constructs $r_{j+1}(Y)$.
Let $Y$ be the infinite sequence of the $Y(i)$, $i<\om$.
This $Y$ is a member of $\mathcal{R}_1$.

Now the color of $f$ might be different for $c(k)$'s coming from within different subtrees of $Y$.
But there must be infinitely many $j$ for which the color of $f$ on
$\{c(k)\in\mathcal{R}_1(k):c(k)\sse Y(j)\}$ is the same.
Take an increasing sequence  $(j_i)_{i\ge m}$
 and thin each $Y(j_i)$ to  a subtree $Z(i)$ with $i+1$ many maximal nodes.
Let $Z=(X(0),\dots, X(m-1),Z(m),Z(m+1),\dots)$.
Then $Z$ is  a member of $[r_m(X),X]$ for which $f$ is constant on $\mathcal{R}_1(k)\re (Z/m)$.
Hence, $r_{k+1}[a,Z]$ is either contained in or disjoint from  $\mathcal{O}$.
This proves \bf A.4\rm.  The other three axioms are routine to prove, and it is suggested that the reader go through those proofs to build intuition.

\begin{thm}[Dobrinen/Todorcevic, \cite{Dobrinen/Todorcevic14}]
$(\mathcal{R}_1,\le_{\mathcal{R}_1},r)$ is a topological Ramsey space.
\end{thm}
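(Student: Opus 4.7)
My plan is to invoke the Abstract Ellentuck Theorem (Theorem \ref{thm.AET}), so it suffices to verify that $(\mathcal{R}_1,\le_{\mathcal{R}_1},r)$ is closed as a subspace of $\mathcal{AR}^{\mathbb{N}}$ and that axioms \textbf{A.1}--\textbf{A.4} hold. The pigeonhole axiom \textbf{A.4} is the only truly combinatorial step, and its derivation from the finite Ramsey theorem has already been carried out in the discussion immediately preceding the statement. Consequently the real work left is bookkeeping: verifying the three structural axioms and checking closedness. Since no obstacle remains, my proposal is to record these checks cleanly in order and then cite the preceding paragraph for \textbf{A.4} before quoting Theorem \ref{thm.AET}.

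For \textbf{A.1}, note that $r_0(X)=\bigcup_{i<0}X(i)=\emptyset$ gives (a); (b) follows because $X=\bigcup_{n<\om}X(n)$ so distinct members differ at a first block $X(n)\ne X'(n)$, whence $r_{n+1}(X)\ne r_{n+1}(X')$; (c) is automatic from the fact that $r_n(X)$ is a concatenation of exactly $n$ blocks, so $r_n(X)=r_m(Y)$ forces $n=m$ and all shorter approximations to agree. For \textbf{A.2}, with $a\le_{\fin} b$ defined as ``$a$ is a subtree of $b$'' as in the definition of $\mathcal{R}_1$, clause (a) holds because any subtree of the finite tree $b$ is determined by choosing a subsequence of $b$'s blocks and then trimming within each chosen block, and there are only finitely many such choices; clause (b) is a direct unpacking of $\le_{\mathcal{R}_1}$, which demands precisely that every block $A(n)$ sit inside some block $B(m)$; clause (c) is immediate, since an embedding of $b$ into $c$ restricts to one of $a$ into an initial segment $d\sqsubset c$ of length $|a|$.

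For \textbf{A.3}, fix $a$ with $\depth_B(a)=n<\infty$ and $A\in[r_n(B),B]$. Since $|a|\le n$ and the blocks of $A$ from index $n$ onward have strictly increasing size, one can build $Y\in[a,A]$ by keeping the first $|a|$ blocks equal to $a$ and then, for each $j\ge|a|$, choosing $Y(j)$ to be a subtree with $j+1$ maximal nodes inside a sufficiently late block of $A$; this gives (a). Clause (b) follows by the same extension argument, starting from any given $Y\in[a,A]$ and redefining its first $n$ blocks to be $r_n(B)$ while keeping its tail (trimmed if necessary) inside $A$; the resulting $A'$ lies in $[r_n(B),B]$ and every member of $[a,A']$ has its tail blocks inside blocks of $A$, hence lies in $[a,A]$. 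Closedness of $\mathcal{R}_1$ in $\mathcal{AR}^{\mathbb{N}}$ is immediate, since membership is defined by the countably many closed conditions $r_n(X)\sqsubset r_{n+1}(X)$ together with the block-shape conditions on $X(n)=r_{n+1}(X)\setminus r_n(X)$ and the requirement that the indexing sequence of enclosing $\mathbb{T}(k_n)$'s be strictly increasing.

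With \textbf{A.1}--\textbf{A.3} and closedness in hand, and \textbf{A.4} already verified in the excerpt via the finite Ramsey theorem applied to the coloring $f(c(k))$ of potential $(k{+}1)$-st blocks, Theorem \ref{thm.AET} yields that every Baire subset of $\mathcal{R}_1$ is Ramsey and every meager subset is Ramsey null, completing the proof that $(\mathcal{R}_1,\le_{\mathcal{R}_1},r)$ is a topological Ramsey space.
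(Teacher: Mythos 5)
Your proposal is correct and follows the paper's own route exactly: the paper derives \textbf{A.4} from the finite Ramsey theorem in the discussion immediately preceding the theorem, declares \textbf{A.1}--\textbf{A.3} routine, and invokes the Abstract Ellentuck Theorem, which is precisely your plan (you merely write out the routine checks). One cosmetic slip: in \textbf{A.2}(c) the initial segment $d\sqsubset c$ containing $a$ need not have length $|a|$, since the blocks of $a$ may land in late blocks of $c$; but the axiom only asks for some proper initial segment $d\sqsubset c$ with $a\le_{\fin}d$, which your argument does produce.
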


The topological Ramsey space constructed has the property that below any member $S$ of $\bP_1$,
there is a correspondence between $\mathcal{R}_1$ and a dense set below $S$.

\begin{fact}
 $(\mathcal{R}_1,\sse)$ is forcing equivalent to $(\bP_1,\le_1)$.
\end{fact}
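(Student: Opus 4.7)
The plan is to exhibit a dense order-preserving map $\Phi:\mathcal{R}_1\to\bP_1$, so that forcing equivalence follows from the standard fact that such a map induces an isomorphism of the separative quotients.

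First, I would fix a bijection $\pi$ from the maximal nodes of $\bT$ onto $\om$ that sends the $k+1$ maximal nodes of $\bT(k)$ to a consecutive interval of length $k+1$ in $\om$, with the image of $\bT(k)$ preceding that of $\bT(k+1)$; an explicit choice is $\pi(\lgl k,i\rgl)=\binom{k+1}{2}+i$. Under this choice, $\pi$ maps the maximal nodes of $\bT$ bijectively onto $\om$, and the natural partition of $\om$ into intervals $\pi(\max\bT(k))$ is exactly the Laflamme partition into blocks of sizes $1,2,3,\dots$ used in Definition~\ref{defn.le1}.

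For $X\in\mathcal{R}_1$ with $X(n)\sse\bT(k_n)$, define $\Phi(X):=\bigcup_{n<\om}\pi(\max X(n))$. Because $k_0<k_1<\cdots$, the sets $\pi(\max X(n))$ are blocks of sizes $1,2,3,\dots$ appearing in increasing order, so they coincide with the Laflamme block decomposition of $\Phi(X)$ as an element of $\bP_1$ (modulo the cosmetic off-by-one indexing between the two spaces). The key order-preservation claim then becomes essentially tautological: if $Y\le_{\mathcal{R}_1}X$, then each $Y(n)$ is a subtree of some $X(m)$, so each Laflamme block of $\Phi(Y)$ is contained in the corresponding Laflamme block of $\Phi(X)$, giving $\Phi(Y)\le_1\Phi(X)$. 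Conversely, if $\Phi(Y)\le_1\Phi(X)$, then the fact that $\pi$ sends distinct $\bT(k)$'s to disjoint intervals of $\om$ forces each block of $Y$ to sit inside a single block of $X$; pulling back through $\pi$ gives $Y\le_{\mathcal{R}_1}X$.

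For density, given any $S\in\bP_1$ with Laflamme blocks $B_1,B_2,\dots$ ($|B_n|=n$), I would construct $X\in\mathcal{R}_1$ with $\Phi(X)\le_1 S$ by recursively choosing an increasing sequence $m_0<m_1<\cdots$ with $m_n\ge n+1$, then picking an $(n+1)$-element subset of $B_{m_n}$ and taking $X(n)$ to be the corresponding subtree of $\bT(k_n)$, where $k_n$ is the unique index with $\pi(\max\bT(k_n))\supseteq\pi^{-1}(B_{m_n})$ (ensuring also $k_0<k_1<\cdots$).

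The main obstacle is the inherent mismatch between Laflamme's combinatorial notion of block (read off from the increasing enumeration) and the tree-structural notion of block (built into membership in some $\bT(k)$): a generic $S\in\bP_1$ need not have its $n$-th Laflamme block sitting inside the image of a single $\bT(k)$. The pairing $\pi$ above is chosen precisely to align these two notions on $\om$ itself, and the density argument compensates for generic mismatches below $\om$ by first thinning $S$ down to a condition whose blocks do respect the $\pi$-image of the $\bT(k)$'s—this thinning is available since from any infinite set one can extract blocks of any desired sizes contained in arbitrarily late base blocks.
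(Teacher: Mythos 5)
Your order-preservation analysis is fine: with $\pi(\lgl k,i\rgl)=\binom{k+1}{2}+i$ the sets $\pi(\max X(n))$ really are the Laflamme blocks of $\Phi(X)$, and $Y\le_{\mathcal{R}_1}X$ iff $\Phi(Y)\le_1\Phi(X)$. The gap is in density, and it is fatal for the argument as you have set it up. Density of $\ran(\Phi)$ requires that every $S\in\bP_1$ have some $\Phi(X)\le_1 S$; in particular $S$ must contain, for each $n$, at least $n+1$ points lying in a single interval $\pi(\max\bT(k))=[\binom{k+1}{2},\binom{k+2}{2})$. But $\bP_1$ is all of $[\om]^{\om}$, so you may take $S=\{\binom{k+1}{2}:k<\om\}$, which meets every such interval in exactly one point. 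No subset of this $S$ of size $\ge 2$ lies inside a single $\pi(\max\bT(k))$, so there is no $X\in\mathcal{R}_1$ with $\Phi(X)\sse S$, let alone $\Phi(X)\le_1 S$. The escape hatch you propose in your last paragraph --- ``from any infinite set one can extract blocks of any desired sizes contained in arbitrarily late base blocks'' --- is exactly the false statement here: an infinite subset of $\om$ can be transversal to any fixed partition of $\om$ into finite intervals. So no single fixed identification $\pi$ of $[\bT]$ with $\om$ can make $\ran(\Phi)$ dense in $(\bP_1,\le_1)$.

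The paper's proof avoids this by making the identification depend on the condition: given $S\in\bP_1$, it enumerates $S$ itself in Laflamme blocks and defines $\theta_S$ sending the $n$-th block \emph{of $S$} onto $\max\bT(n-1)$. Then for any $S'\le_1 S$, each block of $S'$ lands inside a single $\bT(k)$ by definition of $\le_1$, and after thinning so that distinct blocks go to distinct $\bT(k)$'s one gets $X\in\mathcal{R}_1$ with $\theta_S^{-1}([X])\le_1 S'$. This shows a copy of $\mathcal{R}_1$ sits densely below \emph{every} condition of $\bP_1$ (rather than exhibiting one global dense embedding), and forcing equivalence follows from that together with the evident homogeneity of $\bP_1$ (block-preserving bijections between any $S$ and $S'$ carry the cone below $S$ isomorphically onto the cone below $S'$). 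If you want to keep your global map $\Phi$, the correct statement you can extract from it is that $\Phi$ embeds $\mathcal{R}_1$ densely into the cone of $\bP_1$ below $\om$; you would then still need the homogeneity argument to conclude equivalence with all of $\bP_1$, so nothing is saved by fixing $\pi$ in advance.
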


\begin{proof}
Let $S$ be any member of $[\om]^{\om}$.
Enumerate  $S$ in   blocks  of increasing length as $\{s^1_1,s^2_1,s^2_2,s^3_1,\dots\}$
as in Laflamme's blocking procedure.
Let $\theta:S\ra\bT$ be the map which takes $s^n_i$ to the node $\lgl n-1,i-1\rgl$ in $[\bT]$.
(Recall that $[\bT]$ denotes the set of maximal nodes in $\bT$.)
Then for each member $S'\in[\om]^{\om}$ such that $S'\le_1 S$,
$\theta(S')$ induces the  subtree  $\widehat{\theta(S')}$ of $\bT$ consisting of  the set of all initial segments of members of $\theta(S')$.
Now this $\widehat{\theta(S')}$ might not actually be a member of $\mathcal{R}_1$ as
there could be two  blocks of $S'$ that lie in one block of $S$;
this would translate to two subtrees
$\widehat{\theta(S')}(m)$ and $\widehat{\theta(S')}(n)$, for some $m<n$, being subtrees of the same $\bT(k)$ for some $k$.
However, we can take a subtree $X\sse \widehat{\theta(S')}$ which has each $n$-th subtree of $X$ coming from a different $k$-th subtree of $\bT$ so that $X\in\mathcal{R}_1$.
Then $S'':=\theta^{-1}(X)$
will be a member of $[\om]^{\om}$ such that $S''\le_1 S$, and
$\theta(S'')=[X]$, where $X$
is a member of $\mathcal{R}_1$.
Thus,  given an $S\in\bP_1$,
the set $\{\theta^{-1}([X]):X\in\mathcal{R}_1\}$ is dense below $S$ in the partial ordering $\bP_1$.
\end{proof}

For $\mathcal{R}_1$, the $\sigma$-closed almost reduction ordering
presented in Definition \ref{def.almostred}
is equivalent to the following.
Given $X,Y\in\mathcal{R}_1$,
$Y \le^*_{\mathcal{R}_1} X$ if and only if
there is some $m$ such that for all $n\ge m$, there is an $i_n$ such that $Y(n)\sse X(i_n)$.
Notice that for the space $\mathcal{R}_1$,
$Y \le^*_{\mathcal{R}_1} X$ if and only if
$[Y]\sse^*[X]$.

Let $\mathcal{U}_{\mathcal{R}_1}$ denote a generic ultrafilter forced by $(\mathcal{R}_1,\le_{\mathcal{R}_1}^*)$.
The following from \cite{Dobrinen/Todorcevic14} completely characterizes the Tukey types  of the ultrafilters Tukey reducible to $\mathcal{U}_{\mathcal{R}_1}$.
Furthermore, it characterizes the Rudin-Keisler classes inside those Tukey types.
The set $\mathcal{Y}_{\mathcal{R}_1}$  denotes a countable set of p-points which will be defined below.

\begin{thm}[Dobrinen/Todorcevic, \cite{Dobrinen/Todorcevic14}]\label{thm.DTTams1}
$(\mathcal{R}_1,\le_{\mathcal{R}_1})$   and $(\bP_1,\le_1)$ are forcing equivalent, and
 $(\mathcal{R}_1,\le_{\mathcal{R}_1}^*)$   and $(\bP_1,\le_1^*)$ are forcing equivalent.
Let $\mathcal{U}_{\mathcal{R}_1}$ denote the ultrafilter forced by
$(\mathcal{R}_1,\le_{\mathcal{R}_1}^*)$,
and
let $\mathcal{U}_0$ denote the Ramsey ultrafilter obtained from projecting  $\mathcal{U}_{\mathcal{R}_1}$ to level $1$ on the tree $\bT$.
\begin{enumerate}
\item
For each nonprincipal ultrafilter  $\mathcal{V}\le_T\mathcal{U}_{\mathcal{R}_1}$,
either $\mathcal{V}\equiv_T\mathcal{U}_{\mathcal{R}_1}$ or else $\mathcal{V}\equiv_T\mathcal{U}_0$.
\item
If $\mathcal{V}\equiv_T\mathcal{U}_{\mathcal{R}_1}$,
then $\mathcal{V}$ is Rudin-Keisler equivalent to some Fubini iterate of ultrafilters each of which is in
a specific countable collection of p-points, denoted $\mathcal{Y}_{\mathcal{R}_1}$.
If $\mathcal{V}\equiv_T\mathcal{U}_0$,
then $\mathcal{V}$ is Rudin-Keisler equivalent to some $\al$-th Fubini power of $\mathcal{U}_0$,
where $\al<\om_1$.
\end{enumerate}
\end{thm}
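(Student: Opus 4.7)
The plan is to mimic the five-step schema outlined for Ramsey ultrafilters in Subsection \ref{subsection.initialTRKstructure}, adapted to the topological Ramsey space $\mathcal{R}_1$. The forcing equivalences in (1) are essentially established already: the map $\theta$ described just before the theorem produces, below any $S\in\bP_1$, a dense set isomorphic to $\mathcal{R}_1$. The same correspondence transfers the almost reduction orderings, since for $X,Y\in\mathcal{R}_1$, $Y\le^*_{\mathcal{R}_1}X$ iff $[Y]\subseteq^*[X]$ as sets of branches of $\bT$, which under $\theta^{-1}$ corresponds precisely to $\le_1^*$. Hence it suffices to carry out the Tukey analysis for $\mathcal{U}_{\mathcal{R}_1}$ inside $\mathcal{R}_1$.

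For the body of (2) and (3), fix a nonprincipal $\mathcal{V}$ on $\om$ with $\mathcal{V}\le_T \mathcal{U}_{\mathcal{R}_1}$ and a monotone cofinal map $f:\mathcal{U}_{\mathcal{R}_1}\to\mathcal{V}$. Since $\mathcal{U}_{\mathcal{R}_1}$ is generic, the collection $\{d(X):X\in\mathcal{U}_{\mathcal{R}_1}\}$ generates a nonprincipal ultrafilter on $\om$, so Theorem \ref{thm.tim} applies: $f$ agrees, below some $X\in \mathcal{U}_{\mathcal{R}_1}$, with a continuous monotone $f^*$ encoded by a finitary $\check f:\mathcal{AR}\to[\om]^{<\om}$. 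Let $\mathcal{F}$ be the set of $a\in\mathcal{AR}|X$ that are $\sqsubseteq$-minimal with $\check f(a)\ne\emptyset$; by the Abstract Nash-Williams Theorem (Theorem \ref{thm.ANW}) and the density/genericity of $\mathcal{U}_{\mathcal{R}_1}$, $\mathcal{F}$ is a front on $\mathcal{U}_{\mathcal{R}_1}$, $\mathcal{U}_{\mathcal{R}_1}\re\mathcal{F}$ is an ultrafilter on the countable base $\mathcal{F}$, and setting $g(a):=\min\check f(a)$, Theorem \ref{thm.50} gives that $\mathcal{V}$ is generated by $\{g(\mathcal{F}|Y):Y\in\mathcal{U}_{\mathcal{R}_1}\}$.

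The technical core is then a canonical-equivalence-relations theorem for fronts on $\mathcal{R}_1$, analogous to Pudlak--R\"odl. The target statement is: for every equivalence relation $E$ on a front $\mathcal{F}\subseteq\mathcal{AR}$, there is an $X\in\mathcal{R}_1$ and an \emph{irreducible} function $\varphi$ defined on $\mathcal{F}|X$ (where $\varphi(a)$ is a specified subconfiguration of the tree $a$, and the image $\{\varphi(a):a\in\mathcal{F}|X\}$ is Sperner in $\mathcal{AR}$) such that $a\,E\,b\Llra \varphi(a)=\varphi(b)$ on $\mathcal{F}|X$. The proof is by induction on the rank of $\mathcal{F}$, using \textbf{A.4} for $\mathcal{R}_1$ (which I verified above reduces to the finite Ramsey theorem) to stabilize, block by block, which level-$1$ stem and which level-$2$ leaves of each coordinate $a(i)$ are actually recorded by $E$. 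The \emph{main obstacle} is the case analysis: within each block one must decide whether $\varphi$ records the whole subtree $a(i)$, only its level-$1$ node, only some level-$2$ leaves, or nothing, and then one must show that the resulting family of possible $\varphi$'s is countable and that no ``mixed'' irreducible projections across blocks survive, which rests on the strict block-preserving nature of $\le_{\mathcal{R}_1}$.

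Once the canonization theorem is proved, the classification in (2) and the RK structure in (3) follow. Apply it to the equivalence relation on $\mathcal{F}$ induced by $g$ to obtain $\varphi$ and $X\in\mathcal{U}_{\mathcal{R}_1}$; then $\mathcal{V}\cong\varphi(\mathcal{U}_{\mathcal{R}_1}\re\mathcal{F})=\mathcal{U}_{\mathcal{R}_1}\re\mathcal{B}$ where $\mathcal{B}=\{\varphi(a):a\in\mathcal{F}|X\}$ is a barrier of some countable rank. If every $\varphi(a)$ is contained in the level-$1$ nodes of $a$, then $\varphi$ factors through the projection of $\mathcal{R}_1$ to stems and the resulting ultrafilter is a Fubini iterate of $\mathcal{U}_0$, so $\mathcal{V}\equiv_{RK}\mathcal{U}_0^\al$ and $\mathcal{V}\equiv_T\mathcal{U}_0$. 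Otherwise $\varphi$ records at least one level-$2$ leaf in each (generic) block, and one recovers $\mathcal{U}_{\mathcal{R}_1}$ from $\mathcal{V}$ by a monotone cofinal map in the reverse direction, giving $\mathcal{V}\equiv_T \mathcal{U}_{\mathcal{R}_1}$; in this case $\mathcal{V}$ is RK equivalent to a Fubini iterate built from the countable collection $\mathcal{Y}_{\mathcal{R}_1}$ of p-points arising as the projections $\varphi(\mathcal{U}_{\mathcal{R}_1})$ over the finitely many possible one-block shapes of $\varphi$. That these are all p-points follows from the fact that $\mathcal{U}_{\mathcal{R}_1}$ is $\sigma$-closed-generic and from Theorem \ref{thm.20}. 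This completes the two-level Tukey dichotomy together with the full RK description inside each Tukey class.
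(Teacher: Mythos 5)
Your proposal follows the paper's own proof essentially step for step: the forcing equivalence via the block-coding map $\theta$, the five-step schema (continuous cofinal maps via Theorem \ref{thm.tim}, the front $\mathcal{F}$ of minimal approximations with $\hat{f}(a)\ne\emptyset$, the RK map $g$ together with Theorem \ref{thm.50}), canonization of equivalence relations on fronts by projections to subtrees of the blocks $\bT(n)$, and the concluding dichotomy between Fubini powers of $\mathcal{U}_0$ and Fubini iterates of members of $\mathcal{Y}_{\mathcal{R}_1}$. This is the same route the paper takes, so no further comparison is needed.
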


The proof follows the same five steps as in Subsection \ref{subsection.initialTRKstructure}.
For the rest of this section, let $\mathcal{U}$ denote $\mathcal{U}_{\mathcal{R}_1}$.
Let $\mathcal{V}$ be a nonprincipal ultrafilter on base set $\om$ such that  $\mathcal{V}\le_T\mathcal{U}$.
For Step (1),  it is shown that
for each monotone cofinal map $f:\mathcal{U}\ra\mathcal{V}$,
there is a member $X\in\mathcal{U}$ such
$f$ is continuous when restricted to $\mathcal{U}\re X$.
With a bit more work, one can show that in fact there is montone cofinal  map $f:\mathcal{R}_1\ra [\om]^{<\om}$ witnessing this Tukey reduction which is continuous with respect to the metric topology on $\mathcal{R}_1$;
that is the topology generated by basic open sets of the form $[a,\bT]$, where $a\in\mathcal{AR}$.
Thus, there is a finitary map $\hat{f}:\mathcal{AR}\ra[\om]^{<\om}$
which recovers $f$ as follows:
for each $X\in\mathcal{R}_1$,
\begin{equation}
f(X)=\bigcup_{k<\om}\hat{f}(r_k(X)).
\end{equation}

For Step (2),
let $\mathcal{F}$ be the front on $\mathcal{R}_1$ defined as follows:
For each $X\in\mathcal{R}_1$,
let $n(X)$ be the least integer such that $\hat{f}(r_{n(X)}(X))\ne\emptyset$.
Define
\begin{equation}
\mathcal{F}=\{r_{n(X)}(X):X\in\mathcal{R}_1\}.
\end{equation}
Then $\mathcal{F}$ is a front, since each member of $\mathcal{R}_1$ has a finite initial segment in $\mathcal{F}$,
and by minimality of $n(X)$ no member of $\mathcal{F}$  is a proper initial segment of another member of $\mathcal{F}$.

Steps (3) and (4) are the same for each topological Ramsey space.
Define
\begin{equation}
\mathcal{F}|X=\{a\in\mathcal{F}:\exists Y\le X,\ \exists n<\om\,  (a=r_n(Y))\}.
\end{equation}
Then let
$\mathcal{U}\re\mathcal{F}$ be the filter on the countable base set  $\mathcal{F} $
generated by $\{\mathcal{F}|X:X\in\mathcal{U}\}$.
By genericity of $\mathcal{U}$,
$\mathcal{U}\re\mathcal{F}$ is an ultrafilter.
The set-up to Step (4) is the same as in
Subsection  \ref{subsection.initialTRKstructure}.
Define $g:\mathcal{F}\ra \om$ by $g(a)=\min(\hat{f}(a))$, for each $a\in\mathcal{F}$.
By Theorem \ref{thm.50}, if $\mathcal{V}$ is nonprincipal then
the RK image of $\mathcal{U}\re\mathcal{F}$ under $g$,
$g(\mathcal{U}\re\mathcal{F})$, equals $\mathcal{V}$.

Now we want to understand  these maps $g$  so that we can understand the isomorphism class of $\mathcal{V}$ for Step (5).
$g$ induces an equivalence relation on the front $\mathcal{F}$.
As in the Ellentuck space, there is a notion of canonical equivalence relation for $\mathcal{R}_1$,
found in \cite{Dobrinen/Todorcevic14} and described now.
Since it is dense in $\mathcal{R}_1$ to find a $Z$ such that the equivalence relation $\mathcal{F}|Z$ is canonical, such a $Z$ will be in $\mathcal{U}$.
Assume then that $Z\in\mathcal{U}$ and $g$ is canonical on $\mathcal{F}|Z$.
We now describe these canonical equivalence relations proved in \cite{Dobrinen/Todorcevic14} beginning  with some simple examples, building up intuition for  the general case.

First we describe the canonical equivalence relations on the set $\mathcal{R}_1(n)$ as these are the building blocks for the canonical equivalence relations on fronts.

\begin{defn}[Canonical equivalence relations on $\mathcal{R}_1(n)$, \cite{Dobrinen/Todorcevic14}]
\label{defn.R(n)canonical}
Let $T$ be any subtree of $\bT(n)$.
Given $a(n)\in\mathcal{R}_1(n)$,
let $\pi_T(a(n))$ be the projection of the tree $a(n)$
to the nodes in the same position as $T$ inside $\bT(n)$.
That is, if $\iota:\bT(n)\ra a(n)$ is the tree isomorphism between them,
then $\pi_T(a(n))=\iota''T$.
The canonical equivalence relation $\E_T$ on $\mathcal{R}_1(n)$ is defined by
$a(n)\, \E_T\, b(n)$ if and only if  $\pi_T(a(n))=\pi_T(b(n))$.
\end{defn}

If $\mathcal{F}=\mathcal{AR}_1$, then
 the canonical equivalence relations  on $\mathcal{AR}_1$ are simply those given by
subtrees of $\bT(0)=\{\lgl\rgl,\lgl 0 \rgl,\lgl 0,0\rgl\}$.
Thus, there are three canonical equivalence relations on $\mathcal{AR}_1$.
By density, we may without loss of generality assume that $g$ induces a canonical equivalence relation on all of $\mathcal{AR}_1$; say this is given by $\pi_T$.
If $T=\{\lgl\rgl\}$, then
all members of $\mathcal{AR}_1$ are equivalent; this means that $g$ is a constant function.
In this case, $\mathcal{V}$  is a  principal ultrafilter generated by the singleton $g(a(0))$ for  each/every $a(0)\in\mathcal{AR}_1$.
If $T=\{\lgl \rgl, \lgl 0\rgl\}$, then
two members $a(0),b(0)\in
\mathcal{AR}_1$ are $\E_T$ equivalent
if and only if their level $1$ nodes are equal.
For example, if $a(0)=\{\lgl \rgl, \lgl 3\rgl,\lgl 3,0\rgl\}$ and $b(0)=\{\lgl \rgl, \lgl 3\rgl,\lgl 3,2\rgl\}$,
then they are equivalent,
but if  $a(0)=\{\lgl \rgl, \lgl 3\rgl,\lgl 3,0\rgl\}$ and $b(0)=\{\lgl \rgl, \lgl 4\rgl,\lgl 4,0\rgl\}$,
then they are not equivalent.
This means that $g$ is (up to  permutation) the projection map from the maximal nodes in $\bT$ to the the nodes in level $1$ of $\bT$.
This projection map yields the Ramsey ultrafilter $\mathcal{U}_0$.
Hence, in this case, $\mathcal{V}=g(\mathcal{U}\re\mathcal{AR}_1)$ which is the ultrafilter on base $\{\lgl n\rgl :n<\om\}$ generated by the projections of the members of $\mathcal{U}$ to their first levels.
This is $RK$ equivalent to
$\mathcal{U}_0$.
If $T=\{\lgl\rgl,\lgl 0\rgl, \lgl 0,0\rgl\}$,
then
 this means that $g$ is one-to-one on $\mathcal{AR}_1$.
Notice that $\mathcal{U}$ is isomorphic to the ultrafilter $\mathcal{U}\re\mathcal{AR}_1$.
Thus, if $g$ is one-to-one, then
$\mathcal{V}=g(\mathcal{U}\re\mathcal{AR}_1)
\equiv_{RK} \mathcal{U}_1$.

If $\mathcal{F}=\mathcal{AR}_2$,
then the possible canonical equivalence relations are given by two independent  canonical equivalence relations: on the $0$-th subtrees $\mathcal{R}_1(0)$ and on the first subtrees $\mathcal{R}_1(1)$.
Thus, there are five canonical equivalence relations on $\mathcal{R}_1(1)$.
Here, we start to see the more general pattern emerging.

\begin{fact}\label{fact.urcool}
For each $n$, the sets $\mathcal{R}_1(n)|X$, $X\in\mathcal{U}$, generate an ultrafilter on base set $\mathcal{R}_1(n)$.
Denote these as $\mathcal{U}\re \mathcal{R}_1(n)$.
\end{fact}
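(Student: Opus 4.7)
The plan is to adapt the proof of Fact~\ref{thm.uar1uf} (which handled the case $n=0$, i.e.\ the base set $\mathcal{AR}_1$), replacing $\mathcal{AR}_1$ by $\mathcal{AR}_{n+1}$ and invoking the Abstract Nash-Williams Theorem at level $n+1$.  The argument splits into verifying the finite intersection property (to get a filter) and the ultrafilter dichotomy (to upgrade it).

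First, to see that the family $\{\mathcal{R}_1(n)|X : X \in \mathcal{U}\}$ has the finite intersection property, fix $X_1,\dots,X_k \in \mathcal{U}$ and use $\le^*_{\mathcal{R}_1}$-directedness of $\mathcal{U}$ to choose $X \in \mathcal{U}$ with $X \le^*_{\mathcal{R}_1} X_i$ for each $i$.  By Definition~\ref{def.almostred} there is $a \in \mathcal{AR}|X$ with $[a,X] \sse [a,X_i]$ for every $i$.  Axiom \textbf{A.3} guarantees some $Z \in [a,X]$, so $Z \le_{\mathcal{R}_1} X_i$ for all $i$, which places the $n$-th subtree $Z(n)$ in $\bigcap_{i \le k}\mathcal{R}_1(n)|X_i$; in fact this intersection is infinite as $Z$ varies in $[a,X]$.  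Hence the family generates a proper filter $\mathcal{U}\re\mathcal{R}_1(n)$.

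To upgrade this filter to an ultrafilter, fix an arbitrary $\mathcal{A} \sse \mathcal{R}_1(n)$, and produce $Y \in \mathcal{U}$ for which $\mathcal{R}_1(n)|Y$ lies wholly in $\mathcal{A}$ or wholly in its complement.  Pull $\mathcal{A}$ back by setting $\mathcal{O} := \{a \in \mathcal{AR}_{n+1} : a(n) \in \mathcal{A}\}$, yielding a two-piece partition of $\mathcal{AR}_{n+1}$.  Because all members of $\mathcal{AR}_{n+1}$ share the common length $n+1$, axiom \textbf{A.1}(c) makes $\mathcal{AR}_{n+1}$ a Nash-Williams front on $\mathcal{R}_1$, so Theorem~\ref{thm.ANW} and Definition~\ref{defn.5.16}(3) deliver, for any $X \in \mathcal{R}_1$, some $Y \le_{\mathcal{R}_1} X$ with $\mathcal{AR}_{n+1}|Y$ entirely inside one piece.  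Using the identification $\mathcal{R}_1(n)|Y = \{a(n) : a \in \mathcal{AR}_{n+1}|Y\}$ and the definition of $\mathcal{O}$, this translates to $\mathcal{R}_1(n)|Y \sse \mathcal{A}$ or $\mathcal{R}_1(n)|Y \cap \mathcal{A} = \emptyset$.

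The set $\mathcal{D}_{\mathcal{A}}$ of such witnesses $Y$ is therefore dense in $(\mathcal{R}_1, \le_{\mathcal{R}_1})$, and since $Y \le_{\mathcal{R}_1} X$ implies $Y \le^*_{\mathcal{R}_1} X$, it is dense in $(\mathcal{R}_1, \le^*_{\mathcal{R}_1})$ as well.  By genericity of $\mathcal{U}$ some $Y \in \mathcal{U} \cap \mathcal{D}_{\mathcal{A}}$ exists, placing $\mathcal{A}$ or $\mathcal{R}_1(n) \setminus \mathcal{A}$ in $\mathcal{U}\re\mathcal{R}_1(n)$.  The main (and essentially only) obstacle is the correct reduction of the coloring of $\mathcal{R}_1(n)$ to a coloring of the front $\mathcal{AR}_{n+1}$ and the bookkeeping that restriction $|Y$ passes through the projection $a \mapsto a(n)$; everything else is a direct parallel to Fact~\ref{thm.uar1uf}.
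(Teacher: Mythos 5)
Your argument is correct and follows exactly the template the paper itself uses: the paper states this fact without separate proof, relying on the same argument it gives for Fact~\ref{thm.uar1uf} (homogenize a two-piece partition via the Abstract Ellentuck/Nash--Williams Theorem, then apply density and genericity of $\mathcal{U}$), and your adaptation to the front $\mathcal{AR}_{n+1}$ with the pullback $\mathcal{O}=\{a:a(n)\in\mathcal{A}\}$ is precisely the intended generalization. The only points you gloss over (a single $a$ witnessing $X\le^* X_i$ for all $i$ simultaneously, and the fact that $\sigma$-closure ensures every $\mathcal{A}\sse\mathcal{R}_1(n)$ in the extension lies in the ground model) are standard and handled at the same level of detail as in the paper.
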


These are actually  p-points, as genericity of $\mathcal{U}$ ensures pseudointersections.

$g$ on $\mathcal{AR}_2$ is canonized by $(\E_{T_0},\E_{T_1})$, where $T_0$ is a subtree of $\bT(0)$ and $T_1$ is a subtree of $\bT(1)$.
With some work,
one can check that the sets $\{\pi_{T_0}(a(0)):a(0)\in\mathcal{R}_1|X\}$, $X\in\mathcal{U}_1$,  generate an ultrafilter on base set $\{\pi_{T_0}(a(0)):a(0)\in\mathcal{R}_1(0)\}$;
and the sets $\{\pi_{T_1}(a(1)):a(1)\in\mathcal{R}_1|X\}$, $X\in\mathcal{U}_1$, generate an ultrafilter on base set
$\{\pi_{T_1}(a(1)):a(1)\in\mathcal{R}_1(1)\}$.
Denote these by $\pi_{T_i}(\mathcal{U}_1\re\mathcal{R}_1(i))$, $i\in 2$.
For both $i\in 2$,
if $T=\{\lgl\rgl\}$,
then the ultrafilter  $\pi_T(\mathcal{U}_1\re \mathcal{R}_1(i))$ is  principal.
If $T=\{\lgl \rgl,\lgl i\rgl\}$,
then $\pi_T(\mathcal{U}_1\re \mathcal{R}_1(i))$ is
isomorphic to the Ramsey ultrafilter $\mathcal{U}_0$.
For $T_1$ equal to $\{\lgl \rgl, \lgl 1\rgl, \lgl 1,0\rgl\}$ or $\{\lgl \rgl, \lgl 1\rgl, \lgl 1,1\rgl\}$,
$\pi_{T_1}(\mathcal{U}_1\re \mathcal{R}_1(1))$ is
isomorphic to $\mathcal{U}_1$.
The new ultrafilter we now see is in the case of $T_1=\bT(1)$, in which case $\pi_T(\mathcal{U}_1\re \mathcal{R}_1(1))
=\mathcal{U}_1\re\mathcal{R}_1(1)$, which is
a p-point.

With some more work, one finds that
$\mathcal{V}$ is isomorphic to a Fubini product of two ultrafilters which are either p-points, Ramsey, or principal.
We write $\mathcal{U}*\mathcal{V}$ to denote the Fubini product $\lim_{n\ra\mathcal{U}}\mathcal{V}_n$ where for all $n$, $\mathcal{V}_n=\mathcal{V}$.

\begin{equation}
\mathcal{V}=g(\mathcal{U}_1\re\mathcal{AR}_2)
=\pi_{T_0}(\mathcal{U}_1\re\mathcal{R}_1(0))*\pi_{T_1}(\mathcal{U}_1\re\mathcal{R}_1(1)).
\end{equation}

In general, the canonical equivalence relations on $\mathcal{R}_1(n)$ are given by the subtrees of $\bT(n)$.
There are $2^{n+1}+1$ many of subtrees of $\bT(n)$, hence that many canonical equivalence relations on the $n$-th subtrees.
As in the cases for $\mathcal{R}_1(0)$ and $\mathcal{R}_1(1)$,
for any $n<\om$,
if the canonical equivalence relation on $\mathcal{R}_1(n)$ is given by $T_n=\{\lgl\rgl\}$, then the projection map induces a principal ultrafilter.
If $T_n=\{\lgl\rgl, \lgl n\rgl\}$,
then the projection map induces an ultrafilter isomorphic to the Ramsey ultrafilter $\mathcal{U}_0$.
If $T_n=\{\lgl\rgl,\lgl n\rgl\}\cup\{\lgl n,i\rgl:i\in I_n\}$ where $I_n$ is some nonempty subset of $n+1$,
then letting $k=|I_n|$,
the projection map induces an ultrafilter isomorphic to the p-point $\mathcal{U}_1\re\mathcal{R}_1(k-1)$.

Thus, if $\mathcal{F}$ is equal to $\mathcal{AR}_m$ for some $m$,
then there are trees $T_i\sse \bT(i)$ for each $i<m$ such that
\begin{equation}
\mathcal{V}\equiv_{RK}\pi_{T_0}(\mathcal{U}_1\re\mathcal{R}_1(0))*\cdots*
\pi_{T_{m-1}}(\mathcal{U}_1\re\mathcal{R}_1(m-1)).
\end{equation}
Each of these ultrafilters in the $m$-iterated Fubini product is either principal, $\mathcal{U}_0$, or $\mathcal{U}_0\re\mathcal{R}_1(k)$ for some $k$.

Now it may well be that the front $\mathcal{F}$ is not of the form $\mathcal{AR}_m$ for any $m$, but is more complex.
In this case, we still can do an analysis, using the complexity (uniform rank) of the front to conclude that $\mathcal{V}$ is isomorphic to a countable iteration of Fubini products where each ultrafilter  in the construction is a member of the collection
\begin{equation}
\mathcal{Y}_{\mathcal{R}_1}=\{1,\mathcal{U}_1\}\cup\{\mathcal{U}_1\re
\mathcal{R}_1(k):k<\om\},
\end{equation}
where $1$ denotes any principal ultrafilter.
This completely classifies the isomorphism types within the Tukey types of ultrafilters Tukey reducible to $\mathcal{U}_1$.


\section{Ultrafilters of Blass constructed by $n$-square forcing and extensions to hypercube forcings}

In the study of the structure of the Rudin-Keisler classes of p-points, Blass showed that not only can there be chains of order type $\om_1$ and $\mathbb{R}$,
but also that there can be Rudin-Keisler incomparable p-points.
In \cite{Blass73}, Blass
proved that assuming Martin's Axiom,
there is a p-point which has two Rudin-Keisler incomparable p-points RK below it.
We will call this forcing {\em $n$-square forcing}, $\bP_{\mathrm{square}}$, since a subset $X\sse \om\times\om$ is in the partial ordering $\bP_{\mathrm{square}}$ if and only if for each $n<\om$ there are sets $a,b$ each of size $n$ such that the product $a\times b$ is a subset of $X$.
$\bP$ is partially ordered by $\sse$.
The projections to the first  and second coordinates yield the two RK-incomparable p-points.
The p-point  obtained from this construction satisfies the partition relation
\begin{equation}
\mathcal{U}\ra(\mathcal{U})^2_{k,5}.
\end{equation}

The forcing $\bP_{\mathrm{square}}$ contains a dense subset which forms a topological Ramsey space denoted $\mathcal{H}^2$, which appears in
\cite{TrujilloThesis} and  \cite{Dobrinen/Mijares/Trujillo14}.
The members of this space are essentially infinite sequences which are a
 product of  two members of $\mathcal{R}_1$ in the following sense.
Let $\bT^2$ be the sequence of trees $\lgl \bT^2(n):n<\om\rgl$ such that
for each $n<\om$,
\begin{equation}
\bT^2(n)=\{\lgl\rgl,\lgl n\rgl\}\cup\{\lgl n,\lgl i,j\rgl\rgl:i,j\in n+1\}.
\end{equation}
$\bT^2(n)$ should  be thought of as a tree with height two where  levels $0$ and $1$ have one node, and level $2$ has an $(n+1)\times(n+1)$ square of nodes.
A sequence
$X=\lgl X(n):n<\om\rgl$
 is a member of $\mathcal{H}^2$ if and only if
it is a subtree of $\bT^2$ with the same structure as $\bT^2$.
Specifically, $X\in\mathcal{H}^2$ if and only if
 there is a strictly increasing sequence $(k_n)$ such that
 for each $n<\om$,
\begin{equation}
X(n)=\{\lgl\rgl, \lgl k_n\rgl\}\cup\{\lgl k_n,\lgl i,j\rgl\rgl:i\in I_n,\ j\in J_n\},
\end{equation}
 where $I_n, J_n\in [k_n+1]^{n+1}$.
We call $X(n)$ the {\em $n$-th block} of $X$.
For $X$ and $Y$ in $\mathcal{H}^2$,
$Y\le X$ if and only if for each $n$ there is a $k_n$ such that $Y(n)\sse X(k_n)$ and moreover, the sequence $(k_n)_{n<\om}$ is strictly increasing.
However, by the structure of the members of $\mathcal{H}^2$, it turns out that $\le$ is the same as $\sse$.

For the space $\mathcal{H}^2$,
the almost reduction $\le^*$
is simply $\sse^*$.
The forcing $(\mathcal{H}^2,\le^*)$ is $\sigma$-complete and  produces a new p-point, $\mathcal{U}_2$.
The RK structure below $\mathcal{U}_2$ is a diamond shape.   $\mathcal{U}_2$  has two  RK incomparable predecessors, namely the projections to the first and second directions.
These projected ultrafilters are actually generic for $(\mathcal{R}_1,\le_{\mathcal{R}_1}^*)$.
The projection to the nodes of length one produces a Ramsey ultrafilter.
Thus, we see that the structure of the RK classes reducible to  $\mathcal{U}_2$ includes the structure of the Boolean algebra $\mathcal{P}(2)$.
In fact,
Ramsey-theory techniques along with the canonical equivalence relations, similar  to those in the previous section,
allow us to deduce that these are the only RK types of ultrafilters RK reducible to $\mathcal{U}_2$.

Similarly to the space $\mathcal{R}_1$,
for each $n$, there is an ultrafilter $\mathcal{U}_2\re\mathcal{H}^2(n)$ which is the ultrafilter on base set $\mathcal{H}^2(n)$ generated by the sets
\begin{equation}
\mathcal{H}^2(n)|X:=\{a(n):\exists Y\le X\, (a=r_{n+1}(Y))\}.
\end{equation}
Each of these ultrafilters is a p-point.
We point out that the ultrafilter $\mathcal{U}_2$ is isomorphic to the ultrafilter $\mathcal{U}_2\re\mathcal{H}^2(0)$.

The canonical equivalence relations on the $n$-th blocks $\mathcal{H}^2(n)=\{X(n):X\in\mathcal{H}^2\}$
are given  by canonical projections of the following forms.
Recall the \Erdos-Rado canonical projections on finite sets of natural numbers:
Given $I\sse n+1$,
for any $c=\{c_0,\dots,c_n\}$,
$\pi_I(c)=\{c_i:i\in I\}$.
Suppose $a(n)$ is a member of $\mathcal{H}^2(n)$ and
$a(n)=\{\lgl\rgl, \lgl k\rgl\}\cup\{\lgl k,\lgl i,j\rgl\rgl:i\in I_a, j\in J_a \}$.
Given  $T_0, T_1$  subtrees of  $\bT(n)$ (the $n$-th block of the tree $\bT$ from $\mathcal{R}_1$)
define
\begin{equation}\label{eq.31}
\pi_{T_0,T_1}(a(n))= \{\lgl\rgl\}\mathrm{\ if\ } T_0=T_1=\{\lgl\rgl\}
\end{equation}
\begin{equation}\label{eq32}
\pi_{T_0,T_1}(a(n))= \{\lgl \rgl, \lgl k\rgl\}\mathrm{\ if \ } T_0=T_1=\{\lgl\rgl,\lgl k\rgl\}
\end{equation}
\begin{align}\label{eq.33}
\pi_{T_0,T_1}(a(n))= &\{\lgl \rgl,\lgl k\rgl\}\cup\{ \lgl k,i\rgl:i\in \pi_{I_0}(I_a)\}\cr
& \ \ \mathrm{\ if \ }
T_0=\{\lgl\rgl,\lgl k\rgl\}\cup\{ \lgl k,i\rgl:i\in I_0\}
\mathrm{\ and\ }
T_1=\{\lgl\rgl,\lgl k\rgl\}
\end{align}
\begin{align}\label{eq.34}
\pi_{T_0,T_1}(a(n))= & \{\lgl \rgl,\lgl k\rgl\}\cup\{ \lgl k,j\rgl:j\in \pi_{I_1}(J_a)\}\cr
& \ \ \mathrm{\ if \ }T_0=\{\lgl\rgl,\lgl k\rgl\}
 \ \ \mathrm{\ and\ }
T_1=\{\lgl\rgl,\lgl k\rgl, \lgl k,i\rgl:i\in I_1\}
\end{align}
\begin{equation}\label{eq.35}
\pi_{T_0,T_1}(a(n))= \{\lgl \rgl,\lgl k\rgl\}\cup\{ \lgl k,\lgl i,j\rgl\rgl:i\in\pi_{I_0}(I_a),\ j\in \pi_{I_1}(J_a)\}.
\end{equation}
An equivalence relation $\E$ on $\mathcal{H}^2(n)$ is {\em canonical} if
 for each $i\in\{0,1\}$, there is some tree $T_i\sse \bT(n)$ such that
for $a(n),b(n)\in\mathcal{H}^2(n)$,
\begin{equation}\label{eq.defcanon}
a(n)\, \E\,  b(n)
\Longleftrightarrow
\pi_{T_0,T_1}(a(n))=\pi_{T_0,T_1}(b(n)).
\end{equation}

The {\em initial Rudin-Keisler structure} below $\mathcal{U}_2$ is the collection of all isomorphism types of nonprincipal ultrafilters RK reducible to $\mathcal{U}_2$.
This turns out to be exactly the shape of $(\mathcal{P}(2),\sse)$, that is, a diamond shape.
Since $\mathcal{U}_2$ is isomorphic to $\mathcal{U}_2\re\mathcal{H}^2(0)$,
if
 $\mathcal{V}\le_{RK}\mathcal{U}_2$,
then there is a map $h$ from $\mathcal{H}^2(0)$ into $\om$ such that
$h(\mathcal{U}_2\re\mathcal{H}^2(0))
=\mathcal{V}$.
Without loss of generality, we may assume that $h$ is canonical, represented by some projection map $\pi_{T_0,T_1}$.
If both of $T_0,T_1$ are $\{\lgl\rgl\}$, then the RK image of $\mathcal{U}_2$ is a principal ultrafilter.
If  both are  $\{\lgl\rgl,\lgl 0\rgl\}$  then
 the $h$-image of $\mathcal{U}_2$ is a  Ramsey ultrafilter.
If $T_0=\{\lgl\rgl,\lgl 0\rgl,\lgl 0,0\rgl\}$
and $T_1=\{\lgl\rgl,\lgl 0\rgl\}$  then
 the $h$-image of $\mathcal{U}_2$ is isomorphic to the ultrafilter forced by $(\mathcal{R}_1,\le^*_{\mathcal{R}_1})$ and hence is weakly Ramsey; denote this as $\mathcal{V}_0$.
Likewise if
$T_1=\{\lgl\rgl,\lgl 0\rgl,\lgl 0,0\rgl\}$
and $T_0=\{\lgl\rgl,\lgl 0\rgl\}$  then
 the $h$-image of $\mathcal{U}_2$
 $\mathcal{V}_1$.
Lastly, if both $T_0=T_1=\{\lgl\rgl,\lgl 0\rgl,\lgl 0,0\rgl\}$
the  $h$-image is isomorphic to $\mathcal{U}_2$.
Thus, we find the exact structure of the RK types below $\mathcal{U}_2$.
We call this an initial RK structure, since it is downwards closed in the RK classes.

The following ultrafilters form the building blocks
for understanding
 the Tukey types of ultrafilters Tukey reducible to $\mathcal{U}_2$.
The canonical projections applied to the p-point $\mathcal{U}_2\re\mathcal{H}^2(n)$
yield the following p-points.
Let $T_0$ and $T_1$ be subtrees of $\bT(n)$.
If $T_0=T_1=\{\lgl\rgl\}$,
then $\pi_{T_0,T_1}(\mathcal{U}_2\re\mathcal{H}^2(n))$ is simply a principal ultrafilter.
If $T_0=T_1=\{\lgl\rgl,\lgl n\rgl\}$,
then $\pi_{T_0,T_1}(\mathcal{U}_2\re\mathcal{H}^2(n))$ is
isomorphic to the projected Ramsey ultrafilter $\mathcal{U}_0$, similarly to the space $\mathcal{R}_1$.
If $T_0$ and $T_1$ are as in Equation (\ref{eq.33}),
then
 $\pi_{T_0,T_1}(\mathcal{U}_2\re\mathcal{H}^2(n))$
is isomorphic to $\mathcal{U}_1\re\mathcal{R}_1(l)$, where $l=|I_0|$.
Likewise, if
 $T_0$ and $T_1$ are as in Equation (\ref{eq.34}),
then $\pi_{T_0,T_1}(\mathcal{U}_2\re\mathcal{H}^2(n))$
is isomorphic to $\mathcal{U}_1\re\mathcal{R}_1(l)$, where $l=|I_1|$.
If $T_0$ and $T_1$ are as in Equation (\ref{eq.35}),
then  $\pi_{T_0,T_1}(\mathcal{U}_2\re\mathcal{H}^2(n))$
is  a new type of p-point which has as base set the collection $\{\pi_{T_0,T_1}(a(n)):a(n)\in\mathcal{H}^2(n)\}$
and is generated by the sets
$\pi_{T_0,T_1}(\mathcal{H}^2(n)|X):=\{\pi_{T_0,T_1}(a(n)):\exists Y\le X\, (a=r_{n+1}(Y))\}$, $X\in\mathcal{U}_2$.
These are finite  trees of height two which have $|I_0|\times|I_1|$ rectangles as their maximal nodes.
Let $\mathcal{Y}^2(n)$ denote the collection of all these ultrafilters obtained by canonical projections on $\mathcal{H}^2(n)$.
Note that $\mathcal{Y}^2(n)$ is finite.

The Tukey types are handled similarly as for $\mathcal{R}_1$.
Each  monotone cofinal map from $\mathcal{U}_2$ to an ultrafilter $\mathcal{V}$ on $\om$
is continuous when restricted  below some member of $\mathcal{U}_2$.
As in the case of $\mathcal{R}_1$,
there is some front $\mathcal{F}$ and a function $g:\mathcal{F}\ra\om$  such that
$\mathcal{V}= g(\mathcal{U}_2\re \mathcal{F})$.
Again, $g$ may be assumed to be canonical, either by a forcing argument or construction some extra hypothesis like CH or less.
For fronts $\mathcal{F}$ of the form $\mathcal{AR}_m$,
$g$ is canonized by a sequence $(\pi_n:n\le m)$ of canonical projection maps so that
$g(\mathcal{U}_2\re \mathcal{F})$ is Rudin-Keisler equivalent to the Fubini iteration
\begin{equation}
\pi_0(\mathcal{U}_2\re\mathcal{H}^2(0))*\dots*
\pi_{m-1}(\mathcal{U}_2\re\mathcal{H}^2(m))
\end{equation}
where $\pi_n$ is one of the canonical projection maps on $\mathcal{H}^2(n)$.

\begin{thm}[Dobrinen/Trujillo, \cite{TrujilloThesis}]\label{thm.H^2}
Let $\mathcal{U}_2$ be the ultrafilter forced by $(\mathcal{H}^2,\le^*)$.
Then
\begin{enumerate}
\item
If $\mathcal{W}\le_{RK}\mathcal{U}_2$,
then $\mathcal{W}$ is isomorphic to one of $\mathcal{U}_2$, $\mathcal{V}_0$, $\mathcal{V}_1$, the projected Ramsey ultrafilter, or a principal ultrafilter.
Thus, the initial RK structure of the nonprincipal ultrafilters RK reducible to  $\mathcal{U}_2$ is simply the structure of the Boolean algebra $\mathcal{P}(2)$.
\item
If  $\mathcal{W}\le_T\mathcal{U}_2$,
then $\mathcal{W}$ is Tukey equivalent  to one of $\mathcal{U}_2$, $\mathcal{V}_0$, $\mathcal{V}_1$, the projected Ramsey ultrafilter, or a principal ultrafilter.
Thus, the initial Tukey  structure of the nonprincipal ultrafilters Tukey reducible to  $\mathcal{U}_2$ is simply the structure of the Boolean algebra $\mathcal{P}(2)$.
\item
If $\mathcal{W}\le_T\mathcal{U}_2$,
then the Rudin-Keisler classes inside of $\mathcal{W}$ are exactly those of the countable iterates of Fubini products of ultrafilters from the countable collection $\bigcup_{n<\om}\mathcal{Y}^2(n)$.
\end{enumerate}
\end{thm}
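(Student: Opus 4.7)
The plan is to follow the five-step template used for Ramsey ultrafilters in Subsection \ref{subsection.initialTRKstructure} and refined for $\mathcal{R}_1$ in Section \ref{section.R_1}. Fix a nonprincipal ultrafilter $\mathcal{W}$ on $\om$ with $\mathcal{W}\le_T\mathcal{U}_2$, witnessed by a monotone cofinal map $f:\mathcal{U}_2\ra\mathcal{W}$. First, apply Theorem \ref{thm.tim}: since the sets $d(X)=\{\depth_A(r_n(X)):n<\om\}$ generate a nonprincipal ultrafilter on $\om$ by genericity, $\mathcal{U}_2$ has basic Tukey reductions, so after passing below some $X\in\mathcal{U}_2$ we may assume $f$ is continuous with finitary approximation $\check{f}:\mathcal{AR}\ra[\om]^{<\om}$. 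Second, form the front $\mathcal{F}=\{r_{n(X)}(X):X\in\mathcal{H}^2\}$ where $n(X)$ is minimal with $\check{f}(r_{n(X)}(X))\ne\emptyset$; the Nash-Williams property follows from minimality, and by Theorem \ref{thm.50} the ultrafilter $\mathcal{W}$ coincides with $g(\mathcal{U}_2\re\mathcal{F})$ where $g(a)=\min(\check{f}(a))$.

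The main obstacle is the third step: proving a \Pudlak-\Rodl-style canonization theorem for equivalence relations on fronts of $\mathcal{H}^2$. The assertion to establish is that for any equivalence relation $\E$ on a front $\mathcal{F}\sse\mathcal{AR}$, there exists $Z\in\mathcal{U}_2$ and an irreducible map on $\mathcal{F}|Z$ which is blockwise given by the canonical projection maps $\pi_{T_0,T_1}$ of Equations (\ref{eq.31})--(\ref{eq.35}). The proof proceeds by induction on the uniform rank of $\mathcal{F}$, fusing the Abstract Nash-Williams Theorem (Theorem \ref{thm.ANW}) applied blockwise with the \Erdos-Rado Theorem applied independently in each of the two coordinates of $\mathcal{H}^2(n)$; the independence of the two directions inside a block is the crucial structural point that distinguishes this argument from the corresponding $\mathcal{R}_1$ canonization and is what produces the five canonical types per block rather than three.

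Fourth, with the blockwise canonization in hand, I would read off the isomorphism class of $\mathcal{W}$ by identifying $g(\mathcal{U}_2\re\mathcal{F})$ with an iterated Fubini product whose $n$-th factor is the ultrafilter $\pi_{T_0^n,T_1^n}(\mathcal{U}_2\re\mathcal{H}^2(n))\in\mathcal{Y}^2(n)$, with rank equal to the uniform rank of $\mathcal{F}$; this gives (3). For (1) we specialize to $\mathcal{F}=\mathcal{AR}_1$: the five canonical types on $\mathcal{H}^2(0)$ match exactly principal, projected Ramsey, $\mathcal{V}_0$, $\mathcal{V}_1$, and $\mathcal{U}_2$, producing the diamond structure of $\mathcal{P}(2)$.

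Finally, for (2) I would show that all members of $\mathcal{Y}^2(n)$, and all their countable Fubini iterates, remain Tukey-equivalent to one of the five listed ultrafilters. The key point is that Fubini iterates of $\mathcal{U}_2$ with itself are Tukey equivalent to $\mathcal{U}_2$ (as in the Ramsey case); similarly, Fubini iterates whose factors all project through a single one of $\mathcal{V}_0$, $\mathcal{V}_1$, or the Ramsey projection stay in that Tukey type. Using the RK diamond from (1) together with the monotonicity of Tukey reducibility under RK reduction, any mixed Fubini iterate must be Tukey equivalent to $\mathcal{U}_2$ itself, since such a product RK-projects onto both $\mathcal{V}_0$ and $\mathcal{V}_1$. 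This collapses the countable family of RK types from (3) into the diamond of Tukey types, completing (2).
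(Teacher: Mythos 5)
Your outline follows the paper's route essentially step for step: basic Tukey reductions via Theorem \ref{thm.tim}, the front $\mathcal{F}$ and Theorem \ref{thm.50}, blockwise canonization by the projections $\pi_{T_0,T_1}$ of Equations (\ref{eq.31})--(\ref{eq.35}), and the identification of $g(\mathcal{U}_2\re\mathcal{F})$ with a Fubini iterate of members of $\bigcup_{n<\om}\mathcal{Y}^2(n)$. Parts (1) and (3) are handled exactly as in the discussion preceding the theorem, so there is nothing to quarrel with there.

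The one place your argument does not close is the collapse of the Tukey types in part (2). For a ``mixed'' iterate $\mathcal{W}$ with factors from both directions, the observation that $\mathcal{W}$ projects onto copies of $\mathcal{V}_0$ and of $\mathcal{V}_1$ yields only the lower bound $\mathcal{V}_0\times\mathcal{V}_1\le_T\mathcal{W}\le_T\mathcal{U}_2$, where $\mathcal{V}_0\times\mathcal{V}_1$ carries the coordinatewise ordering; it does not by itself exclude Tukey types strictly between $\mathcal{V}_0\times\mathcal{V}_1$ and $\mathcal{U}_2$. You must additionally prove that $\mathcal{U}_2\le_T\mathcal{V}_0\times\mathcal{V}_1$, and this is exactly where the rectangular block structure of $\mathcal{H}^2$ enters. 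Since the maximal nodes of each block of a member $W\in\mathcal{H}^2$ form a full product $I_n\times J_n$, the map sending $(A,B)\in\mathcal{V}_0\times\mathcal{V}_1$ to the set of nodes $\lgl n,\lgl i,j\rgl\rgl$ with $\lgl n,i\rgl\in A$ and $\lgl n,j\rgl\in B$ is monotone, lands in $\mathcal{U}_2$ (any $Z\in\mathcal{U}_2$ below witnesses for $A$ and $B$ has its maximal nodes contained in the image), and sends $(\pi_0(W),\pi_1(W))$ back to the maximal nodes of $W$, hence has cofinal range. With this fact, together with the observations that the rectangle p-points in $\mathcal{Y}^2(n)$ are Tukey equivalent to $\mathcal{U}_2$ and that each of the four nonprincipal types is closed under countable Fubini iteration (the p-point closure argument, as in the $\mathcal{R}_1$ case), your collapse goes through and the initial Tukey structure is the diamond $\mathcal{P}(2)$.
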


Continuing this  to higher dimensions,
the
 {\em hypercube} topological Ramsey spaces $\mathcal{H}^k$
have as elements infinite sequences $X=\lgl X(n):n<\om\rgl$ such that the $n$-th block  $X(n)$
consists of $\{\lgl\rgl,\lgl k_n\rgl\}$, where $(k_n)$ is a strictly increasing sequence, along with a $k$-dimensional cube
 which is the product $I_{k,i}$, where $I_{k,i}\in [k_n+1]^k$ for each $i<k$.
The partial ordering on the spaces $\mathcal{H}^k$ are  analogous to $\mathcal{H}^2$.
These spaces are shown by Trujillo and the author to be topological Ramsey spaces and appear in  \cite{Dobrinen/Mijares/Trujillo14}.
There,  a larger countable collection of p-points $\mathcal{Y}^k$ is found, and the analogous results to Theorem \ref{thm.H^2} for $k$-dimensions are proved.
Thus, the Boolean algebra $\mathcal{P}(k)$ is shown to be both an initial RK structure as well as an initial Tukey structure in the p-points.
The analogue of (3) in the thorem also holds, where the iterated Fubini products  range over ultrafilters in $\mathcal{Y}^k$.


\section{More initial Rudin-Keisler and Tukey structures obtained from topological Ramsey spaces}

The previous two sections provided details of how  the Rudin-Keisler and Tukey structures below certain p-points can be completely understood if the p-points were forced by some topological Ramsey space in which canonical equivalence relations on fronts are well-understood.
This section  gives the reader the flavor of a collection of  broader results.

\subsection{$k$-arrow, not $(k+1)$-arrow ultrafilters}\label{subsection.arrow}

The $k$-arrow ultrafilters are a class of p-points which satisfy asymmetric partition relations.

\begin{defn}[\cite{Baumgartner/Taylor78}]\label{defn.arrow}
An ultrafilter $\mathcal{U}$ is {\em $n$-arrow} if $2\le n<\om$ and for every function $f:[\om]^2\ra 2$,
either there exists a set $X\in\mathcal{U}$ such that $f([X]^2)=\{0\}$,
or else there is a set $Y\in [\om]^n$ such that $f([Y]^2)=\{1\}$.
$\mathcal{U}$ is an {\em arrow} ultrafilter if $\mathcal{U}$ is $n$-arrow for each $n\le 3<\om$.
\end{defn}

Baumgartner and Taylor showed in  \cite{Baumgartner/Taylor78} that for each $2\le n<\om$, there are p-points which are $n$-arrow but not $(n+1)$-arrow.
Note that every ultrafilter is $2$-arrow.
Similarly to the $\mathcal{R}_1$ and $\mathcal{H}^2$ spaces, for each $k\ge 2$, there is a topological Ramsey space  $\mathcal{A}_k$ which  is dense in the forcing that Baumgartner and Taylor used to construct an $k$-arrow, not $(k+1)$-arrow ultrafilter.
The members of this space are infinite sequences, $X=\lgl X(n):n<\om\rgl$, such that each $X(n)$ is a certain type of ordered graph omitting $k+1$-cliques.
The fact that these ultrafilters are forced by a topological Ramsey space shows that they have complete combinatorics, by Theorem \ref{thm.completecomb}.
For details, the reader is referred to \cite{Dobrinen/Mijares/Trujillo14}.

Similarly to $\mathcal{R}_1$,
both the initial RK structure and initial Tukey structure for the $k$-arrow, not $k+1$-arrow ultrafilter  $\mathcal{W}_k$ forced by $(\mathcal{A}_k,\le^*)$
are of size $2$:  $\mathcal{W}_k$ and its projection to a Ramsey ultrafilter.
However, when we look at the Rudin-Keisler classes inside of the Tukey type of $\mathcal{W}_k$,
the picture becomes more  complex as we shall now see.

The canonical equivalence relations on the collection of $n$-th blocks (that is, $\{X(n):X\in\mathcal{A}_n\}$)
were obtained by the author and we found to be again given by projections.
This depended heavily on the flexibility of the structure of the \Fraisse\ limit  of the class of finite ordered graphs omitting $k+1$-cliques.
The following is a specific case of a more general theorem for canonical equivalence relations, attributed to Dobrinen in \cite{Dobrinen/Mijares/Trujillo14}.
For graphs $A,B$, the notation ${B\choose A}$ denotes the set of all subgraphs of $B$ which are isomorphic to $A$.
For an ordered graph $A$ with vertices $\{v_0,\dots,v_j\}$ and $I\sse j+1$,
$\pi_I(A)$ denotes the subgraph of $A$ induced by the vertices $\{v_i:i\in I\}$.

\begin{thm}[Dobrinen, \cite{Dobrinen/Mijares/Trujillo14}]
\label{thm.ERgraphs}
Let $k\ge 3$ be given and let $A$ and $B$ be finite ordered graphs  omitting $k$-cliques and such that $A$ embeds into $B$ as a subgraph.
Then there is a finite ordered graph $C$ omitting $k$-cliques which is large enough that the following  holds.
Given any equivalence relation $\E$ on ${ C \choose A}$,
there is an $I\sse |A|$ and a $B'\in {C\choose B}$ such that
$\E$ restricted to ${B' \choose A}$ is given by $\E_I$.
\end{thm}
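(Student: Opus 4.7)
My plan is to establish this as a canonical equivalence relation theorem of the \Pudlak--\Rodl\ type, with underlying Ramsey input being the \Nesetril--\Rodl\ theorem for the class $\mathcal{K}$ of finite ordered graphs omitting $k$-cliques: for every $A', B' \in \mathcal{K}$ and $r \ge 1$, there is $C' \in \mathcal{K}$ such that for any $r$-coloring of $\binom{C'}{A'}$, some $\tilde B \in \binom{C'}{B'}$ has $\binom{\tilde B}{A'}$ monochromatic. The first step is to enumerate the finitely many \emph{two-copy types} $\tau_1, \dots, \tau_m$: each $\tau_j$ is a triple $(G_j, \alpha_j^1, \alpha_j^2)$ with $G_j \in \mathcal{K}$ and $\alpha_j^1, \alpha_j^2 \colon A \hookrightarrow G_j$ embeddings whose images cover $G_j$. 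Each $\tau_j$ records an overlap set $J_{\tau_j} \sse |A|$ consisting of the positions at which the two copies coincide, and any equivalence relation $\E$ on $\binom{C}{A}$ induces, for each $\tau_j$, a $\{0,1\}$-coloring of the copies of $G_j$ in $C$ marking whether the distinguished pair of $A$-copies is $\E$-equivalent.

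Next, I would build $C$ by iteratively applying \Nesetril--\Rodl, starting from an enlargement $B^\flat \in \mathcal{K}$ of $B$ chosen to contain all two-copy types as substructures, and inflating $m$ times, once per type. Given $\E$, the nested Ramsey reductions then produce $B' \in \binom{C}{B}$ such that for every $\tau_j$, either every pair of $A$-copies in $B'$ realizing $\tau_j$ is $\E$-equivalent or none is. Thus $\E \re \binom{B'}{A}$ depends only on the two-copy isomorphism type of $A_1 \cup A_2$. Define $I \sse |A|$ to be the intersection $\bigcap \{J_\tau : \tau \text{ has color } 0\}$; reflexivity of $\E$ ensures the full-overlap type is colored $0$, so $I$ is well-defined, and by construction $A_1 \,\E\, A_2$ implies $\pi_I(A_1) = \pi_I(A_2)$.

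The converse, which gives the characterization $\E \re \binom{B'}{A} = \E_I$, is where the main obstacle lies: one must show, using symmetry and transitivity, that any two $A$-copies $A_1, A_2 \in \binom{B'}{A}$ with $\pi_I(A_1) = \pi_I(A_2)$ are $\E$-equivalent. The idea is a chaining argument: produce a third $A$-copy $A_3 \in \binom{B'}{A}$ realizing a two-copy type of color $0$ with $A_1$ and another of color $0$ with $A_2$, then conclude by transitivity. For this, $B'$ must contain rich enough families of $A$-copies to witness the needed amalgamations inside $\mathcal{K}$, which is why $B$ was pre-enlarged to $B^\flat$ before the Ramsey iteration. The delicate point will be to verify that the required amalgams exist inside $\mathcal{K}$ --- that is, that one can amalgamate finite ordered graphs in the desired way while preserving the constraint of omitting $k$-cliques --- and that all of them can be simultaneously realized inside a single $B^\flat$. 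This is precisely where the interaction between the structural Ramsey theorem and the clique-omitting constraint must be handled carefully.
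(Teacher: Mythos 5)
A preliminary remark: this survey only \emph{states} Theorem \ref{thm.ERgraphs} and attributes it to \cite{Dobrinen/Mijares/Trujillo14}; no proof is reproduced here, so there is no in-paper argument to compare yours against step by step. That said, your outline follows the route one would expect from the cited source: the \Nesetril--\Rodl\ Ramsey theorem for ordered graphs omitting cliques, applied to two-copy types, followed by a \Pudlak--\Rodl-style analysis. Up through the homogenization the proposal is sound --- ordered graphs are rigid, so each copy of $G_j$ in $C$ carries a well-defined distinguished pair of $A$-copies, the induced $2$-colorings are legitimate, and iterating the Ramsey theorem over the finitely many types yields a copy of your enlargement $B^\flat$ inside which $\E$-equivalence of two $A$-copies depends only on their two-copy type. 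The forward inclusion ($A_1\,\E\,A_2$ implies $\pi_I(A_1)=\pi_I(A_2)$ for $I=\bigcap\{J_\tau : \tau \text{ colored } 0\}$) is also correct as written.

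The genuine gap is that the converse inclusion, which is the actual content of the theorem, is only named, not proved, and it does not reduce to a single chaining step. To get $\E\re {B'\choose A}=\E_I$ you must establish at least the following, none of which appears in the proposal: (i) any two types with the same same-position overlap set receive the same color --- a priori one type with overlap $J$ could be colored $0$ and another colored $1$, which would already rule out $\E=\E_I$ for every $I$; (ii) the family of overlap sets of color-$0$ types is upward closed and closed under intersection, so that \emph{every} type whose overlap contains $I$ is colored $0$; and (iii) the three-copy configurations witnessing (i) and (ii) --- a copy $A_3$ forming prescribed color-$0$ types simultaneously with $A_1$ and with $A_2$ --- actually exist as ordered graphs omitting $k$-cliques and are all realized inside the single finite $B^\flat$ that was homogenized. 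Point (iii) is where the clique constraint genuinely enters: free amalgamation (no edges between the new parts) cannot create a clique, but the cross-edges and the position of $A_3$ in the linear order are dictated by the two prescribed types and must be shown to be simultaneously satisfiable; this is precisely the amalgamation property isolated for this purpose in \cite{Dobrinen/Mijares/Trujillo14}. Finally, watch the quantifiers: the homogenization must be performed over a copy of $B^\flat$, and the advertised $B'\in{C\choose B}$ then chosen inside it, since the intermediate copies $A_3$ generally live in $B^\flat$ outside $B'$; your text gestures at this but conflates producing $B'$ with producing the homogenized copy of $B^\flat$.
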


The building blocks of the Rudin-Keisler classes inside the Tukey  are ultrafilters obtained by the canonical projection maps resulting in the following.
Let $\mathcal{K}_{k+1}$ denote the \Fraisse\ class of all finite ordered graphs omitting $k+1$-cliques.
More precisely, we take one finite ordered graph omitting $k+1$-cliques from each isomorphism class of these graphs.
This set is partially ordered by graph embedding.

\begin{thm}[Dobrinen/Mijares/Trujillo, \cite{Dobrinen/Mijares/Trujillo14}]
\label{thm.arrow}
Let $\mathcal{W}_k$ be a $k$-arrow, not $k+1$-arrow p-point forced by the topological Ramsey space $\mathcal{A}_k$ partially ordered by the $\sigma$-closed order $\le ^*$.
\begin{enumerate}
\item
The initial Rudin-Keisler structure below $\mathcal{W}_k$ is a chain of length $2$.
\item
The initial Tukey structure below $\mathcal{W}_k$ is  a chain of length $2$.
\item
The isomorphism classes  inside the Tukey type of  $\mathcal{W}_k$ have the same structure as $\mathcal{K}_{k+1}$ partially ordered by embedding.
\end{enumerate}
\end{thm}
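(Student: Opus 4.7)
The plan is to follow the five-step strategy laid out in Subsection 3.9, specialized to the topological Ramsey space $(\mathcal{A}_k,\le^*)$. First I would invoke Theorem \ref{thm.tim} to conclude that $\mathcal{W}_k$ has basic Tukey reductions: every monotone cofinal map $f:\mathcal{W}_k\to\mathcal{V}$ into a nonprincipal ultrafilter $\mathcal{V}$ on $\om$ is, after restriction below some $X\in\mathcal{W}_k$, continuous in the metric topology on $\mathcal{A}_k$ and induced by a finitary map $\hat{f}:\mathcal{AR}\to[\om]^{<\om}$. Then, as in the proof outline for Ramsey ultrafilters, I would define the front $\mathcal{F}=\{r_{n(X)}(X):X\in\mathcal{A}_k\}$ where $n(X)$ is least with $\hat{f}(r_{n(X)}(X))\ne\emptyset$, form the ultrafilter $\mathcal{W}_k\re\mathcal{F}$ on $\mathcal{F}$, let $g(a)=\min(\hat{f}(a))$, and apply Theorem \ref{thm.50} to obtain $\mathcal{V}\equiv_{RK}g(\mathcal{W}_k\re\mathcal{F})$.

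Next I would canonize the equivalence relation on $\mathcal{F}$ induced by $g$. This is where the space-specific theorem enters: by a recursion on the rank of $\mathcal{F}$, combined with the Abstract Nash-Williams Theorem and block-by-block applications of Theorem \ref{thm.ERgraphs}, one finds $Z\in\mathcal{W}_k$ such that $g$ restricted to $\mathcal{F}|Z$ is canonical, given at each level $n$ by an irreducible map which on every block is a projection $\pi_{T_n}$ to a subgraph $T_n$ of the $n$-th block-graph. It follows that $\mathcal{V}$ is Rudin-Keisler equivalent to a countable Fubini iterate of ultrafilters of the form $\pi_{T_n}(\mathcal{W}_k\re\mathcal{A}_k(n))$, each of which, up to isomorphism, is determined by the isomorphism type of $T_n$ as an ordered graph omitting $(k+1)$-cliques---hence by a member of $\mathcal{K}_{k+1}$.

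For parts (1) and (2), I would restrict attention to fronts $\mathcal{F}\sse\mathcal{AR}_1$. By construction of $\mathcal{A}_k$, the first-block-graph admits only three essentially different canonical subgraph-projections: the projection to the empty subgraph, which gives a principal ultrafilter; the projection to the single-vertex structure, which yields the Ramsey ultrafilter that $\mathcal{W}_k$ projects onto; and the full projection, which returns $\mathcal{W}_k$ itself. A standard further argument, using that higher-rank fronts always produce ultrafilters Tukey-equivalent to $\mathcal{W}_k$, shows that both the initial RK and initial Tukey structures below $\mathcal{W}_k$ are chains of length 2. For part (3), I would verify that the RK order on the projected ultrafilters $\pi_{T_n}(\mathcal{W}_k\re\mathcal{A}_k(n))$, and more generally on their Fubini iterates, is governed precisely by the graph-embedding order on $\mathcal{K}_{k+1}$: an embedding $A\hookrightarrow B$ yields a natural RK map from the $B$-projection onto the $A$-projection, while the converse direction uses a pigeonhole extracted from Theorem \ref{thm.ERgraphs} together with the extension properties of the \Fraisse\ class of ordered graphs omitting $(k+1)$-cliques.

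The hard part will be this last identification of $(\mathcal{K}_{k+1},\hookrightarrow)$ as the exact RK structure inside $[\mathcal{W}_k]_T$, specifically the direction showing that if $A$ does not embed into $B$ then the corresponding projected ultrafilters are RK-incomparable. Any candidate RK map would, by continuity, induce a finitary map between fronts; one then has to use Theorem \ref{thm.ERgraphs} to canonize this finitary map, and argue from the $(k+1)$-clique-free Ramsey property that its canonical form forces an embedding $A\hookrightarrow B$, contradicting the hypothesis. Once this combinatorial core is in place, the rest of the argument is a routine adaptation of the corresponding analyses for $\mathcal{R}_1$ and $\mathcal{H}^2$ given in the previous sections.
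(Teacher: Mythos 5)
Your proposal follows essentially the same route as the paper's own treatment: continuous (basic) Tukey reductions via Theorem \ref{thm.tim}, reduction to an RK image of $\mathcal{W}_k\re\mathcal{F}$ for a front $\mathcal{F}$ via Theorem \ref{thm.50}, block-by-block canonization using Theorem \ref{thm.ERgraphs}, and identification of the resulting projected ultrafilters and their Fubini iterates with $\mathcal{K}_{k+1}$ under embedding. The paper only sketches this argument (deferring details to \cite{Dobrinen/Mijares/Trujillo14}), and your outline, including the isolation of the RK-incomparability direction as the combinatorial core, matches it.
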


This is a particular example of more general results in \cite{Dobrinen/Mijares/Trujillo14}
handling other \Fraisse\ classes of ordered relational structures with the Ramsey property, and finite  products of such structures, producing quite complex Rudin-Keisler structures inside the Tukey types.
For instance, there are topological Ramsey spaces which produce initial Tukey structure  of the form $([\om]^{<\om},\sse)$.
Furthermore, {\em arrow ultrafilters} (ultrafilters which are $k$-arrow for all $k<\om$)  are also seen to be forced by  a topological Ramsey space, and have similar results for their initial RK and Tukey structures.
These and more  general results are  found as
Theorems 60 and  67 in \cite{Dobrinen/Mijares/Trujillo14}.


\subsection{Ultrafilters of Laflamme with increasingly weak partition relations}

The ultrafilter $\mathcal{U}_1$ in Section \ref{section.R_1}
was only the beginning of a hierarchy of p-points satisfying successively weaker partition relations constructed by
 Laflamme in \cite{Laflamme89}.
These forcings $\bP_{\al}$, $1\le \al<\om_1$,
were found to have dense subsets forming topological Ramsey spaces
in \cite{Dobrinen/Todorcevic15}.
The reader interested in more details is referred to that paper.
Here, we merely state that this yields rapid  p-points $\mathcal{V}_{\al}$ for each $1\le \al<\om_1$ which have complete combinatorics (proved by Laflamme for the version over HOD$^{V[G]}$, and obtained over $L(\mathbb{R})$ in the presence of large cardinals by Dobrinen and Todorcevic by virtue of being forced by a topological Ramsey space).

\begin{thm}[Dobrinen/Todorcevic, \cite{Dobrinen/Todorcevic15}]\label{thm.D15}
For each $1\le \al<\om_1$, there is a topological Ramsey space $\mathcal{R}_{\al}$ forcing
a p-point $\mathcal{V}_{\al}$ such that the initial Rudin-Keisler structure and the initial Tukey structure  are both decreasing chains of order-type $(\al+1)^*$.

For each $1\le \al<\om_1$,  the Rudin-Keisler types inside the Tukey type of $\mathcal{V}_{\al}$
are the countable Fubini iterates of the p-points obtained by canonical projections on the blocks of the sequences forming members of $\mathcal{R}_{\al}$.
\end{thm}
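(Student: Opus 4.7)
The plan is to proceed by transfinite recursion on $\al$, constructing each space $\mathcal{R}_{\al}$ as a tree-like structure whose shape encodes the ordinal $\al$, and then to extract the promised structural consequences by the five-step scheme summarized in Subsection \ref{subsection.initialTRKstructure}. For $\al = 1$ the space $\mathcal{R}_1$ of Section \ref{section.R_1} serves as the base case. At a successor $\al = \be + 1$, I would enlarge the reference tree by adjoining a new bottom level: fix $\bT_{\al}$ whose $n$-th subtree $\bT_{\al}(n)$ is built so that the truncation to its top $\al$ levels is tree-isomorphic to a sufficiently large piece of $\bT_{\be}(n)$, and declare $\mathcal{R}_{\al}$ to consist of infinite subtrees tree-isomorphic to $\bT_{\al}$ with blocks $X(n)$ drawn from distinct $\bT_{\al}(k_n)$ along a strictly increasing sequence $k_n$. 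At a limit $\al$, fix an increasing sequence $(\al_n)_{n<\om}$ cofinal in $\al$ and build $\bT_{\al}$ so that $\bT_{\al}(n)$ is a copy of some $\bT_{\al_n}(n)$, with $\mathcal{R}_{\al}$ defined analogously. The partial order is subtree, $r_n$ returns the first $n$ blocks, and $\le^*$ is the natural eventual-subtree ordering. As in Section \ref{section.R_1}, a block-to-tree coding map exhibits $\mathcal{R}_{\al}$ as dense in Laflamme's $\bP_{\al}$, so the generic filter $\mathcal{U}_{\al}$ restricted to $\mathcal{AR}_1$ recovers $\mathcal{V}_{\al}$.

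Verifying \textbf{A.1}, \textbf{A.2}, \textbf{A.3} is bookkeeping analogous to $\mathcal{R}_1$; the essential point is the pigeonhole axiom \textbf{A.4}, which I would establish by transfinite induction on $\al$ simultaneously with the construction. At a successor stage, colouring the block space $\mathcal{R}_{\al}(k)$ reduces, after freezing the topmost level of a block, to colouring an $\mathcal{R}_{\be}$-style block combined with a finite Ramsey argument for the new bottom row, exactly mirroring the pigeonhole proof sketched for $\mathcal{R}_1$. At a limit stage, each block $X(n)$ lives inside a copy of $\bT_{\al_n}(n)$, so the single-block pigeonhole at stage $n$ is an instance of the inductive hypothesis for $\mathcal{R}_{\al_n}$; concatenating the block-wise monochromatic thinnings and extracting a colour-homogeneous infinite subsequence produces the required $Z \in [\depth_X(a), X]$. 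Combining this with the Abstract Ellentuck Theorem shows $(\mathcal{R}_{\al}, \le_{\mathcal{R}_{\al}}, r)$ is a topological Ramsey space, and Theorem \ref{thm.completecomb} delivers complete combinatorics over $L(\bR)$ in the presence of large cardinals.

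For the structural conclusions I would follow the scheme of Subsection \ref{subsection.initialTRKstructure}. Given any nonprincipal $\mathcal{W} \le_T \mathcal{V}_{\al}$ with monotone cofinal $f$, Theorem \ref{thm.tim} yields a continuous cofinal $f^*$ below some $X \in \mathcal{U}_{\al}$ together with a finitary approximation $\check{f} : \mathcal{AR} \to [\om]^{<\om}$; from $\check{f}$ one extracts a front $\mathcal{F}$ and a function $g : \mathcal{F} \to \om$ such that $\mathcal{W} = g(\mathcal{U}_{\al} \re \mathcal{F})$ by Theorem \ref{thm.50}. The key additional ingredient is a \Pudlak-\Rodl-style canonisation theorem for $\mathcal{R}_{\al}$: every equivalence relation on a front is, below some member of $\mathcal{U}_{\al}$, given by projection to a sequence of subtree-shaped substructures of the indexing blocks. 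I would prove this by induction on the uniform rank of the front, using the Abstract Nash-Williams Theorem to reduce to single-block canonisation inside $\bT_{\al}(n)$, where the admissible projections form a finite lattice determined by the levels of $\bT_{\al}(n)$. Granted this canonisation, the only nonprincipal single-block projection classes are truncations to one of the cofinal-depth levels, so the projected p-points form a decreasing chain of order type $(\al+1)^*$; this serves simultaneously as the initial RK structure and, via the continuous cofinal map analysis, as the initial Tukey structure. The identification of the RK classes inside each Tukey type with countable Fubini iterates of these projected p-points then follows exactly as in the proof of Theorem \ref{thm.DTTams1}, since each canonical projection on a front contained in $\mathcal{AR}_k$ is precisely a $k$-fold sequence of block projections.

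The main obstacle is the canonisation theorem at limit $\al$. Because the shapes of the blocks $\bT_{\al}(n)$ themselves vary with $n$, the list of admissible projections changes from block to block, and the inductive step must prove canonisation and tight control of the surviving projections simultaneously, keeping careful track of which block-projections remain available after passing to a further $Y \le^* X$ in $\mathcal{U}_{\al}$; this is where the delicacy of the limit construction of $\bT_{\al}$ is felt most acutely, and where the parallel with Laflamme's original recursion in \cite{Laflamme89} must be exploited to ensure the chain length comes out to exactly $\al+1$.
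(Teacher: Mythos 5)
This survey does not actually prove Theorem \ref{thm.D15}: the result is quoted from \cite{Dobrinen/Todorcevic15}, and the surrounding text explicitly refers the reader to that paper for all details. So your proposal can only be measured against the methodology the paper does lay out --- the five-step scheme of Subsection \ref{subsection.initialTRKstructure}, Theorems \ref{thm.tim} and \ref{thm.50}, and the worked example $\mathcal{R}_1$ of Section \ref{section.R_1} --- and on that score your skeleton is the right one: build a tree-based space dense in Laflamme's $\bP_{\al}$, verify \textbf{A.1}--\textbf{A.4} with the pigeonhole axiom as the real content, prove a \Pudlak-\Rodl-type canonization theorem on fronts, and read off the initial RK and Tukey structures from the canonical projections.

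There are, however, two genuine gaps. First, to conclude that the initial Tukey (and RK) structure has order type exactly $(\al+1)^*$ you must show not only that every nonprincipal $\mathcal{W}\le_T\mathcal{V}_{\al}$ is Tukey equivalent to one of the $\al+1$ projected p-points, but also that those projected p-points are pairwise Tukey inequivalent (and pairwise RK inequivalent); your argument supplies only the first half. The second half is where the partition relations separating the members of Laflamme's hierarchy (each $\mathcal{V}_{\be}$ satisfies a partition property that $\mathcal{V}_{\be+1}$ fails) must enter, and nothing in your write-up does this work. Relatedly, the step from ``every reducible ultrafilter is a countable Fubini iterate of projected p-points'' to ``the Tukey types form a chain'' silently uses that such an iterate is Tukey equivalent to the strongest projected p-point occurring cofinally in it; this needs to be stated and justified. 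Second, you yourself flag the limit-stage canonization --- where the admissible block projections vary with $n$ --- as ``the main obstacle'' and then do not overcome it. As written, the recursion proves the successor step modulo an induction hypothesis whose limit case is left open, and pinning down the limit-stage block structure concretely is precisely the hard part of \cite{Dobrinen/Todorcevic15}; until that is done the construction does not close.
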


Recent work of Zheng in \cite{Zheng17} showed that these ultrafilters are preserved by countable support side-by-side Sacks forcing.
Zheng had already shown this to be the case for the ultrafilter on base set FIN = $[\om]^{<\om}\setminus\{\emptyset\}$ which is constructed by the Milliken space of infinite increasing block sequences (see \cite{Zheng16}).

\subsection{Ultrafilters forced by $\mathcal{P}(\om\times\om)/(\Fin\otimes \Fin)$}\label{subsec.finfin}

The forcing $\mathcal{P}(\om)/\Fin$ which adds a Ramsey ultrafilter has a natural generalization to $\mathcal{P}(\om\times\om)/\Fin\otimes\Fin$,
where $\Fin\otimes\Fin$ is the ideal of the sets $X\sse\om\times\om$ such that for all but finitely many $i<\om$, the set $\{j<\om:(i,j)\in X\}$ is finite.
We let $(X)_i$ denote $\{j<\om:(i,j)\in X\}$ and call it the {\em $i$-th fiber} of $X$.
This forcing adds  a new ultrafilter $\mathcal{W}_2$ which is not a p-point but
satisfies the best partition property that a non-p-point can have, namely,
$\mathcal{W}_2\ra(\mathcal{W}_2)^2_{l,4}$.
Letting $\pi_0:\om\times\om\ra\om$ by $\pi_0(i,j)=i$,
the projection $\pi_0(\mathcal{W}_2)$ to its first coordinates is a Ramsey ultrafilter.

Many properties of the ultrafilter $\mathcal{W}_2$  were investigated by Blass, Dobrinen and Raghavan in \cite{Blass/Dobrinen/Raghavan15}.
That paper included bounds on  Tukey type of $\mathcal{W}_2$ showing that it is neither minimal nor maximal in the Tukey types of ultrafilters, but the question of the exact structure of the Tukey types below it remained open.

In \cite{DobrinenJSL15}, the author proved that
$\mathcal{P}(\om\times\om)/\Fin\otimes\Fin$ is forcing equivalent to a topological Ramsey space when partially ordered by its almost reduction relation.
The coideal $(\Fin\otimes\Fin)^+$
is the collection of all $X\sse\om\times\om$ such that for all
but finitely many $i<\om$,
the $i$-th fiber of $X$ is infinite.
For $X,Y\sse\om\times\om$,
write $Y\sse^{*2} X$ if and only if $Y\setminus X\in\Fin\otimes\Fin$.
It is routine to check that $\mathcal{P}(\om\times\om)/\Fin\otimes\Fin$ is forcing equivalent to $((\Fin\otimes\Fin)^+, \sse^{*2})$.
The
forcing $((\Fin\otimes\Fin)^+, \sse^{*2})$ contains a dense subset which forms a topological Ramsey space.
We denote this space $\mathcal{E}_2$, since it is the 2-dimensional Ellentuck space.
Here, we will only present an overview of this work, referring the interested reader to \cite{DobrinenJSL15}.

In order to find the initial Rudin-Keisler and Tukey structures below $\mathcal{W}_2$,
a new kind of canonical Ramsey theorem for equivalence relations on fronts had
to be proved.   The canonical equivalence relations are again given by canonical projection functions, projecting to subtrees.  However, they  have a quite different structure than the previous examples in that they are not sequences of finitary projections, since the structure of the members of $\mathcal{E}_2$ are isomorphic to the ordinal $\om^2$.

Similarly to how $\Fin^{\otimes 2}:=\Fin\otimes\Fin$ was defined given $\Fin$,
the process can be recursively continued to define  ideals
 $\Fin^{\otimes k+1}$ for each $k<\om$,
where $X\sse \om^{k+1}$ is a member of
 $\Fin^{\otimes k+1}$ if and only if for all but finitely many $i_0<\om$,
$\{(i_1,\dots,i_k)\in\om^k:(i_0,i_1,\dots,i_k)\in X\}$ is a member of $\Fin^{\otimes k}$.
Then $\mathcal{P}(\om^k)/\Fin^{\otimes k}$ forces an ultrafilter $\mathcal{W}_k$ which is not a p-point and projects to $\mathcal{W}_j$ for each $1\le j<k$, where $\mathcal{W}_1$ is Ramsey.
The initial Rudin-Keisler and Tukey structures of these ultafilters are as follows.

\begin{thm}[Dobrinen, \cite{DobrinenJSL15}]\label{thm.jsl}
For each $2\le k<\om$, there is a $k$-dimensional Ellentuck space,  $\mathcal{E}_k$, such that $(\mathcal{E}_k,\sse^{*k})$ is forcing equivalent to  $\mathcal{P}(\om^k)/\Fin^{\otimes k}$.
The forced
ultrafilter $\mathcal{U}_k$ satisfies complete combinatorics,
and its initial Rudin-Keisler and its initial Tukey structures are both chains of length $k$.
\end{thm}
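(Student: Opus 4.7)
The plan is to construct $\mathcal{E}_k$ recursively for $k\ge 2$ and then carry out the five-step analysis outlined in Section~\ref{sec.forcingTRS}. First, I would define $\mathcal{E}_1$ to be the Ellentuck space, and for $k\ge 2$ declare a subset $X\sse\om^k$ to be a member of $\mathcal{E}_k$ iff (a) the set of first coordinates appearing in $X$ is infinite, and (b) for each such $i$, the fiber $(X)_i:=\{(i_1,\dots,i_{k-1}):(i,i_1,\dots,i_{k-1})\in X\}$ is a member of $\mathcal{E}_{k-1}$. The natural well-order of $\om^k$ of order type $\om^k$ (induced recursively by $\om^{k-1}$-blocks) gives the finite approximation map $r_n$, and I would let $\le_{\fin}$ be $\sse$ restricted to the finite approximations $\mathcal{AR}_n=\{r_n(X):X\in\mathcal{E}_k\}$. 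The almost-reduction relation coincides with $\sse^{*k}$, where $Y\sse^{*k}X$ iff $Y\setminus X\in\Fin^{\otimes k}$.

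Next I would verify the axioms \textbf{A.1}\,--\,\textbf{A.4} by induction on $k$. Axioms \textbf{A.1} and \textbf{A.2} are immediate from the construction; \textbf{A.3} follows from the recursive nested-ultrafilter structure. The key step is the Pigeonhole \textbf{A.4}, which I would prove by combining the finite Ramsey theorem with the inductive hypothesis for $\mathcal{E}_{k-1}$: given a coloring of $\mathcal{AR}_{|a|+1}$, one thins down the next column (in the fibering corresponding to a new first coordinate) using \textbf{A.4} for $\mathcal{E}_{k-1}$, and then uses finite Ramsey-type arguments to thin the sequence of columns to keep the color constant. The Abstract Ellentuck Theorem then shows $(\mathcal{E}_k,\le,r)$ is a topological Ramsey space. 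Forcing equivalence with $\mathcal{P}(\om^k)/\Fin^{\otimes k}$ is obtained by showing that every $X\in(\Fin^{\otimes k})^+$ contains a member of $\mathcal{E}_k$; one constructs this by a straightforward recursion using that positive fibers are themselves in $(\Fin^{\otimes k-1})^+$ and hence contain members of $\mathcal{E}_{k-1}$. Complete combinatorics is then an immediate corollary of Theorem~\ref{thm.completecomb}, once it is verified that the generic filter $\mathcal{U}_k$ is selective in the abstract sense of Definition~\ref{defn.selective}, which follows from standard genericity arguments applied to the dense open families in $\mathcal{E}_k$.

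Finally, to determine the initial Rudin--Keisler and Tukey structures, I would apply the five-step template from Subsection~\ref{subsection.initialTRKstructure}. By Theorem~\ref{thm.tim}, any monotone cofinal map $f:\mathcal{U}_k\ra\mathcal{V}$ is continuous below some $X\in\mathcal{U}_k$, and by Theorem~\ref{thm.50} there is a front $\mathcal{F}$ on $\mathcal{E}_k$ and a map $g:\mathcal{F}\ra\om$ with $\mathcal{V}\cong g(\mathcal{U}_k\re\mathcal{F})$. The crucial new ingredient is a canonical Ramsey theorem for equivalence relations on fronts of $\mathcal{E}_k$: I would show that modulo passing to $\mathcal{F}|Y$ for some $Y\in\mathcal{U}_k$, every equivalence relation is given by projection to a ``sub-$\om^j$'' of the $k$-dimensional structure for some $j\le k$ — i.e.\ by erasing some trailing coordinates. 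Because the natural projections $\om^k\twoheadrightarrow\om^j$ produce precisely $\mathcal{U}_j$ (with $\mathcal{U}_1$ Ramsey), this forces $\mathcal{V}\equiv_{RK}\mathcal{U}_j$ for some $j\le k$, giving a chain of length $k$ for both the initial Rudin--Keisler and the initial Tukey structures.

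The main obstacle is the canonical Ramsey theorem in the third paragraph. Fronts in $\mathcal{E}_k$ have rank as high as $\om^k$, and their elements carry a recursive ``tree of trees'' structure rather than the flat finite-set structure of $[\om]^{<\om}$, so a direct appeal to the \Pudlak--\Rodl\ Theorem is not available. The canonical forms must be identified as projections to substructures indexed by prefix-closed subsets of the $k$-dimensional index, and the proof requires a careful stratified induction on the rank of the front that at each level invokes both \textbf{A.4} for $\mathcal{E}_k$ and the canonization theorem for $\mathcal{E}_{k-1}$. Getting the statement of ``canonical'' right so that it is both preserved under refinement and strong enough to pin down $\mathcal{V}$ as one of the $\mathcal{U}_j$ is the genuinely delicate point; the verification of \textbf{A.4} itself, while combinatorially involved, is comparatively routine once the correct recursive framework is in place.
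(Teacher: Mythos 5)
Your overall strategy---build $\mathcal{E}_k$ recursively, verify \textbf{A.1}--\textbf{A.4} with the Pigeonhole \textbf{A.4} as the main combinatorial step, show density in $(\Fin^{\otimes k})^+$ to get forcing equivalence, obtain complete combinatorics from Theorem \ref{thm.completecomb}, and then run the five-step template via Theorems \ref{thm.tim} and \ref{thm.50} together with a new canonization theorem---is the route taken in Subsection \ref{subsec.finfin} and in \cite{DobrinenJSL15}. However, two points are genuine gaps rather than omitted routine detail. First, your finite approximation map does not work as stated: you take $r_n$ along ``the natural well-order of $\om^k$ of order type $\om^k$,'' but for $k\ge 2$ the union of the finite initial segments of a well-order of type $\om^k$ is only its first $\om$ elements, so $\bigcup_{n<\om}r_n(X)$ never recovers $X$, and both \textbf{A.2}(b) and closure of the space in $\mathcal{AR}^{\bN}$ fail. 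The members of $\mathcal{E}_k$ are indeed isomorphic to $\om^k$ as ordered sets, but the approximations must be taken along a carefully chosen enumeration of order type $\om$ that interleaves the fibers, and membership in the space must be constrained so that this interleaving is respected; pinning down that ordering is a substantive part of the construction, not bookkeeping. Relatedly, ``infinitely many fibers each in $\mathcal{E}_{k-1}$'' essentially reproduces the coideal $(\Fin^{\otimes k})^+$, whereas $\mathcal{E}_k$ is a proper dense subset of it carrying extra structure.

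Second, the canonization step is shaped differently from ``erasing trailing coordinates.'' The paper emphasizes that the canonical equivalence relations on fronts of $\mathcal{E}_k$ are projections to subtrees which are \emph{not} sequences of finitary projections, precisely because each member has $\om^k$ structure rather than being an $\om$-sequence of finite blocks; proving this new canonical Ramsey theorem is the central new work in \cite{DobrinenJSL15}. You acknowledge this difficulty in your final paragraph, but your third paragraph then concludes ``$\mathcal{V}\equiv_{RK}\mathcal{U}_j$'' for every Tukey-reducible $\mathcal{V}$, which is too strong: Fubini iterates of the $\mathcal{U}_j$ lie Tukey-below $\mathcal{U}_k$ without being isomorphic to any $\mathcal{U}_j$. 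The chain of length $k$ in the Tukey order comes from showing every such $\mathcal{V}$ is Tukey-equivalent (not RK-equivalent) to some $\mathcal{U}_j$, while the RK chain concerns only RK-reducible ultrafilters. Finally, since $\mathcal{U}_k$ is not a p-point for $k\ge 2$, your use of Theorem \ref{thm.tim} requires verifying its hypothesis that $\{d(X):X\in\mathcal{U}_k\}$ generates a nonprincipal ultrafilter on $\om$, rather than inheriting continuity of cofinal maps from p-point arguments.
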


Although these ultrafilters $\mathcal{W}_k$ are not p-points, the high-dimensional Ellentuck spaces which force them treat them as p-points,
in the sense that every $\sse^{*k}$-decreasing sequence has a $\sse^{*k}$-pseudointersection in $\mathcal{W}_k$.
This is the sense in which these ultrafilters are similar to p-points; they satisfy diagonalization with respect to some $\sigma$-closed ideal.
It is efficacious to think of these as p-points with respect to topological Ramsey spaces  with respect to  almost reduction.


\section{Further Directions}\label{sec.}

The construction of topological Ramsey spaces to
has served to fine-tune
our understanding of several classes of  ultrafilters satisfying some partition relations.
It seems to us that these are just a few examples of a broader scheme.
Listed below are some guiding themes for further investigation in which topological Ramsey spaces  will likely  play a vital role.
Recall our Conjecture \ref{conj}:
Every ultrafilter satisfying some partition relation and forced by some $\sigma$-closed forcing
is actually forced by some topological Ramsey space.
If this turns out to be true, then topological Ramsey spaces will be exactly the correct spaces in which to investigate such ultrafilters, and moreover, all such ultrafilters will have complete combinatorics.

Finding the initial Tukey structures is a way of approximating the exact structure of {\em all} the Tukey types of ultrafilters starting  from the bottom of the hierarchy and going as high up as possible.
We have shown  that $(\al+1)^*$ for each $1\le\al<\om_1$,
$\mathcal{P}(k)$ for each $1\le k<\om$,
and $([\om]^{<\om},\sse)$ all appear as initial Tukey structures of p-points.
We have also shown that each finite chain of length two or more appears as an initial Tukey structure of a non-p-point.
Furthermore, \cite{DobrinenJML16} and a forthcoming paper obtain uncountable linear orders as initial Tukey structures of non-p-points.
In \cite{Dobrinen/Todorcevic11}, Dobrinen and Todorcevic constructed $2^{\mathfrak{c}}$ many Tukey incomparable Ramsey ultrafilters assuming cov$(\mathcal{M})=\mathfrak{c}$,
showing that this large antichain appears as an initial Tukey structure.
This is in contrast to  other work in
\cite{Milovich08}, \cite{Dobrinen/Todorcevic11}, \cite{Raghavan/Todorcevic12} and \cite{Raghavan/Shelah17} showing that certain structures embed into the Tukey types of ultrafilters.
We would like to know the structure of  downward closed Tukey structures which are as large as possible as a means of gaining information about the exact structure of all Tukey types of ultrafilters.

\begin{problem}\label{prob.1}
Given an ultrafilter $\mathcal{U}$ satisfying some partition property, what is the structure of the Tukey types of all ultrafilters Tukey reducible to $\mathcal{U}$?
\end{problem}

For the examples
analyzed in previous sections,
knowing that the generating partial ordering is essentially a topological Ramsey space aided greatly in solving this problem.
The Ramsey theory available also enabled us to find initial Rudin-Keisler structures and precisely, the structure of the RK classes inside the Tukey types of an initial structure of Tukey types.
Similar questions can be asked for these two foci.

A related but more challenging problem is the following.

\begin{problem}\label{prob.2}
What are the most complex structures initial Tukey structures that can be found?
\end{problem}

If one can find the initial Tukey structure below the maximal Tukey type, then one has completely found the full  Tukey structure of all ultrafilters.
It should be pointed out that
it may be consistent that there is only one Tukey type.  This is what remains of Isbell's Problem in \cite{Isbell65} which is  one of the most important questions on Tukey types of ultrafilters.
Such a model would have to contain no p-points and hence no Ramsey ultrafilters, so it was not the focus of this paper.

As we briefly saw in Subsection \ref{subsec.finfin}, forcing with the $\sigma$-closed forcings $\mathcal{P}(\om^k)/\Fin^{\otimes k}$,  and  more generally
$\mathcal{P}(\om^{\al})/\Fin^{\otimes\al}$ for each countable ordinal $\al$ (see  \cite{DobrinenJML16}),
and for the other examples covered in previous sections,
forcing with some partial ordering modulo a $\sigma$-ideal  produces an ultrafilter which has complete combinatorics, since, for these examples, they are forced by topological Ramsey spaces.
This leads to the following question which we find quite compelling.

\begin{problem}
Given a countable set $X$ and a $\sigma$-closed ideal $\mathcal{I}$ on $X$,
if the forcing $\mathcal{P}(X)/\mathcal{I}$ adds an ultrafilter which satisfies some weak partition properties,
is there some topological Ramsey space $\mathcal{R}$ such that $(\mathcal{R},\le^*)$ is forcing equivalent to  $\mathcal{P}(X)/\mathcal{I}$?
\end{problem}

A related question is the following.

\begin{problem}\label{problem.50}
For which
$\sigma$-closed ideals $\mathcal{I}$ on a countable set $X$, such that
 the forcing $\mathcal{P}(X)/\mathcal{I}$ adds an ultrafilter which
  has complete combinatorics,
is there some topological Ramsey space $\mathcal{R}$ such that $(\mathcal{R},\le^*)$ is forcing equivalent to  $\mathcal{P}(X)/\mathcal{I}$?
\end{problem}

In  \cite{Hrusak/Verner11}, Hru\v{s}ak and Verner proved  that if $\mathcal{I}$ is a tall $F_{\sigma}$ P-ideal, then $\mathcal{P}(\om)/\mathcal{I}$ adds a p-point which has no rapid RK-predessor and which is not Canjar.  Thus, there is no Ramsey ultrafilter RK below this forced ultrafilter, but the Mathias forcing with tails in this ultrafilter does add a dominating real.
It seems unlikely  that such ideals give an affirmative answer to Problem \ref{problem.50}
since all know topological Ramsey spaces have ultrafilters with Ramsey ultrafilters RK below them,
but this remains open.

Lastly, we would like to have a more user-friendly characterization of complete combinatorics for topological Ramsey spaces.
The characterization  of complete combinatorics  given by Di Prisco, Mijares, and Nieto
in Theorem \ref{thm.completecomb}
requires one to understand selectivity of an ultrafilter  in the sense of diagonalizations of certain
dense open sets  with respect to the ultrafilter.
The complete combinatorics of Blass and Laflamme,  on the other hand,
characterize complete combinatorics in terms of Ramsey properties.
We conjecture that a similar characterization can be given for the topological Ramsey space setting.
We say that a filter $\mathcal{U}$ on base set $\mathcal{R}$  is {\em Ramsey}  with respect to $(\mathcal{R},\le,r)$ if and only if
for each $2\le n<\om$,
for each coloring $c:\mathcal{AR}_n\ra 2$, there is an $X\in\mathcal{U}$ such that $c$ has one color on $\mathcal{AR}_n|X$.

\begin{conjecture2}\label{prob.4}
Let $(\mathcal{R},\le,r)$ be a topological Ramsey space, and let $\mathcal{U}$ be a filter on base set  $\mathcal{R}$.
Suppose that  there is a supercompact cardinal in $V$.
If $\mathcal{U}$ is Ramsey, then $\mathcal{U}$ is generic for the forcing $(\mathcal{R},\le^*)$ over the Solovay model $L(\mathbb{R})$.
\end{conjecture2}

It would suffice to prove that $\mathcal{U}$ is selective (in the sense of Definition \ref{defn.selective}) if and only if $\mathcal{U}$ is Ramsey.
This seems likely, as  similar (but not exactly the same) results  were obtained
for the ultrafilters in \cite{Dobrinen/Mijares/Trujillo14} and
for a class of topological Ramsey spaces of trees in \cite{TrujilloThesis}.
Such a representation of complete combinatorics over $L(\mathbb{R})$  for topological Ramsey spaces would be the ideal analogue of the
complete combinatorics of Blass and Laflamme.


\bibliographystyle{amsplain}
\bibliography{references}

\end{document}